\tikzset{->-/.style={decoration={  markings,  mark=at position #1 with
    {\arrow{>}}},postaction={decorate}}}
\tikzset{-<-/.style={decoration={  markings,  mark=at position #1 with
    {\arrow{<}}},postaction={decorate}}}
\newsavebox{\@brx}
\newcommand{\llangle}[1][]{\savebox{\@brx}{\(\m@th{#1\langle}\)}%
  \mathopen{\copy\@brx\kern-0.5\wd\@brx\usebox{\@brx}}}
\newcommand{\rrangle}[1][]{\savebox{\@brx}{\(\m@th{#1\rangle}\)}%
  \mathclose{\copy\@brx\kern-0.5\wd\@brx\usebox{\@brx}}}
\def\e#1\e{\begin{equation}#1\end{equation}}
\def\ea#1\ea{\begin{align}#1\end{align}}
\theoremstyle{plain}% default
\newtheorem{thm}{Theorem}[section]
\newtheorem{lem}[thm]{Lemma}
\newtheorem{prop}[thm]{Proposition}
\newtheorem{cor}[thm]{Corollary}
\theoremstyle{definition}
\newtheorem{dfn}[thm]{Definition}
\newtheorem{ex}[thm]{Example}
\newtheorem{rem}[thm]{Remark}
\newtheorem{conv}[thm]{Convention}
\newcommand{\A}{\mathcal{A}}
\newcommand{\op}{\operatorname}
\newcommand{\C}{\mathbb{C}}
\newcommand{\Z}{\mathbb{Z}}
\renewcommand{\H}{\mathbf{H}}
\newcommand{\im}{\op{im}}
\newcommand{\bracket}[1]{\left(#1\right)}
\newcommand{\pa}{\partial}
\newcommand{\dbar}{\bar\pa}
\DeclareMathOperator{\Res}{Res}
\def\Xint#1{\mathchoice
{\XXint\displaystyle\textstyle{#1}}%
{\XXint\textstyle\scriptstyle{#1}}%
{\XXint\scriptstyle\scriptscriptstyle{#1}}%
{\XXint\scriptscriptstyle\scriptscriptstyle{#1}}%
\!\int}
\def\XXint#1#2#3{{\setbox0=\hbox{$#1{#2#3}{\int}$}
\vcenter{\hbox{$#2#3$}}\kern-.5\wd0}}
\def\dashint{\Xint-}
\numberwithin{equation}{section}
\newcommand{\subjclass}[2][2010]{%
  \let\@oldtitle\@title%
  \gdef\@title{\@oldtitle\footnotetext{#1 \emph{Mathematics Subject Classification.} #2}}%
}
\newcommand{\keywords}[1]{%
  \let\@@oldtitle\@title%
  \gdef\@title{\@@oldtitle\footnotetext{\emph{Key words and phrases.} #1.}}%
}
\let\orig@afterheading\@afterheading
\def\@afterheading{%
   \@afterindenttrue
  \orig@afterheading}
\begin{document}
%%%%%%%%%%%%%%%%%%%%%%%%%%%%%%%%%%%%%%%%%%%%%%%%%%%%%%%%%%%%%%%%%%%%%%%%
%%%%%%%%%%%%%%%%%%%%%%%    Text of paper    %%%%%%%%%%%%%%%%%%%%%%%%%%%%
%%%%%%%%%%%%%%%%%%%%%%%%%%%%%%%%%%%%%%%%%%%%%%%%%%%%%%%%%%%%%%%%%%%%%%%%
%The following title can be changed according to needs.%
\title{\bf Regularized Integrals on Elliptic Curves and  Holomorphic Anomaly Equations}
\author{Si Li and Jie Zhou}
\date{}
%\subjclass{14N35, 11Fxx}
\maketitle

\begin{abstract}
 We derive residue formulas for the regularized integrals (introduced in \cite{Li:2020regularized}) on configuration spaces of elliptic curves.
Based on these formulas, we prove that the
regularized integrals satisfy holomorphic anomaly equations, providing a mathematical formulation of the so-called contact term singularities.
We also discuss residue formulas for the ordered $A$-cycle integrals and establish their relations with those for
 the regularized integrals.
\end{abstract}

\setcounter{tocdepth}{2} \tableofcontents

\section{Introduction}

%The present work is a sequel to our previous work  \cite{Li:2020regularized} in studying regularized integrals on configuration space of Riemann surfaces.

%\subsubsection*{Regularized integrals on Riemann surfaces}

We start by briefly reviewing the notion of regularized integrals introduced in our earlier work  \cite{Li:2020regularized}.
Let $\Sigma$ be a Riemann surface  and $D$ be a reduced effective divisor.
Let $ \A^2(\Sigma),  \A^2(\Sigma,\star D)$ be the spaces of $2$-forms on $\Sigma$ that are smooth everywhere, smooth on $\Sigma- D$ with possibly holomorphic poles along $D$, respectively.
The regularized integral $\dashint_{\Sigma}$ introduced in our earlier work \cite{Li:2020regularized}
extends the usual integral $\int_\Sigma$ for smooth forms
\[
\xymatrix{
\A^2(\Sigma)\ar@{^{(}->}[rr]  \ar[dr]_{\int_\Sigma} && \A^2(\Sigma,\star D)\ar[dl]^{\dashint_\Sigma}\\
&\C&
}
\]
This can be generalized to integrals on the configuration spaces $\mathrm{Conf}_{n}(\Sigma)=\Sigma^{n}-\Delta_{n}$, where $\Delta_{n}$
is the big diagonal, and one has
\[
\dashint_{\Sigma}: \A^{2n}(\Sigma^n,\star \Delta_{n})
\rightarrow \A^{2n-2}(\Sigma^{n-1}, \star \Delta_{n-1})\,.
\]
The iterated regularized integrals then give rise to
\[
\dashint_{\Sigma^n}: \A^{2n}(\Sigma^n, \star \Delta_{n})\rightarrow \C\,.
\]
This notion of regularized integrals satisfies  Stokes theorem,
Fubini-type theorem, etc.,
and works for family versions, among many other nice properties. In the studies of 2d chiral quantum field theories,  it offers a geometric framework for constructing correlation functions of non-local operators. For example, the composition of chiral conformal blocks with regularized integrals leads \cite{Gui-Li2021} to an elliptic chiral analogue of trace map on chiral algebras. \\

%\subsubsection*{Regularized integrals on elliptic curves and modular forms}

Regularized integrals on the elliptic curve $\Sigma=E=\mathbb{C}/(\mathbb{Z}\oplus \mathbb{Z}\tau)$
enjoy particularly interesting properties.
This is mainly due to the fact that
    functions and differential forms on the configuration space $\mathrm{Conf}_{n}(E)$ of elliptic curves
    admit very concrete descriptions via their lifts to the universal cover that are equivariant under the action of the Jacobi group
    $ (\mathbb{Z}\oplus \mathbb{Z}\tau)\rtimes\mathrm{SL}_{2}(\mathbb{Z}) $.

The above notion of regularized integrals is closely related to the so-called ordered $A$-cycle integrals previously studied in the literature
\cite{Dijkgraaf:1995, Roth:2009mirror, Li:2011mi, Boehm:2014tropical, Goujard:2016counting, Oberdieck:2018}.
Unlike the former, the latter is  automatically convergent. To be more precise, let $\Phi$ be a meromorphic function on $E_{[n]}:=E_{n}\times E_{n-1}\times\cdots\times E_{1}$, where $[n]:=(1,2,\cdots, n)$,
with possibly only  poles along the big diagonal.
Let
$A_{i_n}\times A_{i_{n-1}}\times \cdots \times A_{i_{1}}$ be the lift of the corresponding cycle class in $H_{n}(E_{[n]},\mathbb{Z})$ to the cycle on
$\mathbb{C}^{n}$ that is the product of $n$ disjoint line segments connecting $\varepsilon_{i_k}\tau$ to $\varepsilon_{i_k}\tau+1$, with
$0<\varepsilon_{i_n}<\varepsilon_{i_{n-1}}\cdots < \varepsilon_{i_1}<1$.
The ordered $A$-cycle integral is then defined as
\[\int_{A_{\sigma([n])}}\Phi\, dz_{1}\wedge dz_{2}\cdots \wedge dz_{n}=\int_{A_{\sigma(n)}}\cdots\int_{A_{\sigma(2)}}\int_{A_{\sigma(1)}}\Phi\,dz_1dz_2\cdots dz_n
\,,\quad\sigma\in \mathfrak{S}_{n}\,.\]

\begin{conv}\label{conv3}  Fixing a linear coordinate $z$ on the universal cover $\mathbb{C}$ of $E$, we obtain the holomorphic 1-form $dz$
and the volume form $\mathrm{vol}:={\sqrt{-1}\over 2 \mathrm{Im}\,\tau}dz\wedge d\bar{z}$ on $E$. For simplicity, we will sometimes omit them and use short-hand notations such as
$\dashint_{E} f:=\dashint_{E} f\,\mathrm{vol}, \int_{A}f:=\int_{A}f dz$.
Similarly, we will write
$\Res_{z=p}(f)$
for the residue  $\Res_{z=p}(f dz)$  of the corresponding differential $fdz$ at $z=p$, and write $\Res(f)$ for the summation of all local residues.
\end{conv}

When applied to such a function $\Phi$, the regularized integral
\[\dashint_{E_{[n]}}:=\dashint_{E_{n}}\cdots\dashint_{E_{2}}\dashint_{E_{1}}\]
gives rise \cite{Li:2020regularized}   to an almost-holomorphic modular form \cite{Kaneko:1995}. Furthermore, it
 is exactly the modular completion (in the sense of Kaneko-Zagier \cite{Kaneko:1995}) of the average over orderings of the ordered $A$-cycle integrals
\[\int_{A_{\sigma([n])}}:=\int_{A_{\sigma(n)}}\cdots\int_{A_{\sigma(2)}}\int_{A_{\sigma(1)}}\,,\quad\sigma\in \mathfrak{S}_{n}\,.\]
% which are  previously studied in the literature \cite{Dijkgraaf:1995, Roth:2009mirror, Li:2011mi, Boehm:2014tropical, Goujard:2016counting, Oberdieck:2018}.
That is, one has \cite[Theorem 3.4]{Li:2020regularized}
\begin{equation}\label{eqnaveraging}
\lim_{\mathbold{Y}=0}\dashint_{E_{[n]}}\Phi
=
{1\over n!}
 \sum_{\sigma\in \mathfrak{S}_{n}}\int_{A_{\sigma([n])}}\Phi\,,\quad \mathbold{Y}=-{\pi\over \mathrm{im}\,\tau}\,,
\end{equation}
 where $\lim_{\mathbold{Y}=0}$ on the left hand side stands for the holomorphic limit map \cite{Kaneko:1995} on almost-holomorphic modular forms.
This relation leads to an alternative proof  \cite[Theorem 3.9]{Li:2020regularized} of the mixed-weight phenomenon of the
quasi-modularity of
ordered $A$-cycle integrals \cite{Goujard:2016counting, Oberdieck:2018},
and explains both conceptually and combinatorially why averaging the ordered $A$-cycle integrals gives
quasi-modular forms of pure weight  \cite{Oberdieck:2018}.

One interesting consequence is that, based on  standard facts from theory \cite{Kaneko:1995} of quasi-modular and almost-holomorphic modular forms,
any regularization scheme whose leading part produces the averaged ordered $A$-cycle integrals
and which generalizes to family versions must be equivalent to our notion of regularized integrals.\\
%See Section \ref{secellipticanomaly} for further discussions regarding this.

The present work studies further properties of the regularized integrals on elliptic curves and their applications.
The main result  is the following holomorphic anomaly equation.
\begin{thm}[=Theorem \ref{thmHAE}]
Let $\Psi$ be an almost-elliptic function, then one has
\begin{equation}\label{eqnHAE}
\partial_{\mathbold{Y}}\dashint_{E_{[n]}}\Psi
=\dashint_{E_{[n]}} \partial_{\mathbold{Y}}\Psi -\sum_{a,b:\,a< b}
\dashint_{E_{[n]-\{a\}}} \Res_{z_{a}=z_{b}}( (z_{a}-z_{b})\Psi)\,.
\end{equation}
\end{thm}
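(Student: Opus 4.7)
The strategy is to combine the residue formulas for the iterated regularized integral (established earlier in the paper) with the observation that the ordered $A$-cycle integrals, being integrations against purely real cycles, carry no $\mathbf{Y}$-dependence whatsoever. Those residue formulas rewrite $\dashint_{E_{[n]}}\Psi$ as an ordered $A$-cycle integral plus a polynomial expression in $\mathbf{Y}$ whose coefficients are built out of iterated residues of $\Psi$ along the diagonals $\{z_a = z_b\}$. All of the $\mathbf{Y}$-dependence of the regularization is therefore concentrated in this polynomial piece, and $\partial_{\mathbf{Y}}$ of $\dashint_{E_{[n]}}\Psi$ will be controlled by its $\mathbf{Y}$-linear part.

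The plan is to proceed by induction on $n$, using the Fubini-type theorem for regularized integrals to peel off one variable at a time,
\[
\dashint_{E_{[n]}}\Psi \;=\; \dashint_{E_{[n-1]}}\!\dashint_{E_{n}}\Psi\,,
\]
thereby reducing matters to the one-variable key identity
\[
\partial_{\mathbf{Y}}\dashint_{E_n} f \;=\; \dashint_{E_n}\partial_{\mathbf{Y}} f \;-\; \sum_{b<n}\Res_{z_n = z_b}\!\bigl((z_n - z_b)\,f\bigr)\,,
\]
valid for $f$ almost-elliptic in $z_n$ with poles along $z_n = z_b$, $b<n$. Applying $\dashint_{E_{[n-1]}}$ to both sides of this identity, using the symmetry $\Res_{z_n=z_b}((z_n-z_b)f)=\Res_{z_b=z_n}((z_b-z_n)f)$ to relabel each residue contribution as the $(b,n)$ summand in the theorem's $\sum_{a<b}$, and invoking the inductive hypothesis on $\partial_{\mathbf{Y}}\dashint_{E_{[n-1]}}$ (which contributes precisely the pairs $(a,b)$ with $a<b<n$), assembles the full identity \eqref{eqnHAE}. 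The base case $n=1$ is trivial, as the big diagonal is empty and the $n=1$ residue formula admits no $\mathbf{Y}$-linear corrections, so $\partial_{\mathbf{Y}}$ commutes with $\dashint_{E_1}$ on the nose.

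The crux is the one-variable key identity, where one must pinpoint exactly where the $\mathbf{Y}$-dependence of $\dashint_{E_n}$ enters. The conceptual reason for the factor $(z_n - z_b)$ is that a meromorphic $(1,0)$-form on $E$ with a pole at $z = p$ is regularized using a Kronecker-Eisenstein-type propagator whose non-holomorphic part is exactly linear in $\mathbf{Y}$ with coefficient proportional to $z - p$; differentiating in $\mathbf{Y}$ therefore extracts the coefficient of $(z - p)^{0}$ in the Laurent expansion of $(z - p)f$, which is precisely $\Res_{z = p}((z - p)f)$. The main technical obstacle is the iteration: one has to verify that the new $\mathbf{Y}$-dependent pole structures produced after integrating out $z_n$ (through the propagator kernel) do not generate any spurious contact contributions in the subsequent $\dashint_{E_j}$'s beyond those accounted for by the inductive hypothesis. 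A clean way to handle this is to reduce to the case where $\Psi$ is a monomial in $\mathbf{Y}$ times a product of derivatives of Kronecker-Eisenstein kernels, where all Laurent expansions can be tracked term by term, and then to extend by linearity.
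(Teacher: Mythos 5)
Your overall architecture --- a one--variable anomaly identity plus induction on $n$ via the Fubini property of $\dashint$ --- is exactly the paper's, and your account of where the factor $(z_a-z_b)$ comes from (the $\mathbf{Y}$--linear part of the kernel $\widehat{Z}$, whose $\bar z$ piece dies under the residue) is the right mechanism. But two steps do not go through as written. First, because you peel off the \emph{largest} index $z_n$, your one--variable identity produces $\Res_{z_n=z_b}\bigl((z_n-z_b)\Psi\bigr)$, and you convert this into the theorem's $\Res_{z_b=z_n}\bigl((z_b-z_n)\Psi\bigr)$ by asserting the two residues coincide. They do not: for a pole of order $\geq 2$ along the diagonal they differ by transverse derivatives of the deeper Laurent coefficients. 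For instance, with $\Psi=\wp(z_2-z_3)\,\wp'(z_1-z_2)$ one has $\Res_{z_1=z_2}\bigl((z_1-z_2)\Psi\bigr)=0$ while $\Res_{z_2=z_1}\bigl((z_2-z_1)\Psi\bigr)=2\wp'(z_1-z_3)$. The discrepancy is $\partial$--exact and does vanish after the remaining regularized integrations (via $\dashint_\Sigma\partial\beta=0$), but that is an extra argument you would have to supply; the paper avoids the issue entirely by peeling off the \emph{smallest} index $z_1$, so that its one--variable lemma directly yields the pairs $(1,r)$ with the orientation demanded by the sum $\sum_{a<b}$.

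Second, and more seriously, after invoking the inductive hypothesis on $\dashint_{E_{[n-1]}}$ applied to $\dashint_{E_n}\Psi$ you must interchange the resulting residue operators with the inner integral, i.e.\ prove $\dashint_{E_{[n-1]-\{a\}}}\Res_{z_a=z_b}\bigl(z_{ab}\,\dashint_{E_n}\Psi\bigr)=\dashint_{E_{[n]-\{a\}}}\Res_{z_a=z_b}\bigl(z_{ab}\Psi\bigr)$ for $a,b\neq n$. This is the technical heart of the proof: the inner integral converts $\Psi$ into a genuinely non--meromorphic almost--elliptic function, so the interchange is not a formal Fubini statement. The paper establishes it by combining the residue formula $\dashint_{E}=\Res\circ\,\partial_{\widehat{Z}}^{-1}$ with the commutator identities for residue operators ($R^{(a)}_{c}R^{(b)}_{c}-R^{(b)}_{c}R^{(a)}_{c}=R^{(b)}_{c}R^{(a)}_{b}$, and vanishing of commutators for disjoint index pairs). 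Your proposal flags this as ``the main technical obstacle,'' but the proposed remedy --- reduce to monomials and track Laurent expansions --- is not yet an argument, and without the commutator lemma (or an equivalent) the induction does not close. Finally, the $n=1$ case is not ``trivial because the big diagonal is empty'': elements of $\widehat{\mathcal{J}}_1$ still have poles at the lattice points, and the base case actually needed for the induction is $n=2$, which follows from the one--variable lemma together with the triviality of the last integration.
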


Combining this with \eqref{eqnaveraging}, we obtain the following closed-form expression for the
regularized integration on elliptic functions.

\begin{thm}[=Theorem \ref{thmHAEBCOV}]
As operators on elliptic functions, the following holds
\begin{equation}\label{eqnHAEintegratedintro}
\dashint_{E_{[n]}}=-{1\over 2}\mathbold{Y} \dashint S
-({1\over 2})^{2}{1\over 2!}\mathbold{Y}^{2} \dashint SS
-({1\over 2})^3{1\over 3! }\mathbold{Y}^{3} \dashint SSS+\cdots
+{1\over n!}\sum_{\sigma\in\mathfrak{S}_{n}}\int_{A_{\sigma([n])}}\,,
\end{equation}
where
\[
S(-)=\sum_{a}\sum_{ r:\, r\neq a}\Res_{z_{a}=z_{r}}(z_{ar}-)\,,\quad z_{ar}=z_{a}-z_{r}\,.\]
\end{thm}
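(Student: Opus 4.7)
My plan is to reduce the identity to a first-order ordinary differential equation in $\mathbold{Y}$ satisfied by both sides with matching value at $\mathbold{Y}=0$.

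The first step is to specialize Theorem \ref{thmHAE} to elliptic $\Psi$: since $\partial_{\mathbold{Y}}\Psi = 0$, the anomaly reduces to the pure diagonal residue sum. I would then identify this sum with $S$ by a symmetrization. A change of variable $w := z_a - z_b$ (together with the sign flip between $dz_a$ and $dz_b$) shows that $\Res_{z_a=z_b}\bigl((z_a-z_b)\Psi\bigr)$ and $\Res_{z_b=z_a}\bigl((z_b-z_a)\Psi\bigr)$ give equal values after $\dashint_{E_{[n-1]}}$, so
\[
\dashint_{E_{[n-1]}}S(\Psi) = 2\sum_{a<b}\dashint_{E_{[n]-\{a\}}}\Res_{z_a=z_b}\bigl((z_a-z_b)\Psi\bigr),
\]
and Theorem \ref{thmHAE} collapses to the clean relation $\partial_{\mathbold{Y}}\dashint_{E_{[n]}}\Psi = -\tfrac{1}{2}\dashint_{E_{[n-1]}}S(\Psi)$. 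Residues of elliptic functions are again elliptic in the surviving variables, so every iterate $S^k(\Psi)$ is elliptic on $E_{[n-k]}$ and the same relation holds at every level.

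The second step is to verify that the proposed right-hand side satisfies the same ODE. Writing it as
\[
H(\mathbold{Y}) = \sum_{k\geq 1}c_k\,\mathbold{Y}^k g_k(\mathbold{Y}) + \frac{1}{n!}\sum_{\sigma\in\mathfrak{S}_n}\int_{A_{\sigma([n])}}\Psi,\quad c_k = -\frac{1}{2^k k!},\quad g_k(\mathbold{Y}) := \dashint_{E_{[n-k]}}S^k(\Psi),
\]
and differentiating termwise using $g_k'(\mathbold{Y}) = -\tfrac{1}{2}g_{k+1}(\mathbold{Y})$, one obtains
\[
H'(\mathbold{Y}) = c_1 g_1(\mathbold{Y}) + \sum_{k\geq 2}\bigl(k c_k - \tfrac{1}{2}c_{k-1}\bigr)\mathbold{Y}^{k-1}g_k(\mathbold{Y}).
\]
The arithmetic identity $k c_k = \tfrac{1}{2}c_{k-1}$ (immediate from the explicit form of $c_k$) kills every $k\geq 2$ contribution, leaving $H'(\mathbold{Y}) = c_1 g_1(\mathbold{Y}) = -\tfrac{1}{2}\dashint_{E_{[n-1]}}S(\Psi)$, which matches the derivative of the LHS. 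For the initial condition I would invoke that each $g_k$ is polynomial in $\mathbold{Y}$, by the almost-holomorphic modularity of regularized integrals proved in \cite{Li:2020regularized}. Every $\mathbold{Y}^k g_k(\mathbold{Y})$ then vanishes at $\mathbold{Y}=0$, so $H(0) = \tfrac{1}{n!}\sum_\sigma\int_{A_{\sigma([n])}}\Psi = \dashint_{E_{[n]}}\Psi\bigl|_{\mathbold{Y}=0}$ by the averaging relation \eqref{eqnaveraging}. Two polynomials in $\mathbold{Y}$ with the same derivative and the same value at $0$ must coincide, completing the proof.

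The main technical subtlety I expect is the initial symmetrization: tracking carefully which variable survives each residue $\Res_{z_a=z_b}$ versus $\Res_{z_b=z_a}$, and confirming that the two apparently distinct expressions contribute equally to the integral under $\dashint_{E_{[n-1]}}$. Everything after this reduction, including the algebraic identity $k c_k = \tfrac{1}{2}c_{k-1}$ and the polynomial uniqueness argument, is a routine verification.
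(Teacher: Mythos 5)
Your proposal is correct and follows essentially the same route as the paper: the paper's proof of Theorem \ref{thmHAEBCOV} consists precisely of ``integrating'' the operator identity \eqref{eqnHAEabbreviation} (Theorem \ref{thmHAE} symmetrized over ordered pairs, which is your $\partial_{\mathbold{Y}}\dashint_{E_{[n]}}\Psi=-\tfrac12\dashint S(\Psi)$) using Lemma \ref{lemSpreserve} that $S$ preserves ellipticity and fixing the constant at $\mathbold{Y}=0$ via \eqref{eqnaveraging}. Your telescoping computation with $kc_k=\tfrac12 c_{k-1}$ simply makes explicit what the paper's one-line ``integrating with respect to $\mathbold{Y}$'' means (note the $\dashint S^k$ terms themselves depend on $\mathbold{Y}$, so this is the right way to read it), and the symmetrization subtlety you flag--that $\Res_{z_a=z_b}((z_a-z_b)\Psi)$ and $\Res_{z_b=z_a}((z_b-z_a)\Psi)$ agree only after integration, their pointwise difference being total derivatives killed by $\dashint\partial\beta=0$--is the same step the paper absorbs without comment into the passage from Theorem \ref{thmHAE} to \eqref{eqnHAEreformulated}.
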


The expression \eqref{eqnHAEintegratedintro}
resembles the form
of holomorphic anomaly  in the physics literature
 \cite{Bershadsky:1993cx, Dijkgraaf:1997chiral} and offers a mathematical formulation of the so-called contact term singularities
 that are systematically studied  by Djikgraaf \cite{Dijkgraaf:1997chiral} from the physics perspective
(see also  \cite{rudd1994string, Douglas:1995conformal}).

\subsection*{Structure of the paper}

In Section 2 we derive residue formulas for regularized integrals.
In Section 3
we prove that regularized integrals satisfy holomorphic anomaly equations and discuss their various reformulations and interpretations.
In Section 4 we derive residue formulas for ordered $A$-cycle integrals, and discuss some of their consequences by comparing them to
those for the regularized integrals.
In Appendix \ref{appendixcombinatorial} we discuss some combinatorial properties of residue formulas and their applications.

\subsection*{Acknowledgment}

The authors would like to thank Robbert Dijkgraaf, Zhengping Gui,  Xinxing Tang and Zijun Zhou for helpful communications and discussions.
Both authors are  supported by
the  national key research and development
program of China (NO. 2020YFA0713000).
J.~Z. is also partially supported by
the Young overseas high-level talents introduction plan of China.

\section{Residue formulas for regularized integrals on elliptic curves}
\label{secresidueformulasforregularizedintegrals}

\subsection{Regularized integrals}

We briefly recall the notion of regularized integrals on the configuration spaces of Riemann surfaces as introduced in \cite{Li:2020regularized}.

 Let $\Sigma$ be a compact Riemann surface, possibly with  boundary $\pa\Sigma$. Let $D\subseteq \Sigma$ be a finite subset of points which do not meet $\pa \Sigma$.  Let $\omega$ be a $2$-form on $\Sigma$ which is smooth away from $D\subseteq\Sigma$ but may admit holomorphic poles of arbitrary order along $D$. To be more precise, let $z$ be a local holomorphic coordinate around a point $p\in D$ located at $z=0$. Then $\omega$ is locally expressed as $\omega= {\alpha\over z^n}$, where $\alpha$ is a smooth 2-form and $n$ is some integer.

 Let us decompose $\omega$ as a sum (such a decomposition always exists and is not unique)
\[
\omega=\alpha+\pa \beta\,,
\]
where $\alpha$ is a $2$-form with at most logarithmic pole along $D$, $\beta$ is a $(0,1)$-form with arbitrary order of poles along $D$, and $\pa$ is the holomorphic de Rham differential. The regularized integral \cite{Li:2020regularized} is defined to be
\[
\dashint_{\Sigma}\omega:= \int_{\Sigma}\alpha+ \int_{\pa \Sigma}\beta\,,
\]
where the right hand side is absolutely integrable and is independent of the choice of $\alpha, \beta$.

Such defined regularized integral has a few important properties  \cite{Li:2020regularized}.

\begin{prop}\label{prop-regularized-integral} Let $\Sigma$ be a compact Riemann surface without boundary, $D\subset \Sigma$ be a finite subset of points.  Let $\xi$ be a $(1,0)$-form and $\beta$ be a $(0,1)$-form, which are smooth away from $D$ and may admit holomorphic poles along $D$. Then
\[
\dashint_{\Sigma}\dbar \xi=-2\pi i \Res_\Sigma(\xi), \quad \dashint_{\Sigma}\pa \beta=0\,.
\]
Here $\Res_\Sigma(\xi)$ is the sum of local residues  defined by the limit  (note that $\xi$ may not be meromorphic)\footnote{The limit $\lim\limits_{\varepsilon\to 0}$ exists and does not depend on the choice of local  coordinate $z$  \cite[Definition 2.11]{Li:2020regularized}. }
\[
\Res_\Sigma(\xi)=\sum_{p\in D}\Res_{p}(\xi), \quad{where}\quad \Res_{p}(\xi)={1\over 2\pi i}\lim_{\varepsilon\to 0}\oint_{|z|=\varepsilon}\xi\,.
\]
Here $z$ is a local holomorphic coordinate around $p$ with $z(p)=0$.
\end{prop}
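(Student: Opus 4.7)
The plan is to reduce both identities to the defining decomposition $\omega = \alpha + \pa\beta$ of $\dashint_\Sigma$, and, in the first case, to the classical Stokes theorem applied to $\Sigma$ with small disks excised around each $p\in D$.

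The identity $\dashint_\Sigma \pa\beta = 0$ is nearly tautological. Writing $\pa\beta = 0 + \pa\beta$ furnishes a valid decomposition of the required form: $\alpha = 0$ trivially has at most logarithmic poles along $D$, and $\beta$ already meets the $(0,1)$-plus-arbitrary-poles hypothesis by assumption. Since $\pa\Sigma = \emptyset$, both $\int_\Sigma 0$ and $\int_{\pa\Sigma}\beta$ vanish, so $\dashint_\Sigma \pa\beta = 0$ by the well-definedness of the construction.

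For the first identity I would proceed in two steps. Step one: show that, even though $\xi$ may have arbitrary-order holomorphic poles along $D$, the $(1,1)$-form $\dbar\xi$ is in fact globally smooth on $\Sigma$. This is a purely local statement near each $p\in D$: writing $\xi = \mu + \nu$ with $\mu$ the meromorphic polar part and $\nu$ a smooth $(1,0)$-form, the holomorphicity of $\mu$ gives $\dbar\mu = 0$ on $\Sigma\setminus\{p\}$, while $\dbar\nu$ extends smoothly across $p$. Consequently the trivial decomposition $\dbar\xi = \dbar\xi + \pa 0$ is admissible and $\dashint_\Sigma\dbar\xi = \int_\Sigma \dbar\xi$ equals the ordinary integral of a smooth form. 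Step two: compute this integral by excising small disks $B_\varepsilon(p_i)$ around each $p_i \in D$. On $\Sigma_\varepsilon = \Sigma \setminus \bigcup_i B_\varepsilon(p_i)$, classical Stokes gives
$$\int_{\Sigma_\varepsilon}\dbar\xi = \int_{\Sigma_\varepsilon} d\xi = -\sum_i \oint_{|z-p_i|=\varepsilon}\xi,$$
using $\pa\xi = 0$ for bidegree reasons on a Riemann surface and that the circles are oriented oppositely as components of $\pa\Sigma_\varepsilon$. As $\varepsilon\to 0$, the left-hand side tends to $\int_\Sigma\dbar\xi$ by smoothness and dominated convergence, while the right-hand side tends to $-2\pi i\,\Res_\Sigma(\xi)$ by the limit definition of $\Res_{p_i}$ supplied in the statement.

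The only non-formal ingredient is Step one, which rests crucially on the hypothesis that the poles of $\xi$ are holomorphic, so that $\dbar$ annihilates them locally off $D$. Once this is granted, everything else is bookkeeping with orientations and a routine passage to the limit along shrinking disks.
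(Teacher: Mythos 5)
Your treatment of the second identity is correct: $\pa\beta=0+\pa\beta$ is an admissible decomposition, and with $\pa\Sigma=\emptyset$ the definition gives $0$ immediately. (The paper does not reprove this proposition; it is quoted from \cite{Li:2020regularized}, so I am judging your argument on its own.)

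The first identity, however, has a genuine gap at Step one. In this paper ``holomorphic poles'' does \emph{not} mean that the singular part of $\xi$ is meromorphic; it means that near $p$ one has $\xi=\eta/z^{n}$ with $\eta$ a \emph{smooth} $(1,0)$-form, i.e.\ $\xi=g(z,\bar z)z^{-n}dz$ with $g$ smooth (see the paper's own gloss for $2$-forms). Such a $\xi$ admits no decomposition into a meromorphic polar part plus a smooth form, and $\dbar\xi=(\pa_{\bar z}g)z^{-n}\,d\bar z\wedge dz$ generally still has a pole of order $n$: for $\xi=\bar z z^{-2}dz$ one gets $\dbar\xi=z^{-2}d\bar z\wedge dz$, which is neither smooth nor absolutely integrable. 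Hence the trivial decomposition $\dbar\xi=\dbar\xi+\pa 0$ is \emph{not} admissible ($\dbar\xi$ does not have at most logarithmic poles), and the appeal to dominated convergence collapses. Note also that under your reading the proposition would be vacuous: a genuinely meromorphic $1$-form on a closed Riemann surface has vanishing total residue, so both sides would always be $0$; the whole point — and the way the proposition is used in Lemma \ref{lemregularizedintegralintermsofresidue}, where $\xi=\Psi_k\widehat{Z}^{k+1}dz$ involves $\bar z$ through $\mathbold{A}$ — is that $\xi$ is not meromorphic and $\Res_\Sigma(\xi)\neq 0$ in general. Your Step two (Stokes on $\Sigma_\varepsilon$, with the sign coming from the clockwise orientation of the excised circles) is exactly the right computation, but to feed it you must first justify $\dashint_\Sigma\dbar\xi=\lim_{\varepsilon\to0}\int_{\Sigma_\varepsilon}\dbar\xi$. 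This is the nontrivial content: either invoke the Cauchy principal value description of the regularized integral (\cite[Theorem 2.9]{Li:2020regularized}), or produce an admissible decomposition $\dbar\xi=\alpha+\pa\beta$ by hand, e.g.\ by iterating the local integration by parts $\dfrac{u}{z^{n}}=\dfrac{1}{n-1}\Bigl(\dfrac{\pa_z u}{z^{n-1}}-\pa_z\dfrac{u}{z^{n-1}}\Bigr)$ to lower the pole order of the non-$\pa$-exact part to $1$, glue with a partition of unity, and then apply your excision argument to the logarithmic piece $\alpha$ (which \emph{is} absolutely integrable).
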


\begin{rem} Similar results hold when $\Sigma$ has a boundary  \cite[Theorem 2.14]{Li:2020regularized}.

\end{rem}

Assume from now on that $\Sigma$ has no boundary. The regularized integrals are extended to the configuration spaces of $\Sigma$. For $n\geq 2$, let $\Sigma^n$ be the $n$-th Cartesian product of $\Sigma$.  Let
$
\Delta_{ij}:= \{(p_1,\cdots, p_n)\in \Sigma^n| p_i=p_j\}
$
and $\Delta_{n}$ be the collection (called the \emph{big diagonal}) of all such diagonal divisors
\[
\Delta_{n}= \bigcup_{1\leq i\neq j\leq n} \Delta_{ij}\,.
\]
Let $\Omega$ be a $2n$-form on $\Sigma^n$ which is smooth away from $\Delta_{n}$ but may admit holomorphic poles of arbitrary order along $\Delta_{n}$. Then we can similarly define the regularized integral $
\dashint_{\Sigma^n}\Omega
$ via a decomposition of $\Omega$ into  logarithmic term and $\pa$-exact term \cite[Definition 2.38]{Li:2020regularized}. It  can be shown to be equal to the $n$-times iterated regularized integral on $\Sigma$
\[
\dashint_{\Sigma^n}\Omega= \dashint_\Sigma \dashint_{\Sigma}\cdots \dashint_{\Sigma}\Omega\,.
\]
In particular, this implies a Fubini-type theorem for regularized integrals \cite[Corollary 2.39]{Li:2020regularized}: the value of $\dashint_\Sigma \dashint_{\Sigma}\cdots \dashint_{\Sigma}\Omega$ does not depend on the choice of the ordering of regularized integrals on the factors of $\Sigma^{n}$.  This property will play a fundamental role in this paper.

\subsection{Preliminaries on functions on configuration spaces of elliptic curves}
\label{secpreminaries}

For any  point $\tau$ on the upper half-plane $\H$, let
\begin{equation*}
 E_{\tau}=\C/\Lambda_{\tau}\,,\quad \Lambda_{\tau}:=\Z\oplus \Z \tau
 \end{equation*}
 be the corresponding elliptic curve.
We fixed once and for all a
 linear holomorphic coordinate $z$ on the universal cover $\C$, and a volume form $\mathrm{vol}$ that
satisfies
 \begin{equation*}
 \int_{E_{\tau}}\mathrm{vol}=1\,,\quad
 \mathrm{vol}:= { \sqrt{-1} \over 2 \im \tau}dz\wedge d\bar z\,.
 \end{equation*}

 Throughout this work,
 constructions on the elliptic curve $E_{\tau}$ (and on the corresponding relative version
 $\mathcal{E}\rightarrow \H$) will be identified with their lifts
 to the universal cover.
 For example, meromorphic functions on $E_{\tau}$
 are identified with those on $\mathbb{C}$
 that are periodic under the translation action by $\Lambda_{\tau}$.
To perform integrations, we fix a basis  $\{A,B\}$ for $H_{1}(E_{\tau},\mathbb{Z})$. On the universal cover $\mathbb{C}\rightarrow E_{\tau}$, $A$ is represented by the segment $[\tau,\tau+1]$ and $B$ by the segment $[1,1+\tau]$. Such $A,B$ will be called the \emph{canonical representatives}.

\begin{figure}[h]\centering
	\begin{tikzpicture}[scale=1]

\draw[cyan!23,fill=cyan!23](0,0)to(4,0)to(5,3)to(1,3)to(0,0);
\draw (0,0) node [below left] {$0$} to (4,0) node [below] {$1$} to (5,3) node [above right] {$1+\tau$} to (1,3) node [above left] {$\tau$} to (0,0);

\draw[ultra thick,blue,->-=.5,>=stealth](1,3)to(5,3);

\node [above] at (3,3) {$A$};

\draw[ultra thick,red,->-=.5,>=stealth](4,0)to(5,3);

\node [right] at (4.5,1.5) {$B$};

	\end{tikzpicture}
	%\caption{Canonical representatives on the fundamental domain.\label{figurecanonicalrepresentatives.}
\end{figure}

%%%
\iffalse
\begin{conv}\label{conv3}  Using the linear coordinate $z$ that gives rise to the 1-form $dz$,
% we also fix the 1-form $dz$ on $E_{\tau}$ and on its universal cover $\C$. For simplicity,
we will sometimes write
$
\Res_{z=p}(f)
$
for the residue  $\Res_{z=p}(f dz)$  of the corresponding differential $fdz$ at $z=p$, and write $\Res(f)$ for the summation of all local residues.
\end{conv}
\fi

Let $E_{k}(\tau),k\geq 2$ be the usual Eisenstein series.
It is a standard fact that
\begin{equation}\label{eqndfinitionofY}
\widehat{E}_{2}:=E_{2}+{3\over \pi^2}\mathbold{Y}\,,\quad \boxed{ \mathbold{Y}:=-{\pi\over \mathrm{im}\,\tau}}\,
\end{equation}
 transforms as a modular form of weight two, while $E_{2}$ does not so.
 The former is called an almost-holomorphic modular form while the latter quasi-modular form \cite{Kaneko:1995}.
 Furthermore, the rings of quasi-modular forms
$\mathbb{C}[E_{4}, E_{6}, E_{2}]$ and almost-holomorphic modular forms $\mathbb{C}[E_{4}, E_{6}, \widehat{E}_{2}]\subseteq \mathbb{C}[E_{4}, E_{6}, E_{2}]\otimes \mathbb{C}[\mathbold{Y}] $
are isomorphic \cite{Kaneko:1995} via the so-called \emph{holomorphic limit} map
\begin{equation}\label{eqnhollimitmap}
\lim_{\mathbold{Y}=0}:~ \mathbb{C}[E_{4}, E_{6}][ \widehat{E}_{2}]\rightarrow  \mathbb{C}[E_{4}, E_{6}][ E_{2}]\,.
\end{equation}
We furthermore
introduce the following functions
\begin{eqnarray}\label{eqnWeierstrassellipticfunction}
&&{Z}(z):=\zeta(z)-{\pi^2\over 3}E_{2}(\tau)z\,,\quad
%\,,\nonumber\\
%&&
\widehat{Z}(z,\bar{z}):=\zeta(z)-{\pi^2\over 3}E_{2}(\tau)z+\mathbold{A}\,,\quad  \boxed{\mathbold{A}:=-{\pi\over \mathrm{im}\,\tau} (\overline{z}-z)}\,\nonumber\\
&&\widehat{P}(z):=-\partial_{z}\widehat{Z}=\wp(z)+{\pi^2\over 3}E_{2}(\tau)+\mathbold{Y}
%:=\wp(z)+\widehat{\eta}_{1}
\,,
\end{eqnarray}
where $\wp$, $\zeta$ are the Weierstrass elliptic functions given by
\begin{eqnarray*}
\wp(z)&=&{1\over z^2}+\sum_{(m,n)\in \mathbb{Z}\oplus \mathbb{Z}\setminus \{ (0,0)\}} \left( {1\over (z-(m\tau+n))^2}-{1\over (m\tau+n)^2}\right)\,,\\
\zeta(z)&=&{1\over z}+\sum_{(m,n)\in \mathbb{Z}\oplus \mathbb{Z}\setminus \{ (0,0)\}} \left( {1\over z-(m\tau+n)}-{1\over (m\tau+n)}+{z\over (m\tau+n)^2}\right)\,.
\end{eqnarray*}

Standard facts (see e.g., \cite{Silverman:2009}) from elliptic function theory tell that
\begin{equation}\label{eqnZtransform}
Z(z+1)=Z(z)\,,\quad Z(z+\tau)=Z(z)-2\pi i\,.
\end{equation}
Of crucial importance is the following addition formula for the Weierstrass $\zeta$-function
\begin{equation}\label{eqnadditionformula}
\zeta(z+w)=\zeta(z)+\zeta(w)+{1\over 2} {\wp'(z)-\wp'(w)\over \wp(z)-\wp(w)}\,,
\end{equation}
where $\wp'(z):=\partial_{z}\wp(z)$.
It implies that \begin{equation}\label{eqnapplyingadditionformula}
f(z_{i}-z_{k})=f(z_{i}-z_{j})+f(z_{j}-z_{k})+{1\over 2} {\wp'(z_{i}-z_{j})-\wp'(z_{j}-z_{k})\over \wp(z_{i}-z_{j})-\wp(z_{j}-z_{k})}\,\quad
\mathrm{for}~f=Z~\mathrm{or}~\widehat{Z}\,,
%\in f(z_{i}-z_{j})+f(z_{j}-z_{\ell})+\mathcal{J}_{n}\,,
\end{equation}
for any triple of distinct values $(i,j,k)$.

\begin{rem}\label{remarkGreen}

The Green's function for the flat metric on the elliptic curve is given by
\begin{equation*}\label{eqnGreenfunction}
G(z,\bar{z})=\ln\left( e^{-2\pi {( \mathrm{im}\,z)^{2}\over \mathrm{im}\,\tau}} |{\vartheta_{({1\over 2}, {1\over 2})}(z) \over \eta(\tau)^{3}}|^{2}
\right)\,,\quad
\end{equation*}
where $\vartheta_{({1\over 2}, {1\over 2})}(z)$ is the Jacobi theta function with characteristic $({1\over 2},{1\over 2})$ and $\eta(\tau)$ is the Dedekind eta-function.
Then one can check that
\begin{equation*}\label{eqnpropagatorfunction}
\widehat{Z}=\partial_{z}G\,,
\quad
\widehat{P}=-\partial_{z}\widehat{Z}=-\partial_{z}^2 G\,.
\end{equation*}
In particular, the function
$\widehat{Z}$
is the Cauchy kernel for $\bar{\partial}$:
as distributions one has
\begin{equation*}\label{eqndistribution}
\bar{\partial}(\widehat{Z}dz)=\bar{\partial}\partial G=\mathrm{vol}-\delta\,,
\end{equation*}
where $\delta$ is the Dirac delta-distribution supported at the origin $z=0$ of the elliptic curve.
%%%
\iffalse
Indeed, for any smooth function $f$ on the elliptic curve, by  the identity \eqref{eqnvolformintermsofZhat},
the relation \cite[Theorem 2.9]{Li:2020regularized} between the regularized integral $\dashint_{E}$ and the
Cauchy principal value $\mathrm{P.V.}\int_{E}$ and Stokes theorem, one has
\[
\langle \widehat{Z}dz, \bar{\partial }f  \rangle_{E}:=\dashint_{E} \widehat{Z}dz\wedge \bar{\partial }f
=\mathrm{P.V.}\int_{E} \widehat{Z}dz\wedge \bar{\partial }f
=f(0)-\int_{E} f \,\mathrm{vol}
\,.
\]
This can be interpreted as
\[ f(0)-\int_{E} f \,\mathrm{vol}=-\langle  \bar{\partial }( \widehat{Z}dz),f  \rangle_{E}\,,\]
where $\bar{\partial }( \widehat{Z}dz)$ is regarded as the distribution  in \eqref{eqndistribution}.
Formally, Lemma \ref{lemregularizedintegralintermsofresidue}
reads that for an almost-elliptic function $f$
\begin{equation}\label{eqnregularizedintegralintermsofresidue}
-\langle  \bar{\partial }( \widehat{Z}dz),f  \rangle_{E}=\dashint_{E} f\,\mathrm{vol}
=\mathrm{Res} \circ \partial_{\hat{Z}}^{-1}f\,,
\end{equation}
here $\partial_{\widehat{Z}}^{-1}$  indicates the anti-derivative with respect to $\widehat{Z}$.
\fi
%%%%
\end{rem}

For $n\geq 2$,
let $D_{n}\subseteq \mathbb{C}^{n}\times \H$ be the lift of the big diagonal, that is,
\[
D_{n}=\{(z_{1},\cdots ,z_{n},\tau)~|~\exists\, 1\leq i\neq j\leq n,\,\mathrm{such~ that}~ z_{i}-z_{j}\in \Lambda_{\tau}\}\,.
\]
Let $\mathcal{F}_{n}$ be the ring of functions on $\mathbb{C}^{n}\times \H$ that are smooth everywhere, except with possibly holomorphic
poles along $D_n$.
We are mostly interested in
   the following subspaces of $\mathcal{F}_{n}$.

   \begin{dfn}\label{dfnalmostelliptic}
For $n\geq 2$, define the subrings of $\mathcal{F}_{n}$ of
elliptic functions $  {\mathcal{J}}_{n}$, quasi-elliptic functions $  \widetilde{\mathcal{J}}_{n}$, almost-elliptic functions  $  \widehat{\mathcal{J}}_{n}$ to be
   \[
    {\mathcal{J}}_{n}:=\mathcal{F}_{n}\cap \mathbb{C}(\wp_{ij}, \wp'_{ij}, E_{4}, E_{6})\,,\quad
     \widetilde{\mathcal{J}}_{n}:= {\mathcal{J}}_{n}\,[Z_{ij}, E_{2}]\,,\quad
      \widehat{\mathcal{J}}_{n}:= {\mathcal{J}}_{n} \,[\widehat{Z}_{ij}, \widehat{E}_{2}]   \,,
    \]
 with the relations\footnote{Strictly speaking, there are further relations among the generators in the  ring   ${\mathcal{J}}_{n}$ such as the Weierstrass equations. }
  \eqref{eqnapplyingadditionformula} inherited from $\mathcal{F}_{n}$.
Hereafter $f_{ij}=f(z_{i}-z_{j})$, and
 by $R[f_{ij}],R(f_{ij})$ we mean the ring, fractional field generated by $f_{ij},1\leq  i\neq j\leq n$ over  $R$, respectively.

 For the $n=1$ case, we use the following convention
  \[
 \mathcal{J}_{1}=\mathbb{C}(\wp, \wp', E_{4}, E_{6})\,,\quad
  \widetilde{\mathcal{J}}_{1}= {\mathcal{J}}_{1}\,[Z, E_{2}]\,,\quad
   \widehat{\mathcal{J}}_{1}= {\mathcal{J}}_{1}\,[\widehat{Z}, \widehat{E}_{2}]\,.
 \]
 We put a weight grading $w$ on the above rings by the following assignment on the generators
 \[
 w(E_{k})=k\,,\quad
 w(\wp)=2\,,\quad
 w(\wp')=3\,,\quad
 w(Z)=w(\widehat{Z})=1\,.
 \]
 For later use we also put the convention $ w(z)=-1=-w(\partial_{z})$.
With this weight grading understood,
a notation such as $\mathcal{J}_{n,k}$ stands for the subset of $\mathcal{J}_{n}$ consisting of the weight-$k$ elements.
\end{dfn}

We shall frequently make use of the following relation without explicit mentioning
\[
   \widehat{\mathcal{J}}_{n}\subseteq  \widetilde{\mathcal{J}}_{n}\otimes \mathbb{C}[\mathbold{A}_{ij}, \mathbold{Y}]\,.
\]
That is, we
regard elements in $\widehat{\mathcal{J}}_{n}$
as polynomials in the generators $\mathbold{A}_{ij}, \mathbold{Y}$
with coefficients in $\widetilde{\mathcal{J}}_{n}$, where the generators $\mathbold{A}_{ij}$ are
 algebraically independent of $\mathbold{Y}$.

\begin{rem}
Functions in the graded rings $ {\mathcal{J}}_{n},     \widetilde{\mathcal{J}}_{n} ,    \widehat{\mathcal{J}}_{n}$
are  variants of Jacobi forms of index zero.
The weight grading $w$ introduced above  corresponds to the modular weight.
See \cite{Eichler:1985,  Libgober:2009, Dabholkar:2012, Goujard:2016counting}
for details.
\end{rem}

\subsection{Residue formulas for regularized integrals on elliptic curves}\label{secresidueformulasregularizedintegrals}

We next derive formulas that express regularized integrals on configuration spaces of $E$ in terms of residues.

\begin{lem}\label{lemregularizedintegralintermsofresidue}
Let $\Psi=\sum_k \Psi_{k}\widehat{Z}^{k}, \Psi_k\in {\mathcal{J}}_1$ be an almost-elliptic function. Then one has
\[\dashint_{E}\Psi\,\mathrm{vol}
=\sum_{k}{1\over k+1}\mathrm{Res} (\Psi_{k} \widehat{Z}^{k+1} dz )\in \mathbb{C}(E_{4},E_{6})[\widehat{E}_{2}]\,.
\]
Here $\Res$ is understood as in Proposition \ref{prop-regularized-integral}.
%In particular, for an elliptic function $\Phi\in \mathcal{J}_{1}$,
%one has
%\[
%\dashint_E \Phi \, \mathrm{vol}= \Res (\Phi \widehat{Z}dz)\,.
%\]
%Simply putting, the latter is $\dashint_E \Phi =\Res (\Phi \widehat{Z})$ via Convention \ref{conv3}.
\end{lem}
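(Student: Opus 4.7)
The plan is to rewrite $\Psi_k \widehat{Z}^k\,\mathrm{vol}$ as a $\bar{\partial}$-exact form on the smooth locus $E\setminus\{0\}$ and then invoke Proposition \ref{prop-regularized-integral}, which states $\dashint_E \bar{\partial}\xi = -2\pi i\,\Res_E(\xi)$. Since $\Psi = \sum_k \Psi_k \widehat{Z}^k$ is a finite sum (it is a polynomial in $\widehat{Z}$ with meromorphic coefficients), linearity of $\dashint_E$ reduces the statement to the single-term case $\Psi_k\widehat{Z}^k\,\mathrm{vol}$.

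The central computational input is the identity $\partial_{\bar{z}}\widehat{Z} = \mathbold{Y}$: the piece $Z(z) = \zeta(z) - \tfrac{\pi^2}{3}E_2 z$ is holomorphic off the lattice, so all antiholomorphic dependence in $\widehat{Z}$ comes from $\mathbold{A} = \mathbold{Y}(\bar{z} - z)$. Combined with $\mathrm{vol} = \tfrac{\sqrt{-1}}{2\,\mathrm{im}\,\tau}dz\wedge d\bar{z}$ and the fact that $\Psi_k \in \mathcal{J}_1$ is meromorphic, hence $\bar{\partial}$-closed off the lattice, one obtains on the smooth part
\[
\bar{\partial}\Bigl(\tfrac{1}{k+1}\Psi_k \widehat{Z}^{k+1}\,dz\Bigr) = -2\pi i\,\Psi_k \widehat{Z}^k\,\mathrm{vol}.
\]
Applying $\dashint_E$ and Proposition \ref{prop-regularized-integral} yields $\dashint_E \Psi_k \widehat{Z}^k\,\mathrm{vol} = \tfrac{1}{k+1}\Res(\Psi_k \widehat{Z}^{k+1}\,dz)$, and summing over $k$ gives the main identity.

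For the ring-membership claim, only $\Res_{z=0}$ on a fundamental domain contributes, since $\Psi_k$ has poles only at lattice points and is elliptic. The key reorganization is to write $\widehat{Z} = \bigl(\zeta(z) - \tfrac{\pi^2}{3}\widehat{E}_2 z\bigr) + \mathbold{Y}\bar{z}$, absorbing the $-\mathbold{Y} z$ piece of $\mathbold{A}$ into the linear-in-$z$ term of $Z$ via $\tfrac{\pi^2}{3}E_2 + \mathbold{Y} = \tfrac{\pi^2}{3}\widehat{E}_2$. The elementary estimate $\oint_{|z|=\varepsilon}\bar{z}^{\ell} z^{n}\,dz = O(\varepsilon^{2\ell})$ for $\ell\geq 1$ then shows that all contributions containing positive powers of $\bar{z}$ vanish in the limit $\varepsilon\to 0$. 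Hence the residue is effectively computed from the meromorphic expression $\Psi_k\bigl(\zeta(z) - \tfrac{\pi^2}{3}\widehat{E}_2 z\bigr)^{k+1}$, whose Laurent coefficients at $z=0$ lie in $\mathbb{C}(E_4, E_6)[\widehat{E}_2]$, using that the nontrivial coefficients in the $\zeta$-expansion are Eisenstein series $G_{2m}$ with $m\geq 2$, all in $\mathbb{C}[E_4, E_6]$.

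The main obstacle is this last step: one must simultaneously recognize the regrouping that produces $\widehat{E}_2$ in place of $E_2$ and $\mathbold{Y}$ separately, and verify that the antiholomorphic coordinate drops out of residues rather than polluting them with $\bar{z}$-dependence. Once $\partial_{\bar{z}}\widehat{Z} = \mathbold{Y}$ and Proposition \ref{prop-regularized-integral} are in hand, the rest of the argument is a short direct calculation.
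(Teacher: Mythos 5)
Your proof is correct and follows essentially the same route as the paper: your identity $\partial_{\bar z}\widehat{Z}=\mathbold{Y}$ is just the paper's starting observation $\mathrm{vol}=\tfrac{1}{2\pi i}\,dz\wedge d\widehat{Z}$ in disguise, and both arguments then apply $\dashint_E\bar\partial\xi=-2\pi i\,\Res_E(\xi)$ from Proposition \ref{prop-regularized-integral} to $\xi=\tfrac{1}{k+1}\Psi_k\widehat{Z}^{k+1}dz$. The ring-membership step is also the paper's: regroup $\widehat{Z}=\bigl(\zeta(z)-\tfrac{\pi^2}{3}\widehat{E}_2 z\bigr)+\mathbold{Y}\bar z$, kill the $\bar z$-terms via \eqref{eqantiholomorphicresidue}, and use that the Laurent coefficients of $\zeta,\wp,\wp'$ lie in $\mathbb{C}[E_4,E_6]$.
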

\begin{proof}
 Observe that
\begin{equation}\label{eqnvolformintermsofZhat}
\mathrm{vol}={1\over 2\pi i}dz\wedge d\widehat{Z}\,.
\end{equation}
It follows from Proposition \ref{prop-regularized-integral} that
\[
\dashint_{E}\Psi\,\mathrm{vol}
=\sum_{k}{1\over 2\pi i }\dashint_{E}\Psi_{k} \widehat{Z}^{k} dz\wedge d\widehat{Z}
=-\sum_{k}{1\over k+1}{1\over 2\pi i}\dashint_{E} \dbar \bracket{\Psi_{k}  \widehat{Z}^{k+1}dz}
=\sum_{k}{1\over k+1}\mathrm{Res} (\Psi_{k} \widehat{Z}^{k+1} dz )\,.\]

By  \eqref{eqnWeierstrassellipticfunction}, one then obtains
\[\dashint_{E}\Psi\,\mathrm{vol}=\sum_{k}{1\over k+1}\mathrm{Res} \left(\Psi_{k}  \cdot (\zeta(z)-{\pi^2\widehat{E}_{2}\over 3}z-{\overline{z}\over \mathrm{im}\,\tau}) ^{k+1} dz \right)\,.
\]
According to \cite[Proposition 2.17]{Li:2020regularized} or the definition of $\Res_{p}(\xi)$ given in Proposition \ref{prop-regularized-integral}, one has
\begin{equation}\label{eqantiholomorphicresidue}
\Res_{z=0} (\overline{z}^{\ell}f)=0\,,\quad \ell\geq 1\,,
\end{equation}
for any meromorphic function $f$ in $z$.
Standard facts about Weierstrass elliptic functions tell that
the Laurent coefficients of $\wp(z), \wp'(z), {\zeta}(z) $ in $z$ are elements in $\mathbb{C}[E_{4},E_{6}]$.
Combining these, one arrives at the conclusion that $\dashint_{E}\Psi\,\mathrm{vol}\in \mathbb{C}(E_{4},E_{6})[\widehat{E}_{2}]$.
\end{proof}

\begin{rem} \label{remvolumeformforquasiness}
In \cite{Li:2020regularized}, instead of
 \eqref{eqnvolformintermsofZhat} we have used the identity
 \[
\mathrm{vol}=dz\wedge d\left( {i\over 2}{\bar{z}-z\over \mathrm{im}\tau}\right)
\]
which relates  regularized integrals to ordered $A$-cycle integrals and almost-holomorphic modular forms to quasi-modular forms (see also \cite{Douglas:1995conformal, Dijkgraaf:1997chiral}).
The latter expression for $\mathrm{vol}$ is natural from the Hodge theoretic viewpoint of quasi-modular forms
\cite{Urban:2014nearly, Ruan:2019}.
\end{rem}

Formally, Lemma \ref{lemregularizedintegralintermsofresidue}
reads that for an almost-elliptic function $f$ one has
\begin{equation}\label{eqnregularizedintegralintermsofresidue}
\dashint_{E} f\,=\mathrm{Res} \circ \partial_{\hat{Z}}^{-1}f\,,
\end{equation}
here $\partial_{\widehat{Z}}^{-1}$  indicates the anti-derivative with respect to $\widehat{Z}$. \\

Now let $\Phi\in {\mathcal{J}}_{n},n\geq 2 $ be an elliptic function. We will derive a residue formula for
\[
\dashint_{E_{[n]}}\Phi=\dashint_{E_{n}}\dashint_{E_{n-1}}\cdots \dashint_{E_{1}} \Phi\,.\]
Here $[n]=(1,2,\cdots,n)$, and $E_i$ indicates the $i$-th factor of their Cartesian product $E_{[n]}$ equipped with the coordinate $z_{i}$.
To emphasize the indexing, we shall sometimes write  $\mathcal{J}_{[n]},\widetilde{\mathcal{J}}_{[n]},
\widehat{\mathcal{J}}_{[n]}$ in place of $\mathcal{J}_{n},\widetilde{\mathcal{J}}_{n},\widehat{\mathcal{J}}_{n}$.

\begin{lem}\label{lempreservingalmostellipticity}
For any $\Psi\in \widehat{\mathcal{J}}_{[n],w}$ with $n\geq 2$, one has
\[
\dashint_{E_{i}}\Psi\in \widehat{\mathcal{J}}_{[n]-\{i\},w}\,.\]
In particular, one has $\dashint_{E_{[n]}}\Psi\in \mathbb{C}(E_{4},E_{6})[\widehat{E}_{2}]$.
\end{lem}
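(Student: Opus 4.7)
The plan is to integrate one variable at a time, reducing to a parametric version of the one-variable Lemma~\ref{lemregularizedintegralintermsofresidue}. Once we show that for each $i$, $\dashint_{E_i}$ sends $\widehat{\mathcal{J}}_{[n],w}$ into $\widehat{\mathcal{J}}_{[n]-\{i\},w}$, the ``in particular'' statement follows by iterating down to $n=1$ and applying Lemma~\ref{lemregularizedintegralintermsofresidue} at the final step.

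To integrate out $z_i$, first normalize $\Psi$ so that the only $z_i$-dependent $\widehat Z$-generator is $\widehat Z_{ij_0}$ for a chosen auxiliary index $j_0\in[n]-\{i\}$. Using the addition formula \eqref{eqnapplyingadditionformula} together with the oddness of $\widehat Z$, substitute for each $j\in[n]-\{i,j_0\}$
\[
\widehat Z_{ij}=\widehat Z_{ij_0}-\widehat Z_{jj_0}+\tfrac{1}{2}\frac{\wp'_{ij_0}-\wp'_{j_0 j}}{\wp_{ij_0}-\wp_{j_0 j}}.
\]
This rewrites $\Psi=\sum_{k\ge 0}\Psi_k\,\widehat Z_{ij_0}^{\,k}$, where each $\Psi_k$ is, as a function of $z_i$ alone, a rational function in $\wp_{ij},\wp'_{ij}$ ($j\neq i$) --- hence elliptic in $z_i$ --- with coefficients in $\widehat{\mathcal{J}}_{[n]-\{i\}}$. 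Applying the parametric version of Lemma~\ref{lemregularizedintegralintermsofresidue} to the $z_i$-integral yields
\[
\dashint_{E_i}\Psi=\sum_{k\ge 0}\frac{1}{k+1}\sum_{j\in[n]-\{i\}}\Res_{z_i=z_j}\bigl(\Psi_k\,\widehat Z_{ij_0}^{\,k+1}\,dz_i\bigr).
\]

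It remains to check that every local residue at $z_i=z_j$ lies in $\widehat{\mathcal{J}}_{[n]-\{i\},w}$. Set $w_0=z_i-z_j$; by \eqref{eqantiholomorphicresidue} we may replace every occurrence of $\bar w_0$ by $0$ before extracting the coefficient of $w_0^{-1}$. Under this replacement, every building block of the integrand expands in $w_0$ with coefficients already in $\widehat{\mathcal{J}}_{[n]-\{i\}}$: (a) $\wp_{ij},\wp'_{ij}$ have Laurent expansions with coefficients in $\mathbb{Q}[E_4,E_6]$; (b) $\wp_{ij'},\wp'_{ij'}$ for $j'\neq i,j$ have Taylor expansions with coefficients in $\mathbb{C}[\wp_{jj'},\wp'_{jj'},E_4,E_6]$ via the Weierstrass ODE; (c) the antiholomorphic substitution $\mathbold A_{ij_0}|_{\bar w_0=0}=\mathbold A_{jj_0}-\mathbold Y w_0$ combines with the meromorphic Taylor/Laurent data to give
\[
\widehat Z_{ij_0}\big|_{\bar w_0=0}=\begin{cases}\widehat Z_{jj_0}-w_0\,\widehat P_{jj_0}+O(w_0^2), & j\neq j_0,\\ w_0^{-1}-(\pi^2/3)\widehat E_2\,w_0+O(w_0^3), & j=j_0,\end{cases}
\]
so that the bare $\mathbold Y$'s reassemble cleanly into the completed generators $\widehat E_2,\widehat P_{jj_0}$. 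Extracting the coefficient of $w_0^{-1}$ of any product of these expansions therefore lands in $\widehat{\mathcal{J}}_{[n]-\{i\}}$, and weight preservation follows by a direct check on generators using $w(z)=w(\bar z)=-1$, $w(dz)=-1$, $w(\widehat Z)=1$, $w(\mathbold Y)=w(\widehat E_2)=2$.

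The principal obstacle is the ``completion matching'' in step (c): one must verify that the stray $\mathbold Y$-factors produced by the antiholomorphic shift do not persist as dangling $\mathbold Y$-terms, but instead absorb precisely into the shifts $E_2\mapsto\widehat E_2$ and $\wp\mapsto\widehat P$, so that the output is genuinely almost-elliptic rather than quasi-elliptic with extraneous $\mathbold Y$-terms. Once this completion is confirmed at the orders of $w_0$ that contribute to the residue, the remainder is straightforward linear bookkeeping on the generators of $\widehat{\mathcal{J}}_{[n]-\{i\}}$.
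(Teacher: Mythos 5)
Your proof is correct and follows essentially the same route as the paper's: normalize via the addition formula so that $\widehat Z_{ij_0}$ (the paper's $\widehat Z(z_i-z_m)$) is the only $z_i$-dependent $\widehat Z$-generator, apply the one-variable residue formula of Lemma \ref{lemregularizedintegralintermsofresidue}, and check each local residue lands in $\widehat{\mathcal{J}}_{[n]-\{i\},w}$ using \eqref{eqantiholomorphicresidue} together with the Laurent/Taylor expansions of the Weierstrass functions. Your explicit verification in step (c) that the stray $\mathbold{Y}$'s reassemble into $\widehat E_2$ and $\widehat P$ is exactly the content of the paper's one-line rewriting $\widehat Z = \zeta - \tfrac{\pi^2}{3}\widehat E_2\, z + \mathbold{Y}\bar z$ before discarding the antiholomorphic term, so the "principal obstacle" you flag is indeed resolved as you claim.
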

\begin{proof}
This is \cite[Proposition 3.15]{Li:2020regularized}. Here we give a different proof based on the residue formula in
Lemma \ref{lemregularizedintegralintermsofresidue}.
Pick any index not equal to $i$, say, $m$.
By applying the addition formula \eqref{eqnapplyingadditionformula} to $f=\widehat{Z}$, one can write
\[
\Psi=\sum_{k} \Psi_{k}\cdot (\widehat{Z}(z_{i}-z_{m}))^{k}\,,
\]
where $ \Psi_{k}\in\widehat{\mathcal{J}}_{[n], w-k}$ is meromorphic in $z_{i}$.
From Lemma \ref{lemregularizedintegralintermsofresidue}, one sees that
\[
\dashint_{E_{i}}\Psi=\sum_{k}
{1\over k+1}\mathrm{Res} \left(\Psi_{k} \cdot (\widehat{Z}(z_{i}-z_{m}))^{k+1} dz_{i} \right)
=\sum_{k}
{1\over k+1}\sum_{j} \Res_{z_{i}=z_{j}} \left(\Psi_{k}\cdot ( \widehat{Z}(z_{i}-z_{m}))^{k+1} dz_{i} \right)\,.
\]
If $n=2$, then $j=m$. Otherwise we apply the addition formula  \eqref{eqnapplyingadditionformula} to the triple of indices $(i,j,m)$ with $f=\widehat{Z}$. In either case, it suffices to show that for any
$ \psi_{\ell}\in\widehat{\mathcal{J}}_{[n], w-\ell}$ meromorphic in $z_{i}$, one has
\[
\Res_{z_{i}=z_{j}} \left(\psi_{\ell} \cdot (\widehat{Z}(z_{i}-z_{j}))^{\ell+1} dz_{i} \right)
\in \widehat{\mathcal{J}}_{[n]-\{i\},w}\,.\]
By \eqref{eqnWeierstrassellipticfunction} and \eqref{eqantiholomorphicresidue}, this is equivalent to showing
\[
\Res_{z_{i}=z_{j}} \left(\psi_{\ell} \cdot ({\zeta}(z_{i}-z_{j})-{\pi^{2}\widehat{E}_{2}\over 3}(z_{i}-z_{j}))^{\ell+1} dz_{i} \right)
\in \widehat{\mathcal{J}}_{[n]-\{i\},w}\,.
\]

According to Definition \ref{dfnalmostelliptic},
the function $\psi_{\ell}$ is represented as a polynomial in $\widehat{Z}(z_{a}-z_{b}),a,b\neq i$, with coefficients
being rational functions in $E_{4},E_{6},\wp(z_{i}-z_{k}), \wp'(z_{i}-z_{k}),\wp(z_{a}-z_{b}), \wp'(z_{a}-z_{b}), k,a,b\neq i $.
Using the addition formula \eqref{eqnadditionformula}, one can see that thel
Laurent coefficients of $\wp(z_{i}-z_{k}), \wp'(z_{i}-z_{k}), {\zeta}(z_{i}-z_{j}) $ in $z_{i}-z_{j}$ are elements in $\mathcal{J}_{[n]-\{i\}}$.
%The desired almost-ellipticity follows from the standard fact  that Laurent coefficients in $z$ of each of the functions $\widehat{Z}(z),\wp(z),\wp'(z)$ are elements in $\mathbb{C}%[E_{4},E_{6},\widehat{E}_{2}]$.
This then gives the desired  almost-ellipticity above.
The claim about the weight grading follows from the assignment that $w(z)=-1=-w(\partial_z)$.
The claim $\dashint_{E_{[n]}}\Psi\in \mathbb{C}(E_{4},E_{6})[\widehat{E}_{2}]$ follows by iteration and Lemma  \ref{lemregularizedintegralintermsofresidue}.
\end{proof}

% Using the translation invariance of $\dashint$ or
Applying
Lemma \ref{lempreservingalmostellipticity} and
Lemma \ref{lemregularizedintegralintermsofresidue} to $\dashint_{E_{n-1}}\dashint_{E_{[n-2]}}\Psi, \Psi\in \widehat{\mathcal{J}}_{[n]}$, we see that the last integration
$\dashint_{E_{n}}$ in $\dashint_{E_{[n]}}$ is actually trivial.
For this reason, we can use $z_{n}\in E_{n}$ as a reference in all constructions.
%Thanks to the addition formula \eqref{eqnapplyingadditionformula} and the parity of $\zeta$,
%any generator $\widehat{Z}(z_{i}-z_{j})$ is a polynomial in $\widehat{Z}(z_{i}-z_{n}),i\leq n-1$ with coefficients lying in $\mathcal{J}_{n}$.
%Similar statement also holds true for $Z(z_{i}-z_{j})$.

\begin{dfn}\label{dfnWdefinition}
Let $z_i$ be the linear coordinate on $E_i$ and $z=(z_{1},\cdots, z_{n})$.
For any function $f$ on $\mathbb{C}$ and any $1\leq a \neq b<n$, set
\begin{eqnarray*}
&&z_{ab}=z_{a}-z_{b}\,,\quad
f_{a}(z)=f(z_{a}-z_{n})\,,\quad
f_{a,b}(z)=f(z_{a}-z_{n})-f(z_{b}-z_{n})=f_{a}(z)-f_{b}(z)\,.
\end{eqnarray*}
We also use the convention that $f_{a,n}(z)=f_{a}(z)$.
\end{dfn}
In what follows, we shall frequently take $f=Z, \widehat{Z},\mathbold{A}$.
For example, $\widehat{Z}_{a}(z)=\widehat{Z}(z_{a}-z_{n}),\widehat{Z}_{a,b}(z)=\widehat{Z}_{a}(z)-\widehat{Z}_{b}(z)$.\\

To proceed, we  introduce a few more notations on residue operators.
 \begin{dfn}\label{dfnresidueoperatorcompositions}  Denote
\[R^{(a)}_{b}=
\mathrm{Res}_{z_{a}=z_{b}}\,,
%\quad
%R^{(a)}=\sum_{b: a< b} R^{(a)}_{b}\,,\quad
%R=\sum_{a={1}}^{{n}} \sum_{b: a< b}R^{(a)}_{b}\,.
\]
that is, $R^{(a)}_{b}(f):= \Res_{z_a=z_b} (f dz_a)$. The following convention will be used when we compose residue operations
\[
R^{(i)}_{i}=0\,,\quad\quad
\cdots R^{(i_{k})}_{j_{k}}\cdots R^{(i_{2})}_{j_{2}} R^{(i_{1})}_{j_{1}}=0\,,
\quad
\mathrm{if}\,\quad \exists\, \,k_{1}< k_{2}\,\,\mathrm{such~that~}\,\,
i_{k_{2}}= i_{k_{1}}\,\,\mathrm{or}\,\, j_{k_{2}}= i_{k_{1}}\,.
\]
\end{dfn}

\begin{prop}\label{propresidueformulaforregularizedintegral}
Let $\Phi\in \mathcal{J}_{n},n\geq 2$ be an elliptic function. Then one has
 \begin{equation*}
 \dashint_{E_{[n]}}\Phi=\dashint_{E_{n}}
\sum_{\substack{\mathbold{r}=(r_{1},\cdots, r_{n-1})\\
r_{1}>1,\, r_{2}>2,\,\cdots, \, r_{n-1}=n}} R^{(n-1)}_{r_{n-1}}\circ \cdots\circ R^{(1)}_{r_{1}}
(\Phi F_{\mathbold{r}})\,,
\end{equation*}
where the function $F_{\mathbold{r}}$ is given by the following formal integral formula
\begin{equation}\label{eqnFrformula}
F_{\mathbold{r}}=\int_{0}^{\widehat{Z}_{n-1}+0} dx_{n-1}\cdots  \int_{0}^{\widehat{Z}_{k,r_{k}}+ x_{r_{k}}}dx_{k}\cdots \int_{0}^{\widehat{Z}_{1,r_{1}}+
x_{r_{1}}}dx_{1}\,.
\end{equation}
Here the quantities $\widehat{Z}_{k,r_{k}},k=1,2,\cdots , n-1$ (as defined in Definition \ref{dfnWdefinition}) regarded as integration constants, and the convention
$x_{n}=0$ is understood.
\end{prop}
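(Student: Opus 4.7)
The plan is to prove the formula by iteratively applying Lemma \ref{lemregularizedintegralintermsofresidue} to the nested integrals $\dashint_{E_1},\dashint_{E_2},\ldots,\dashint_{E_{n-1}}$ in that order, fixing $z_n$ as a reference throughout and using $\widehat{Z}_k:=\widehat{Z}(z_k-z_n)$ as the preferred expansion variable at the $k$-th step (as in Definition \ref{dfnWdefinition}). By Lemma \ref{lempreservingalmostellipticity}, after the first $k-1$ integrations the running integrand $\Psi^{(k-1)}$ lies in $\widehat{\mathcal{J}}_{\{k,k+1,\ldots,n\}}$; expanding it in $\widehat{Z}_k$ via the addition formula \eqref{eqnapplyingadditionformula} and invoking Lemma \ref{lemregularizedintegralintermsofresidue} then gives
\[
\dashint_{E_k}\Psi^{(k-1)}=\sum_{r_k>k} R^{(k)}_{r_k}\bigl(\partial_{\widehat{Z}_k}^{-1}\Psi^{(k-1)}\bigr),
\]
where $\partial_{\widehat{Z}_k}^{-1}$ is the formal antiderivative in $\widehat{Z}_k$ with vanishing constant at $\widehat{Z}_k=0$. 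The constraint $r_k>k$ is automatic since the poles of $\Psi^{(k-1)}$ in $z_k$ only sit at $z_k=z_j$ with $j\in\{k+1,\ldots,n\}$, and $r_{n-1}=n$ is forced by the same reasoning.

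The core task is to identify the composite operator $R^{(n-1)}_{r_{n-1}}\partial_{\widehat{Z}_{n-1}}^{-1}\circ\cdots\circ R^{(1)}_{r_1}\partial_{\widehat{Z}_1}^{-1}$ applied to the elliptic function $\Phi$ with $R^{(n-1)}_{r_{n-1}}\cdots R^{(1)}_{r_1}(\Phi F_{\mathbold{r}})$. Since $\Phi\in\mathcal{J}_n$ is independent of every $\widehat{Z}_k$, it may be pulled outside all the antiderivatives; what requires explanation is the emergence of $F_{\mathbold{r}}$. The key observation is that each residue $R^{(j)}_{r_j}$ effectively substitutes $\widehat{Z}_j$ by the value $\widehat{Z}_{r_j}$ at leading order (with the convention $\widehat{Z}_{a,n}=\widehat{Z}_a$ handling the case $r_j=n$), so when we subsequently antidifferentiate in $\widehat{Z}_k$ for $k>j$ and later take residue at $r_k$, the effective antiderivative is shifted by $\widehat{Z}_{k,r_k}=\widehat{Z}_k-\widehat{Z}_{r_k}$. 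This is precisely what the upper limit $\widehat{Z}_{k,r_k}+x_{r_k}$ in the nested integral \eqref{eqnFrformula} records, with the running variable $x_{r_k}$ tracking the not-yet-performed outer antidifferentiations.

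I would make this precise by induction on $n$. The base case $n=2$ is Lemma \ref{lemregularizedintegralintermsofresidue} directly: the only $\mathbold{r}=(2)$, $F_{(2)}=\widehat{Z}_1$, and the formula reads $\dashint_{E_{[2]}}\Phi=\dashint_{E_2}R^{(1)}_2(\Phi\widehat{Z}_1)$. For the inductive step, applying Lemma \ref{lemregularizedintegralintermsofresidue} to $\dashint_{E_1}$ introduces the factor $\widehat{Z}_1$, matching the innermost integral $\int_0^{\widehat{Z}_{1,r_1}+x_{r_1}}dx_1=\widehat{Z}_{1,r_1}+x_{r_1}$ of $F_{\mathbold{r}}$ via the identification $\widehat{Z}_1=\widehat{Z}_{1,r_1}+\widehat{Z}_{r_1}$ and $x_{r_1}\leftrightarrow\widehat{Z}_{r_1}$, with the remaining $n-2$ integrations handled by the inductive hypothesis applied to the shifted antiderivative structure. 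The main obstacle lies in verifying that the iterated residue substitutions, together with the pending variables $x_{r_k}$, assemble cleanly into the nested-integral formula \eqref{eqnFrformula} without residual terms: Taylor corrections to $\widehat{Z}(z_k-z_n)$ about $z_k=z_{r_k}$ produce factors of $\widehat{P}(z_{r_k}-z_n)$ and its derivatives, which themselves lie in $\widehat{\mathcal{J}}_{\{k+1,\ldots,n\}}$, and the argument requires showing that these corrections are absorbed into the $\widehat{\mathcal{J}}$-valued coefficients and do not disturb the $\widehat{Z}$-shift pattern captured by $F_{\mathbold{r}}$.
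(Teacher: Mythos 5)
Your proposal is correct and follows essentially the same route as the paper's proof: iterate Lemma \ref{lemregularizedintegralintermsofresidue} one variable at a time, split $\widehat{Z}_k=\widehat{Z}_{k,r_k}+\widehat{Z}_{r_k}$ at each residue so that the first piece becomes elliptic and the second persists as a pending integration constant, and read off $F_{\mathbold{r}}$ from the accumulated antiderivatives. The ``main obstacle'' you flag at the end is precisely what Lemma \ref{lempreservingalmostellipticity} together with \eqref{eqantiholomorphicresidue} resolves — the correction terms are meromorphic in the remaining variables and are absorbed into the elliptic coefficients without introducing new $\widehat{Z}$-factors — which is also how the paper closes its argument.
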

\begin{proof}
In the $\dashint_{E_{m}}, m\geq 2$ integration, the integrand
$\dashint_{E_{m-1}}\cdots \dashint_{E_{1}}\Phi$,  regarded as a function in $z_{m}$,
has only possible poles located at
$z_{m}=z_{m+1},\cdots, z_{n}$ according to \cite[Proposition 2.26]{Li:2020regularized}. The proof follows by iterating Lemma \ref{lemregularizedintegralintermsofresidue} as follows.

From  Lemma \ref{lemregularizedintegralintermsofresidue} we have
\begin{eqnarray*}
 \dashint_{E_{1}} \Phi
& =&\sum_{r_{1}>1}
R^{(1)}_{r_{1}}(\widehat{Z}_{1}
 \Phi)
 =\sum_{r_{1}>1}\bracket{
R^{(1)}_{r_{1}}( \widehat{Z}_{1,r_{1}}
  \Phi)
  +
 R^{(1)}_{r_{1}}( \widehat{Z}_{r_{1}}
  \Phi)}\,.
\end{eqnarray*}
Similar to the reasoning in Lemma \ref{lempreservingalmostellipticity} using \eqref{eqantiholomorphicresidue},
we see that $R^{(1)}_{r_{1}}(\widehat{Z}_{1,r_{1}}\Phi)$ is meromorphic and thus an
elliptic function in $\mathcal{J}_{[n]-\{1\}}$.
%Indeed, by \cite[Proposition 2.17]{Li:2020regularized}, its anti-holomorphic part is proportional to
%$$
%R^{(1)}_{r_{1}}(\bar z_{1 r_1}\Phi)=(\bar z_{1 r_1}|_{z_1=z_{r_1}})\cdot R^{(1)}_{r_{1}}(\Phi)=0.
%$$
On the other hand,  $R^{(1)}_{r_{1}} (\widehat{Z}_{r_{1}}\Phi)=\widehat{Z}_{r_{1}} R^{(1)}_{r_{1}} (\Phi)$ is
 almost-elliptic, where the anti-holomorphic dependence is encoded explicitly via $\widehat{Z}_{r_{1}}$.
Consider the
remaining iterated regularized integrals in
$\dashint_{E_{[n]}}\Phi=\dashint_{E_{n}}\dashint_{E_{n-1}}\cdots \dashint_{E_{1}}\Phi$.
The quantity $\widehat{Z}_{r_{1}}$ is treated as an integration constant when applying $\dashint_{E_i}$ for $i< r_1$, until for the $\dashint_{E_{r_{1}}}$-integration
in which
 we apply Lemma \ref{lemregularizedintegralintermsofresidue}.  In fact, for any elliptic function $\varphi_{k}$ one has
\[
\dashint_{E_{i}}  \varphi_k \widehat{Z}_{i}^k= \sum_{r_i>i} R^{(i)}_{r_i}\bracket{\varphi_k {\widehat{Z}_{i}^{k+1}\over k+1}}= \sum_{r_i>i} R^{(i)}_{r_i}\bracket{\varphi_k {(\widehat{Z}_{i ,r_i}+\widehat{Z}_{r_i})^{k+1}\over k+1}}\,.
\]
Now using
 Lemma \ref{lempreservingalmostellipticity} and the addition formula \eqref{eqnapplyingadditionformula} to the consecutive iterated integrals whenever necessary,
the desired formula of $F_\mathbold{r}$  follows by iterating this process.

\end{proof}

\begin{rem}\label{remindependenceofantiderivatives}
Let $D_{i}=\partial_{\widehat{Z}_{i}}$ be the derivative in the generator $\widehat{Z}_{i}$ when acting on almost-elliptic functions.
A different choice of the anti-derivative $\partial_{\widehat{Z}_{i}}^{-1}(\widehat{Z}_{i}^k)$ in the above proof is differed by addition by an elliptic function in the remaining variables.
By applying the global residue theorem to the consecutive summation of residues,
this difference results in zero in the overall regularized integral.
\end{rem}

Using the property that the regularized integral $\dashint_{E_{[n]}}$ is independent of the ordering for the iterated regularized integrals, we can obtain the following alternative formula
that is analogous to \cite[Proposition 9, Proposition 10]{Oberdieck:2018} for ordered $A$-cycle integrals.
See Section \ref{secAcycleintegrals}
 for further discussions on their relations.

\begin{cor}\label{corinductiveresidueformulaforergularizedintegral}
Let $\Phi$ be an elliptic function in $\mathcal{J}_{n}$. Then one has
\begin{equation*}
\dashint_{E_{[n]}}\Phi=\dashint_{E_{n}}
\sum_{I: \,I=(i_{1},\cdots, i_{m})} \dashint_{E_{[n-1]-I}} R^{(i_{m})}_{n}\circ \cdots\circ R^{(1)}_{i_{2}}
(\Phi F_{I})\,,
\end{equation*}
where the summation is over all non-recurring sequences $I=(i_{1},\cdots ,i_{m})$ with
$i_{1}=1$, with length $m$ satisfying $1\leq m=|I|\leq n-1$, and with $i_{k}\neq n$ for $k=1,2,\cdots, m$.
The function $F_{I}$ can be chosen to be
\begin{equation}\label{eqnFIintegralformula}
\int_{0}^{\widehat{Z}_{i_{m}}+0} dx_{i_{m}}\cdots  \int_{0}^{\widehat{Z}_{i_{k},i_{k+1}}+ x_{i_{k}}}dx_{i_{k}}\cdots \int_{0}^{\widehat{Z}_{i_{1},i_{2}}+ x_{i_{2}}}dx_{i_{1}}\,,
\end{equation}
or alternatively chosen to be
\begin{equation}\label{eqnFIsimpleintegralformula}
{\widehat{Z}_{i_{1}}^{m}\over m!}\,.
\end{equation}
\end{cor}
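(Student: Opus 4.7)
The plan is to reorganize the proof of Proposition \ref{propresidueformulaforregularizedintegral} using the Fubini property of regularized integrals \cite[Corollary 2.39]{Li:2020regularized}, so that the iteration runs along a prescribed chain $I = (i_1 = 1, i_2, \ldots, i_m)$ from $1$ to $n$ rather than along the natural order $1, 2, \ldots, n-1$. For each $I$, Fubini gives
\[
\dashint_{E_{[n]}}\Phi = \dashint_{E_n}\,\dashint_{E_{[n-1]-I}}\,\dashint_{E_{i_m}}\cdots\dashint_{E_{i_1}}\Phi,
\]
after which I would apply Lemma \ref{lemregularizedintegralintermsofresidue} iteratively to the innermost chain integrations. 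The first step $\dashint_{E_1}\Phi = \sum_{j\neq 1} R^{(1)}_j(\widehat{Z}_1 \Phi)$ splits into the terminating case $j = n$ (giving the length-one chain $I = (1)$ with $F_{(1)} = \widehat{Z}_1$) and continuing cases $j = i_2 \in \{2, \ldots, n-1\}$; each continuing case is handled by applying $\dashint_{E_{i_2}}$ to the almost-elliptic integrand $R^{(1)}_{i_2}(\widehat{Z}_1 \Phi)$ via Lemma, producing the next residue $R^{(i_2)}_{i_3}$, and so on. Summing over all chains $I$ with $i_1 = 1$ and $i_k \neq n$ exhausts all possible residue targets at each chain step and recovers the full $\dashint_{E_{[n]}}\Phi$.

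The iterated-integral form \eqref{eqnFIintegralformula} of $F_I$ arises as the cumulative record of anti-derivatives introduced at each application of Lemma, in direct parallel to the derivation of $F_\mathbf{r}$ in Proposition \ref{propresidueformulaforregularizedintegral}: at the $k$-th chain step one decomposes the newly-introduced $\widehat{Z}_{i_k}$ as $\widehat{Z}_{i_k, i_{k+1}} + \widehat{Z}_{i_{k+1}}$, with the $\widehat{Z}_{i_k, i_{k+1}}$-piece vanishing at $z_{i_k} = z_{i_{k+1}}$ and contributing an elliptic residue, and the $\widehat{Z}_{i_{k+1}}$-piece treated as an integration constant that propagates to the next step; stacking these integration constants yields exactly the nested integral \eqref{eqnFIintegralformula}. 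To derive the simpler form $F_I = \widehat{Z}_{i_1}^m/m!$ in \eqref{eqnFIsimpleintegralformula}, I would instead choose the canonical anti-derivative at every step: starting from $\Phi$, one application of Lemma gives $F_{(1)} = \widehat{Z}_{i_1}$, and at stage $k$ an integrand of the form $\Psi \cdot \widehat{Z}_{i_1}^{k-1}/(k-1)!$ with $\Psi$ elliptic in $z_{i_k}$ transforms under $\dashint_{E_{i_k}}$ into $\sum_j R^{(i_k)}_j(\Psi \cdot \widehat{Z}_{i_1}^{k}/k!)$, raising the $\widehat{Z}_{i_1}$-power by one and dividing by the new exponent; after $m$ iterations along the chain, the factor $\widehat{Z}_{i_1}^m/m!$ emerges.

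The principal difficulty lies in the bookkeeping of Taylor corrections when $\Phi$ has non-simple poles: at each chain step, the residue $R^{(i_k)}_{i_{k+1}}$ applied to a product involving $\widehat{Z}_{i_k}^k$ picks up derivatives $\partial_{z_{i_k}}^\ell(\widehat{Z}_{i_k}^k)|_{z_{i_{k+1}}}$ that mix powers of $\widehat{Z}_{i_{k+1}}$ with elliptic quantities such as $\widehat{P}_{i_{k+1}}$ and its derivatives, and these must be consistently tracked through the whole chain. Proving that the two forms \eqref{eqnFIintegralformula} and \eqref{eqnFIsimpleintegralformula} give identical final answers---despite not being pointwise equal as functions---requires invoking Remark \ref{remindependenceofantiderivatives}: their difference is a polynomial in the elliptic quantities $\widehat{Z}_{i_k, i_{k+1}}$, and its contribution vanishes by the global residue theorem once one sums over all chains and performs the complementary $\dashint_{E_{[n-1]-I}}$ integrations.
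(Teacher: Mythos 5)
Your proposal is correct and follows essentially the same route as the paper: apply Lemma \ref{lemregularizedintegralintermsofresidue} to $\dashint_{E_1}$, branch on the residue target, use the Fubini/ordering-independence property to integrate next over the variable the residue landed on, iterate to obtain \eqref{eqnFIintegralformula}, and then justify \eqref{eqnFIsimpleintegralformula} via Remark \ref{remindependenceofantiderivatives} together with the telescoping identity $\widehat{Z}_{i_1}=\widehat{Z}_{i_1,i_2}+\cdots+\widehat{Z}_{i_{m-1},i_m}+\widehat{Z}_{i_m}$. The only imprecision is your opening display (the chain $I$ is determined adaptively by the successive residue targets rather than fixed in advance by a single application of Fubini), but your subsequent description of the branching makes clear this is understood.
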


\begin{proof}
The proof is similar to that of
Proposition \ref{propresidueformulaforregularizedintegral}. However,
instead of performing the iterated integration in the prescribed ordering $\dashint_{E_{n}}\dashint_{E_{n-1}}\cdots\dashint_{E_{1}}$ for integration, we apply the independence on the ordering and proceed as follows. By Lemma \ref{lemregularizedintegralintermsofresidue},  one has
\[\dashint_{E_{[n]}}\Phi=\dashint_{E_{n}}\cdots \dashint_{E_{2}}\dashint_{E_{1}}\Phi
=\dashint_{E_{n}}\cdots \dashint_{E_{2}}
\sum_{i_{2}:\,i_{2}\neq 1}R^{(1)}_{i_{2}}(\Phi \widehat{Z}_{1})\,.\]
If $i_{2}=n$, then
the above integrand $R^{(1)}_{i_{2}}(\Phi \widehat{Z}_{1})$ is meromorphic, and the integral is
\[
\dashint_{E_{n}}\dashint_{E_{[n-1]-\{1\}}} R^{(1)}_{n}(\Phi \widehat{Z}_{1})\,.
\]
If $i_{2}\neq n$, by using the independence on the integration order, the  corresponding integral is
\[
\sum_{i_{2}:\,i_{2}\neq n}\dashint_{E_{n}}\dashint_{E_{[n-1]-\{1,i_{2}\}}}\dashint_{E_{i_{2}}}
R^{(1)}_{i_{2}}(\Phi \widehat{Z}_{1})
=
\sum_{i_{2}:\,i_{2}\neq n}\sum_{i_{3}}\dashint_{E_{n}}\dashint_{E_{[n-1]-\{1,i_{2}\}}} R^{(i_{2})}_{i_{3}}\circ
R^{(1)}_{i_{2}}(\Phi\cdot  \widehat{Z}_{1,i_{2}} \widehat{Z}_{i_{2}}+ \Phi\cdot  {1\over 2}\widehat{Z}^{2}_{i_{2}})\,.
\]
If $i_{3}=n$, then the above integrand is meromorphic, otherwise one continues.
Iterating this procedure yields the desired formula of
$F_{I}$ given in \eqref{eqnFIintegralformula}.

As  explained in Remark \ref{remindependenceofantiderivatives},
different choices of the anti-derivative in any step results in zero in the overall regularized integral by the global residue theorem.
Now for $m\geq 1$, one has
\begin{eqnarray*}
\partial_{\widehat{Z}_{i_{m}}} \left({\widehat{Z}_{i_{1}}^{m}\over m!}
\right)
&=&
\partial_{\widehat{Z}_{i_{m}}} \left( {(\widehat{Z}_{i_{1},i_{2}}+\widehat{Z}_{i_{2},i_{3}}+\cdots + \widehat{Z}_{i_{m-1}, i_{m}}+\widehat{Z}_{i_{m}})^{m}\over m!}\right)\\
&=& {(\widehat{Z}_{i_{1},i_{2}}+\widehat{Z}_{i_{2},i_{3}}+\cdots + \widehat{Z}_{i_{m-1}})^{m-1}\over (m-1)!}
={\widehat{Z}_{i_{1}}^{m-1}\over (m-1)!}\,.
\end{eqnarray*}
It follows that \eqref{eqnFIsimpleintegralformula} also provides a coherent system of choices $\{F_{I}\}_{I}$ of anti-derivatives.\footnote{In fact,  $F_{I}$ in this system of choices also admits a formal integral formula
\[
\int_{-\widehat{Z}_{i_{1},i_{m}}}^{\widehat{Z}_{i_{m}}+0} dx_{i_{m}}\cdots  \int_{-\widehat{Z}_{i_{1},i_{k}}}^{\widehat{Z}_{i_{k},i_{k+1}}+ x_{i_{k+1}}}dx_{i_{k}}\cdots
\int_{-\widehat{Z}_{i_{1},i_{2}}}^{\widehat{Z}_{i_{2},i_{3}}+ x_{i_{3}}}dx_{i_{2}}
\int_{0}^{\widehat{Z}_{i_{1},i_{2}}+ x_{i_{2}}}dx_{i_{1}}\,.
\]
}
The proof is now complete.

\iffalse
The calculation for the $m=1, m=2$ cases
tells that the formula \eqref{eqnFIintegralformula}
for $F_{I}$ can be taken to be that in \eqref{eqnFIsimpleintegralformula}.
The validity of the formula in \eqref{eqnFIsimpleintegralformula} for general values of $m$
then follows by induction on $m$.
\fi
\end{proof}

\section{Holomorphic anomaly equations}
\label{secanomalyequations}

Lemma \ref{lempreservingalmostellipticity} tells that
for any $\Psi\in \widehat{\mathcal{J}}_{n}, n\geq 2$ the regularized integral $\dashint_{E_{[n]}}\Psi$
is a function in  $\mathbb{C}(E_{4}, E_{6})[ \widehat{E}_{2}]$
and thus is a
polynomial in $\widehat{E}_{2}$ with coefficient being elements in $\mathbb{C}(E_{4}, E_{6})$. We can treat it as a polynomial in $\mathbold{Y}$
with coefficients being elements in $\mathbb{C}(E_{4}, E_{6}) [E_{2}]$.
The non-holomorphic dependence introduced by the regularized integration operator is highly structured, as we now discuss.

\subsection{Holomorphic anomaly of the regularized integration operator}

The anti-holomorphic dependence of the regularized integral $\dashint_{E_{[n]}}\Psi$ is captured by
 the following holomorphic anomaly equation.
\begin{thm}\label{thmHAE}
Let $\Psi\in \widehat{\mathcal{J}}_{n}, n\geq 2$ be an almost-elliptic function, then one has
\begin{equation}\label{eqnHAE}
\partial_{\mathbold{Y}}\dashint_{E_{[n]}}\Psi
=\dashint_{E_{[n]}} \partial_{\mathbold{Y}}\Psi -\sum_{a,b:\,a< b}
\dashint_{E_{[n]-\{a\}}} R^{(a)}_{b}( (z_{a}-z_{b})\Psi)\,.
\end{equation}
Here in computing $\partial_{\mathbold{Y}}\Psi$, $\Psi$ is regarded as an element in $\widetilde{\mathcal{J}}_{n}\otimes \mathbb{C}[\mathbold{A}_{ij},\mathbold{Y}]$
with the convention that $\partial_{\mathbold{Y}}\mathbold{A}_{ij}=0$.
\end{thm}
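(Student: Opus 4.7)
The plan is to prove the theorem by induction on $n \geq 2$, using the Fubini property for iterated regularized integrals to reduce to a base identity for a single integration. Specifically, the key step is to establish the single-variable identity
\begin{equation*}
\partial_{\mathbold{Y}}\dashint_{E_i}\Psi \;=\; \dashint_{E_i}\partial_{\mathbold{Y}}\Psi \;-\; \sum_{b\neq i} R^{(i)}_b\bigl((z_i-z_b)\Psi\bigr)
\end{equation*}
for $\Psi \in \widehat{\mathcal{J}}_n$ and any fixed $i \in [n]$. Granting this, the full formula follows inductively: write $\dashint_{E_{[n]}}\Psi = \dashint_{E_{[n]-\{1\}}}\dashint_{E_1}\Psi$, apply the inductive hypothesis to $\dashint_{E_1}\Psi$, which lies in $\widehat{\mathcal{J}}_{[n]-\{1\}}$ by Lemma \ref{lempreservingalmostellipticity}, and then apply the single-variable identity to $\partial_{\mathbold{Y}}\dashint_{E_1}\Psi$. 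Since $R^{(a)}_b$ and multiplication by $(z_a-z_b)$ commute with $\dashint_{E_1}$ whenever $a, b \neq 1$ (ordinary Fubini for contour integrals commuting with an integration in a different variable), the $a=1$ contact terms produced by the base identity and the $a\geq 2$ contact terms produced by induction combine to give the full sum $\sum_{a<b}$.

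To establish the single-variable identity, I would fix a reference index $m \neq i$ and use the addition formula \eqref{eqnapplyingadditionformula} to write $\Psi = \sum_k \Psi_k \,\widehat{Z}(z_i-z_m)^k$ with $\Psi_k$ carrying no further $\widehat{Z}(z_i-z_m)$ factor, hence no $\bar z_i$-dependence. Lemma \ref{lemregularizedintegralintermsofresidue} then gives $\dashint_{E_i}\Psi = \sum_k \frac{1}{k+1}\sum_{b\neq i} R^{(i)}_b\bigl(\Psi_k\, \widehat{Z}(z_i-z_m)^{k+1}\bigr)$. The crucial observation is that inside $R^{(i)}_b$, by \eqref{eqantiholomorphicresidue}, any factor of $\overline{z_i-z_b}$ vanishes; writing $\bar z_i = \bar z_b + \overline{(z_i - z_b)}$ in $\widehat{Z}(z_i-z_m) = Z(z_i-z_m)+ \mathbold{Y}(\bar z_i - \bar z_m - (z_i-z_m))$, the surviving part rearranges to $Z(z_i - z_m) + \mathbold{A}(z_b - z_m) + \mathbold{Y}(z_b - z_i)$, which reduces, when $b = m$, to $\zeta(z_i - z_m) - \tfrac{\pi^2}{3}\widehat{E}_2(z_i - z_m)$. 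Under the convention $\partial_{\mathbold{Y}}\mathbold{A}_{jk} = 0$, the $\mathbold{Y}$-derivative of this effective expression is cleanly $z_b - z_i$, uniformly in $b$. Differentiating the residue formula with respect to $\mathbold{Y}$ and using the telescoping $\sum_k \tfrac{1}{k+1}(k+1)\Psi_k\, \widehat{Z}(z_i-z_m)^k = \Psi$ then produces exactly the desired contact term $-\sum_b R^{(i)}_b\bigl((z_i-z_b)\Psi\bigr)$ alongside $\dashint_{E_i}\partial_{\mathbold{Y}}\Psi$.

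I expect the main obstacle to be precisely this bookkeeping: reconciling the convention $\partial_{\mathbold{Y}}\mathbold{A}_{jk}=0$ with the reduction of $\widehat{Z}$ to its anti-holomorphic-trimmed form inside a residue. At a pole $z_i = z_b$ with $b \neq m$ the function $\widehat{Z}(z_i-z_m)$ is regular, so \eqref{eqantiholomorphicresidue} has to be applied to its Taylor expansion in $(z_i - z_b, \overline{z_i-z_b})$ rather than at a pole of $\widehat{Z}$ itself, and one must verify that the effective $\mathbold{Y}$-linear coefficient $z_b - z_i$ is the same in that case as in the pole case $b = m$. Once this uniform identity is secured, the remainder of the argument is a direct, if slightly technical, manipulation of the residue formula from Lemma \ref{lemregularizedintegralintermsofresidue}.
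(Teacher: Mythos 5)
Your overall architecture is the same as the paper's: a single-variable anomaly identity (this is the paper's Lemma \ref{lempartialregularizedintegral}, which you reprove correctly, including the key use of \eqref{eqantiholomorphicresidue} to kill the $\overline{z}_{1r}$-part of $\mathbold{A}_{1r}$ inside the residue and thereby expose the explicit $\mathbold{Y}z_{1r}$-dependence that survives $\partial_{\mathbold{Y}}$), followed by induction on $n$. The gap is in the inductive step, at the claim that $R^{(a)}_{b}\bigl((z_a-z_b)\,\cdot\,\bigr)$ commutes with $\dashint_{E_1}$ for $a,b\neq 1$ "by ordinary Fubini for contour integrals commuting with an integration in a different variable." This is precisely the nontrivial point, and naive Fubini does not justify it: the integrand $\Psi$ has poles along $z_1=z_a$ and $z_1=z_b$, and as $z_a\to z_b$ these two $z_1$-singularities pinch; this pinching is exactly what produces the pole of $\dashint_{E_1}\Psi$ along $z_a=z_b$. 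Concretely, for $z_1$ within distance $\varepsilon$ of $z_b$ the contour $|z_a-z_b|=\varepsilon$ defining $R^{(a)}_b$ also encircles the pole at $z_a=z_1$, so the $\varepsilon\to 0$ limit and the $z_1$-integration do not interchange for free.

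The commutation is nevertheless true, but it has to be proved. The paper does this by converting $\dashint_{E_1}$ into residues via Lemma \ref{lemregularizedintegralintermsofresidue} and then invoking the commutator identities of Lemma \ref{lemcommutatorresidue} (in particular \eqref{eqncommutatorofR}, which in the needed form reads $R^{(a)}_{b}\bigl(R^{(1)}_{a}+R^{(1)}_{b}\bigr)=R^{(1)}_{b}R^{(a)}_{b}$, together with \eqref{eqntrivialcommutatorofR} for disjoint index pairs); this is the content of the displayed identity \eqref{eqnRabwabintsimplified} in the paper's proof, and it is recorded separately as Corollary \ref{eqncorresiduecommuteswithregularizedintegral}. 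To close your argument you must either import Lemma \ref{lemcommutatorresidue} (which ultimately rests on the local monomial computation of \cite[Lemma 21]{Oberdieck:2018}) and carry out this residue bookkeeping, or give an independent contour-deformation argument that accounts for the pinching contribution $R^{(1)}_{b}R^{(a)}_{1}$; without one of these, the step as written does not go through.
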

This equation should be compared with
 the one
 %\cite{Eskin:2006} \cite[Lemma 4.2.2]{Pixton:2008},
  \cite[Theorem 7 (3)]{Oberdieck:2018} satisfied by ordered $A$-cycle integrals. See Section \ref{secAcycleintegrals}
 for further discussions.

We start by proving the following lemma.

\begin{lem}\label{lempartialregularizedintegral}
Let $\Psi$ be an almost-elliptic function in $\widehat{\mathcal{J}}_{n}$, with $ n\geq 2$.
Then one has
\begin{equation*}\label{eqnpartialregularizedintegral}
\partial_{\mathbold{Y}}\dashint_{E_{1}}\Psi=\dashint_{E_{1}}  \partial_{\mathbold{Y}} \Psi -\sum_{r}R^{(1)}_{r}( (z_1-z_r) \Psi)\,.
\end{equation*}
\end{lem}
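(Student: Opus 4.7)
The plan is to apply Lemma~\ref{lemregularizedintegralintermsofresidue} to express $\dashint_{E_1}\Psi$ as an explicit sum of residues, and then differentiate this expression formally in $\mathbold{Y}$. Fix any reference index $r\in\{2,\ldots,n\}$ and, using the addition formula \eqref{eqnapplyingadditionformula} repeatedly, expand
\[
\Psi \;=\; \sum_{k\ge 0}\Psi_k\,\widehat{Z}_{1r}^{\,k}
\]
so that no coefficient $\Psi_k$ contains a $\widehat{Z}_{1s}$ factor. Lemma~\ref{lemregularizedintegralintermsofresidue} then yields
\[
\dashint_{E_1}\Psi \;=\; \sum_{k\ge 0}\frac{1}{k+1}\sum_{j\neq 1}\Res_{z_1=z_j}\!\bigl(\Psi_k\,\widehat{Z}_{1r}^{\,k+1}\,dz_1\bigr).
\]

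The crucial step is to evaluate the formal $\partial_{\mathbold{Y}}$ of each individual residue. Writing $\widehat{Z}_{1r}=Z_{1r}+\mathbold{A}_{1r}$ and decomposing $\overline{z_{1r}}=\overline{z_{jr}}+\overline{(z_1-z_j)}$ near $z_1=z_j$, the antiholomorphic tail $\mathbold{Y}\,\overline{(z_1-z_j)}$ is killed inside the residue by \eqref{eqantiholomorphicresidue}, forcing the residue-equivalent substitution
\[
\widehat{Z}_{1r}\;\equiv\; Z_{1r}+\mathbold{A}_{jr}-\mathbold{Y}(z_1-z_j)\qquad\text{within }\Res_{z_1=z_j}.
\]
Under the formal convention $\partial_{\mathbold{Y}}\mathbold{A}_{jr}=0$, differentiating the right-hand side in $\mathbold{Y}$ produces the visible factor $-(z_1-z_j)$, so that $\partial_{\mathbold{Y}}\widehat{Z}_{1r}^{\,k+1}\equiv -(k+1)(z_1-z_j)\widehat{Z}_{1r}^{\,k}$ inside $\Res_{z_1=z_j}$. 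Combined with $\partial_{\mathbold{Y}}\Psi_k$ (which is well-defined without any substitution, since $\Psi_k$ involves $\mathbold{Y}$ only through $\widehat{E}_2$ and through $\mathbold{A}_{st}$ with $s,t\neq 1$, all constant in $z_1$), the product rule gives
\[
\partial_{\mathbold{Y}}\Res_{z_1=z_j}\!\bigl(\Psi_k\widehat{Z}_{1r}^{\,k+1}dz_1\bigr)
= \Res\!\bigl((\partial_{\mathbold{Y}}\Psi_k)\widehat{Z}_{1r}^{\,k+1}dz_1\bigr)
-(k+1)\Res\!\bigl((z_1-z_j)\Psi_k\widehat{Z}_{1r}^{\,k}dz_1\bigr).
\]

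Summing $\tfrac{1}{k+1}$ times this identity over $k\ge 0$ and $j\neq 1$: the first summand reassembles, via Lemma~\ref{lemregularizedintegralintermsofresidue} applied to the expansion $\partial_{\mathbold{Y}}\Psi=\sum_k(\partial_{\mathbold{Y}}\Psi_k)\widehat{Z}_{1r}^{\,k}$ (valid because $\partial_{\mathbold{Y}}\widehat{Z}_{1r}=0$ formally), into $\dashint_{E_1}\partial_{\mathbold{Y}}\Psi$; in the second summand the factor $(k+1)$ cancels the $\tfrac{1}{k+1}$, the sum over $k$ of $\Psi_k\widehat{Z}_{1r}^{\,k}$ reconstitutes $\Psi$, and one obtains $\sum_{j\neq 1}R^{(1)}_j\!\bigl((z_1-z_j)\Psi\bigr)$, giving the desired identity. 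The main conceptual obstacle is precisely the tension noted in the second paragraph: although the formal convention stipulates $\partial_{\mathbold{Y}}\mathbold{A}_{1r}=0$, once $\mathbold{A}_{1r}$ appears inside a residue at $z_1=z_j$ it must first be rewritten via the $\mathbold{Y}$-dependent substitution above, and the $\mathbold{Y}$ so revealed generates the contact-term correction on the right-hand side.
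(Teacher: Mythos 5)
Your proof is correct and takes essentially the same route as the paper's: both invoke the residue formula of Lemma \ref{lemregularizedintegralintermsofresidue}, isolate the extra $\mathbold{Y}$-dependence that the residue operation injects through the antiholomorphic tail of $\widehat{Z}$ near each pole (with $\mathbold{Y}\,\overline{z_{1j}}$ killed by \eqref{eqantiholomorphicresidue} and $-\mathbold{Y}\,z_{1j}$ surviving to produce the contact term), and use $\partial_{\mathbold{Y}}\mathbold{A}_{ij}=0$ for the remaining dependence. The differences are only bookkeeping: you fix a general reference index and phrase the key step as a substitution inside the residue, where the paper references against $z_n$ and applies the chain rule to $\widehat{Z}_{1,r}$.
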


\begin{proof}
By the linearity of the operators on both sides and by applying the addition formula \eqref{eqnapplyingadditionformula} as in Lemma \ref{lempreservingalmostellipticity},
it suffices to prove the statement for $\Psi=\widehat{Z}_1^{k}\varphi$, where
 $\varphi\in \mathcal{J}_{[n]-\{1\}}\otimes \mathbb{C}[\widehat{E}_{2}]$.
Applying Lemma \ref{lemregularizedintegralintermsofresidue}, one has
\begin{eqnarray*}
\partial_{\mathbold{Y}}\dashint_{E_1}\widehat{Z}_1^{k}\varphi&=&
\partial_{\mathbold{Y}} \sum_{r}R^{(1)}_{r}( {\widehat{Z}_{1}^{k+1}\over k+1}\varphi )\\
&=&\sum_{r} R^{(1)}_{r}( {\widehat{Z}_{1}^{k+1}\over k+1} \cdot \partial_{\mathbold{Y}} \varphi)
+\sum_{r}  R^{(1)}_{r}((\overline{z}_{1r}-z_{1r})\cdot \partial_{\widehat{Z}_{1,r}}   {(\widehat{Z}_{1,r}+\widehat{Z}_{r})^{k+1}\over k+1} \cdot \varphi)\\
&=&\dashint_{E_1}  \partial_{\mathbold{Y}}(\widehat{Z}_1^{k}\varphi )
-\sum_{r}R^{(1)}_{r}(z_{1r}\widehat{Z}_1^{k}\varphi )\,.
\end{eqnarray*}
Note that the operator
%although $\partial_{\mathbold{Y}}(\widehat{Z}_{1}^{k+1})=0$,
$R^{(1)}_{r}(\widehat{Z}_{1}^{k+1}-)$ can bring in non-trivial dependence in $\mathbold{Y}$ in the way displayed in the second term of the third expression above.
 In the last equality we have used
 the fact that $\partial_{\mathbold{Y}}\mathbold{A}_{ij}=0$,
 and
  $R^{(1)}_{r}(\overline{z}_{1r}-)=0$ according to \eqref{eqantiholomorphicresidue}.
\end{proof}

\begin{lem}\label{lemcommutatorresidue}
Let $\Psi$ be an almost-elliptic function in $\widehat{\mathcal{J}}_{n}$ with $ n\geq 3$.
Then one has the following identities of operators acting on $\Psi$
\begin{equation}\label{eqncommutatorofR}
R^{(a)}_{c}R^{(b)}_{c}-R^{(b)}_{c}R^{(a)}_{c}=R^{(b)}_{c}R^{(a)}_{b}
=-R^{(a)}_{c}R^{(b)}_{a}\,,\quad
\forall \,1\leq a,b,c\leq n\,,
\end{equation}
and
\begin{equation}\label{eqntrivialcommutatorofR}
R^{(c)}_{d}R^{(a)}_{b}=R^{(a)}_{b}R^{(c)}_{d}\,, \quad \mathrm{if}\,\quad \{a,b\}\cap \{c,d\}=\emptyset\,.
\end{equation}

\end{lem}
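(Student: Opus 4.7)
The plan is to deduce both identities in \eqref{eqncommutatorofR} from the global residue theorem for elliptic functions applied to $\Psi$ in the variable $z_a$, using that the anti-holomorphic dependence of $\Psi$ contributes nothing to residues in $z_a$ thanks to \eqref{eqantiholomorphicresidue}. The trivial commutation \eqref{eqntrivialcommutatorofR} is immediate: when $\{a,b\}\cap\{c,d\}=\emptyset$, the operators $R^{(a)}_b$ and $R^{(c)}_d$ extract Laurent coefficients in disjoint sets of variables, so they automatically commute.

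For \eqref{eqncommutatorofR}, I would argue as follows. Since $\Psi \in \widehat{\mathcal{J}}_n$ is elliptic in $z_a$, and since the non-meromorphic $\mathbold{A}_{ij}$-polynomial part in $z_a$ contributes zero to residues in $z_a$ by \eqref{eqantiholomorphicresidue}, the global residue theorem on $E_\tau$ gives $\sum_{k\neq a} R^{(a)}_k \Psi = 0$. Applying $R^{(b)}_c$ to this identity and splitting the sum into the cases $k=b$, $k=c$, and $k\notin\{a,b,c\}$, and using \eqref{eqntrivialcommutatorofR} in the last range to pull $R^{(b)}_c$ inside, produces
\begin{equation*}
R^{(b)}_c R^{(a)}_b \Psi + R^{(b)}_c R^{(a)}_c \Psi + \sum_{k\notin\{a,b,c\}} R^{(a)}_k R^{(b)}_c \Psi = 0.
\end{equation*}
On the other hand, $R^{(b)}_c \Psi$ is still elliptic in $z_a$, so the same global residue theorem, now applied to $R^{(b)}_c \Psi$, yields
\begin{equation*}
R^{(a)}_c R^{(b)}_c \Psi + \sum_{k\notin\{a,b,c\}} R^{(a)}_k R^{(b)}_c \Psi = 0.
\end{equation*}
Subtracting the second from the first gives the first equality of \eqref{eqncommutatorofR}. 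The second equality then follows by a symmetry trick: apply the first equality with $a$ and $b$ interchanged, then add the two versions so that the $R^{(a)}_c R^{(b)}_c - R^{(b)}_c R^{(a)}_c$ terms cancel, leaving $R^{(b)}_c R^{(a)}_b \Psi + R^{(a)}_c R^{(b)}_a \Psi = 0$.

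The only real obstacle is bookkeeping: one must verify that the global residue theorem is genuinely available for almost-elliptic functions of a single variable. This reduces to the observation that, after expanding $\Psi$ (or $R^{(b)}_c\Psi$) as a polynomial in $\mathbold{A}_{ij}, \widehat{E}_2, \mathbold{Y}$ with coefficients in $\widetilde{\mathcal{J}}_n$, the $\mathbold{A}_{ij}$-monomials involving $z_a$ contribute zero to residues in $z_a$ by \eqref{eqantiholomorphicresidue}, so the identity reduces to the classical statement that the sum of residues of a meromorphic elliptic function on $E_\tau$ vanishes. Checking that $R^{(b)}_c\Psi$ remains elliptic in $z_a$ is immediate from the definition, since the residue operation $\Res_{z_b=z_c}(-\,dz_b)$ commutes with translations of $z_a$.
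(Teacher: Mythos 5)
Your handling of \eqref{eqntrivialcommutatorofR} and the overall strategy of playing two total-residue identities against each other is attractive, but it rests on a false premise: the ``global residue theorem'' $\sum_{k\neq a}R^{(a)}_{k}\Psi=0$ does \emph{not} hold for almost-elliptic $\Psi\in\widehat{\mathcal{J}}_{n}$, nor even for quasi-elliptic ones. Indeed, the failure of exactly this identity is what makes the regularized integral nontrivial: by Lemma \ref{lemregularizedintegralintermsofresidue} the total residue in $z_{a}$ of $\widehat{Z}(z_{a}-z_{b})\Phi$ computes $\dashint_{E_{a}}\Phi$, which is generically nonzero. Concretely, for $\Psi=\widehat{Z}(z_{1}-z_{2})\,\wp(z_{1}-z_{2})$ one finds $\sum_{k\neq 1}R^{(1)}_{k}\Psi=R^{(1)}_{2}\Psi=-\tfrac{\pi^{2}}{3}\widehat{E}_{2}\neq 0$. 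The error hides in your ``bookkeeping'' paragraph: $\mathbold{A}_{aj}=\mathbold{Y}(\overline{z}_{aj}-z_{aj})$ is not purely anti-holomorphic, so after \eqref{eqantiholomorphicresidue} kills the $\overline{z}_{a}$-dependence what survives is only \emph{quasi}-elliptic in $z_{a}$ (because of the leftover $-\mathbold{Y}z_{aj}$ and the $\zeta$-part of $\widehat{Z}_{aj}$, neither of which is periodic under $z_{a}\mapsto z_{a}+\tau$), and moreover the surviving anti-holomorphic factors get evaluated as $\overline{z}_{kj}$, which depends on the pole $k$. The classical vanishing of the sum of residues applies to neither situation, so both displayed identities in your argument fail; one could hope that the two ``anomalies'' cancel upon subtraction, but you give no argument for that, and establishing it is essentially equivalent to the lemma itself.

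The paper avoids global arguments entirely: by linearity and polynomiality in $\widehat{Z}$, it reduces \eqref{eqncommutatorofR} to the Laurent monomials $z_{ac}^{m}z_{ab}^{n}\overline{z}_{ac}^{r}\overline{z}_{ab}^{s}$, uses \eqref{eqantiholomorphicresidue} to dispose of the case $r+s\geq 1$, and checks the remaining meromorphic case $r=s=0$ by an elementary binomial-coefficient identity (following \cite[Lemma 21]{Oberdieck:2018}); \eqref{eqntrivialcommutatorofR} is the same kind of local computation, and your ``disjoint variables'' argument for that part is essentially the same and is fine. If you want to retain a contour-theoretic flavour for \eqref{eqncommutatorofR}, you must deform contours locally around the colliding points rather than invoke ellipticity over a fundamental domain, which again comes down to the local Laurent computation.
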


\begin{proof}
 The identities \eqref{eqncommutatorofR} and \eqref{eqntrivialcommutatorofR} for quasi-elliptic functions in $\widetilde{\mathcal{J}}_{n}$ are
proved  \cite[Lemma 21]{Oberdieck:2018} by a direct local calculation on monomials.
To be a little more detailed,  using the meromorphicity of quasi-elliptic functions along the diagonals and the linearity of the residue operations, it suffices to check the statement
for the monomial $z_{ac}^{m} z_{ab}^{n}, m,n\in \mathbb{Z}$.
The statement is not trivially true only when $m\leq 0, n\leq 0$
in which case the validity follows from the elementary identity
$${n\choose n+1}\delta_{n+m+1,-1}\delta_{\{n+1\geq 0\}}={m\choose m+1}\delta_{n+m+1,-1}\delta_{\{m+1\geq 0\}}\,.$$
For almost-elliptic functions, using the polynomiality in $\widehat{Z}$,
to prove \eqref{eqncommutatorofR} it suffices to check the statement for the monomial $z_{ac}^{m} z_{ab}^{n}\overline{z}_{ac}^{r}\overline{z}_{ab}^{s},m,n\in \mathbb{Z}, r,s\in \mathbb{N}$.
The result \eqref{eqantiholomorphicresidue} tells that the statement is non-trivial only when $r=s=0$.
This then reduces to the local calculation just mentioned.
The identity \eqref{eqntrivialcommutatorofR} is also proved by the same type of  local calculation.
\end{proof}
%%%%
%%%%
\iffalse
\begin{rem}\label{remcommutingresidues}
By using the standard argument of moving integral contours,
\eqref{eqncommutatorofR} in Lemma \ref{lemcommutatorresidue} also hold for elliptic functions if $c=0$ with the convention $R^{(a)}_0:=\int_{A}dz_a$.
This is however not true for almost-elliptic functions.
Other interesting identities are the Jacobi identity
\[
[[R^{(a_1)}_{c},R^{(a_2)}_{c}],R^{(a_3)}_{c}]
+[[R^{(a_2)}_{c},R^{(a_3)}_{c}],R^{(a_1)}_{c}]+[[R^{(a_3)}_{c},R^{(a_1)}_{c}],R^{(a_2)}_{c}]=0\,,
\]
and the
 following result on nested commutators which can be proved by induction
\[
R^{(i_{2})}_{c}\cdots
R^{(i_{2})}_{i_{3}}
R^{(i_{1})}_{i_{2}}
=[[[R^{(i_{1})}_{c},R^{(i_{2})}_{c} ],R^{(i_{3})}_{c}]\cdots, R^{(i_{m})}_{c}]\,.
\]
\end{rem}
\fi
%%%
%%%
%%%
\begin{proof}[Proof of Theorem \ref{thmHAE}]
 We now prove the holomorphic anomaly equation by induction on  $n$.
The initial case with $n=2$ follows from Lemma \ref{lempartialregularizedintegral} and the triviality of the last one in the iterated regularized integrations.
Assuming the statement is true for $n-1,n\geq 3$, we
now prove the statement for $n$.
By the induction hypothesis, we have
\[\partial_{\mathbold{Y}}\dashint_{E_{[n]}}\Psi=
\dashint_{E_{[n]-\{1\}}}
\partial_{\mathbold{Y}}\dashint_{E_1}\Psi
-\dashint_{E_{[n]-\{1\}}}\sum_{2\leq a<b}R^{(a)}_{b} z_{ab}\dashint_{E_{1}}\Psi
\,.
\]
Applying  Lemma \ref{lempartialregularizedintegral}, it remains to prove
\begin{equation*}\label{eqnRabwabint1}
\sum_{2\leq a<b}R^{(a)}_{b}z_{ab}\dashint_{E_{1}}\Psi=\dashint_{E_{1}}\sum_{2\leq a<b}R^{(a)}_{b}z_{ab}\Psi\,.
\end{equation*}
By Lemma \ref{lemregularizedintegralintermsofresidue} and  \eqref{eqntrivialcommutatorofR} in Lemma \ref{lemcommutatorresidue},
it is enough to show that
\begin{equation}\label{eqnRabwabintsimplified}
R^{(a)}_{b}z_{ab} \left( R^{(1)}_{a} [\partial_{\widehat{Z}_{1}}^{-1}\Psi]
+R^{(1)}_{b} [\partial_{\widehat{Z}_{1}}^{-1}\Psi]\right)
= R^{(1)}_{b} [\partial_{\widehat{Z}_{1}}^{-1} R^{(a)}_{b}z_{ab} \Psi]\,.
\end{equation}
Here
as in the proof of Lemma \ref{lempartialregularizedintegral},
we have applied the addition formula  \eqref{eqnapplyingadditionformula}  to express $\widehat{Z}(z_1-z_i),i\neq n$ involved in $\Psi$
in terms of $\widehat{Z}_{1}$ and elliptic functions if needed.
Recall that $\partial_{\widehat{Z}_{1}}^{-1}$ arises from $\partial_{\overline{z}_{1}}^{-1}$.
Since
 by assumption $a,b\neq 1$, a local computation tells that for the right hand side of \eqref{eqnRabwabintsimplified}
\[ R^{(1)}_{b} [\partial_{\widehat{Z}_{1}}^{-1}R^{(a)}_{b}z_{ab} \Psi]
= R^{(1)}_{b} [R^{(a)}_{b}z_{ab}\partial_{\widehat{Z}_{1}}^{-1}\Psi]\,.
\]
The desired identity \eqref{eqnRabwabintsimplified} then follows from
\eqref{eqncommutatorofR} in Lemma \ref{lemcommutatorresidue}.
This finishes the inductive proof for the $n$ case.

\end{proof}

\subsection{Reformulations and interpretations}

The holomorphic anomaly equation \eqref{eqnHAE} in Theorem \ref{thmHAE}  now offers a practical way,
besides the residue formulas in
Proposition \ref{propresidueformulaforregularizedintegral} and Corollary \ref{corinductiveresidueformulaforergularizedintegral},  to recursively compute regularized integrals.
Moreover,
it exposes some interesting structures of the regularized integration operator,
as we now explain.

\subsubsection{Modular anomaly}
When acting on almost-elliptic functions,  \eqref{eqnHAE}
can be expressed as an identity between operators
%\begin{equation*}
%[\partial_{\mathbold{Y}},\dashint_{E_{[n]}}]=-{1\over 2}\sum_{a,b: a\neq b}\dashint _{E_{[n]-a}}R^{(a)}_{b}z_{ab}\,\quad
%\mathrm{when~ acting ~on}\,~\widehat{\mathcal{J}}_{n}\,.
%\end{equation*}
%Equivalently, this is
\begin{equation}\label{eqnHAEreformulated}
\partial_{\mathbold{Y}}\,(\dashint_{E_{[n]}})=-{1\over 2}\sum_{a,b:\,b\neq a}\dashint_{E_{[n]-a}}R^{(a)}_{b}(z_{ab}-)\,\quad
\mathrm{when~ acting ~on}\,~\widehat{\mathcal{J}}_{n}\,,
\end{equation}
where the left hand side is the usual action on operators, that is, $\partial_{\mathbold{Y}}\,(\dashint_{E_{[n]}}):=\partial_{\mathbold{Y}}\circ \dashint_{E_{[n]}}-\dashint_{E_{[n]}}\circ\partial_{\mathbold{Y}}$.

To proceed, we define the following operators similar to Definition \ref{dfnresidueoperatorcompositions}.
\begin{dfn} Define
\begin{eqnarray*}
&S^{(a)}_{b}=R^{(a)}_{b}(z_{ab}-)\,,\quad
S^{(a)}=\sum_{ r:\, r\neq a}R^{(a)}_{r}(z_{ar}-)\,,\quad
S=\sum_{a}S^{(a)}\,,\\
&\dashint S^{k}=\sum_{i_{1}, j_{1}}\sum_{i_{2},j_{2}}\cdots \sum_{i_{k},j_{k}}\dashint_{E_{[n]-\{i_1,i_{2}, \cdots,\, i_{k}\}}}
S^{(i_{k})}_{j_{k}}\cdots S^{(i_{2})}_{j_{2}}S^{(i_{1})}_{j_{1}}\,.
\end{eqnarray*}
\end{dfn}
\begin{lem}\label{lemSpreserve}
The operator
$S^{(a)}_{b}$ satisfies
\[S^{(a)}_{b}:
{\mathcal{J}}_{n,\,k}\rightarrow
{\mathcal{J}}_{n-1,\,k-2}\,,\quad
\widetilde{\mathcal{J}}_{n,\,k}\rightarrow
\widetilde{\mathcal{J}}_{n-1,\,k-2}\,,\quad
\widehat{\mathcal{J}}_{n,k}\rightarrow
\widehat{\mathcal{J}}_{n-1,\,k-2}\,,\quad n\geq 2\,.
\]

\end{lem}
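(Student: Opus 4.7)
I would prove all three inclusions simultaneously by Laurent-expanding $\Phi$ at the diagonal $z_a = z_b$, in direct parallel to (and a slight simplification of) the argument in the proof of Lemma~\ref{lempreservingalmostellipticity}. By construction
\[
S^{(a)}_{b}(\Phi) = R^{(a)}_{b}(z_{ab}\Phi) = \Res_{z_a = z_b}(z_{ab}\Phi\, dz_a)
\]
is the coefficient of $(z_a - z_b)^{-2}$ in the Laurent expansion of $\Phi$ about $z_a = z_b$. Since $w(z) = -1$, the coefficient of $(z_a - z_b)^m$ in a weight-$k$ function has weight $k + m$, so at $m = -2$ we obtain weight $k-2$, giving the claimed weight shift in all three cases at once.

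For the ring-theoretic content, I would fix $a$ with the reference index $b$ and use the addition formulas to concentrate all of the $z_a$-dependence of $\Phi$ onto the single difference $z_a - z_b$. Specifically, \eqref{eqnapplyingadditionformula} rewrites each $\widehat{Z}(z_a - z_j)$ with $j \neq a,b$ as $\widehat{Z}_{ab} + \widehat{Z}(z_b - z_j)$ plus an elliptic function in $(z_{ab}, z_{bj})$, while the classical addition formula for $\wp$ rewrites each $\wp(z_a - z_j), \wp'(z_a - z_j)$ rationally in terms of $\wp_{ab}, \wp'_{ab}, \wp(z_b - z_j), \wp'(z_b - z_j)$. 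After this reduction, $\Phi$ becomes a finite combination of monomials in $\widehat{Z}_{ab}, \wp_{ab}, \wp'_{ab}$ with coefficients in $\widehat{\mathcal{J}}_{n-1}$ of the appropriate weight. Restricting to $\widetilde{\mathcal{J}}_n$ (replace $\widehat{Z}$ by $Z$, drop $\mathbold{Y}$ and the $\mathbold{A}_{ij}$) or to $\mathcal{J}_n$ (further drop $Z$ and $E_2$) gives the analogous normal form with coefficients in $\widetilde{\mathcal{J}}_{n-1}$ or $\mathcal{J}_{n-1}$ respectively.

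The generators $Z(z_{ab}), \wp(z_{ab}), \wp'(z_{ab})$ each have standard Laurent expansions in $z_{ab}$ with coefficients in $\mathbb{C}[E_4, E_6]$, so in the elliptic and quasi-elliptic cases the extraction of the $(z_a - z_b)^{-2}$-coefficient immediately produces an element of $\mathcal{J}_{n-1,k-2}$ or $\widetilde{\mathcal{J}}_{n-1,k-2}$ of the asserted weight. The only subtlety is the almost-elliptic case, where $\widehat{Z}(z_{ab}) = Z(z_{ab}) + \mathbold{A}_{ab}$ and $\mathbold{A}_{ab} = \mathbold{Y}(\bar z_{ab} - z_{ab})$ carries the non-holomorphic variable $\bar z_{ab}$. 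Here the vanishing identity \eqref{eqantiholomorphicresidue} is the essential tool: under $R^{(a)}_{b}$ any positive power of $\bar z_{ab}$ vanishes, so in the image of the residue each $\widehat{Z}(z_{ab})^i$ behaves as a finite Laurent polynomial in $z_{ab}$ with coefficients in $\mathbb{C}[E_4, E_6, \mathbold{Y}] \subseteq \widehat{\mathcal{J}}_{n-1}$, and the coefficient extraction lands in $\widehat{\mathcal{J}}_{n-1,k-2}$.

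I expect the real work to lie in the almost-elliptic case, namely the careful bookkeeping needed to verify that the interaction of the addition formulas with the anti-holomorphic cancellation from \eqref{eqantiholomorphicresidue} indeed lands in $\widehat{\mathcal{J}}_{n-1}$ rather than in a strictly larger ring containing stray $\bar z$-dependent terms; the elliptic and quasi-elliptic cases then follow as direct specializations.
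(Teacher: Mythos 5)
Your proposal is correct and follows essentially the same route as the paper, whose proof of this lemma simply defers to the argument of Lemma \ref{lempreservingalmostellipticity}: concentrate all $z_a$-dependence on $z_{ab}$ via the addition formulas, kill the antiholomorphic terms with \eqref{eqantiholomorphicresidue}, Laurent-expand at $z_a=z_b$, and read off the weight shift from $w(z)=-1$. One small correction to your bookkeeping in the almost-elliptic case: after discarding the $\bar z_{ab}$-terms the surviving Laurent coefficients lie in $\mathbb{C}[E_4,E_6,\widehat{E}_2]$ (the $E_2\,z_{ab}$ and $\mathbold{Y}z_{ab}$ contributions combine into $\widehat{E}_2\,z_{ab}$), not in $\mathbb{C}[E_4,E_6,\mathbold{Y}]$ as written, and it is this combination that actually places the image inside $\widehat{\mathcal{J}}_{n-1,\,k-2}$.
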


\begin{proof}
The proof is similar to that of Lemma \ref{lempreservingalmostellipticity}.
\end{proof}

Now
\eqref{eqnHAEreformulated}
  can be further abbreviated as follows.
  \begin{cor}\label{corHAEabbreviation}
As operators on almost-elliptic functions in $\widehat{\mathcal{J}}_{n}$, the following holds
\begin{equation}\label{eqnHAEabbreviation}
\partial_{\mathbold{Y}}\,(\dashint_{E_{[n]}})=-{1\over 2 }\sum_{a}\dashint_{E_{[n]-a}}S^{(a)}=-{1\over 2}\dashint S\,.
\end{equation}
\end{cor}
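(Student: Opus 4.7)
The plan is to derive the corollary directly from Theorem \ref{thmHAE} by casting it as an operator identity and symmetrizing its right-hand side. Using the bracket convention $\partial_{\mathbold{Y}}(\dashint_{E_{[n]}}):=\partial_{\mathbold{Y}}\circ \dashint_{E_{[n]}}-\dashint_{E_{[n]}}\circ \partial_{\mathbold{Y}}$ introduced just above the statement, Theorem \ref{thmHAE} reads
\[\partial_{\mathbold{Y}}(\dashint_{E_{[n]}}) \;=\; -\sum_{a<b}\dashint_{E_{[n]-\{a\}}} S^{(a)}_{b}\qquad\text{on }\widehat{\mathcal{J}}_n,\]
where $S^{(a)}_b:=R^{(a)}_b(z_{ab}\cdot -)$. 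The task is then to rewrite this as $-\tfrac12\dashint S$.

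The main step is the symmetry lemma
\[\dashint_{E_{[n]-\{a\}}} S^{(a)}_{b}\Psi \;=\; \dashint_{E_{[n]-\{b\}}} S^{(b)}_{a}\Psi\qquad\text{for all }a\neq b.\]
To prove it, I would Laurent-expand an almost-elliptic $\Psi\in\widehat{\mathcal{J}}_n$ near the diagonal $z_a=z_b$ as $\Psi=\sum_k c_k(z_b,\underline z)\,(z_a-z_b)^k$, with $\underline z$ the remaining variables. A direct residue computation gives $S^{(a)}_b\Psi = c_{-2}(z_b,\underline z)$; the analogous expansion of $\Psi$ in powers of $(z_b-z_a)$ shows that $S^{(b)}_a\Psi$ equals the same function on the diagonal, now expressed with $z_a$ as the diagonal coordinate. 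By the Fubini-type theorem for regularized integrals and relabeling of the single diagonal coordinate as a dummy integration variable over the remaining $E^{n-1}$, the two iterated regularized integrals coincide.

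With the symmetry in hand, the sum over unordered pairs becomes half the sum over ordered pairs:
\[
-\sum_{a<b}\dashint_{E_{[n]-\{a\}}} S^{(a)}_{b}
\;=\; -\tfrac12\sum_{a}\dashint_{E_{[n]-\{a\}}}\sum_{b\neq a}S^{(a)}_b
\;=\; -\tfrac12\sum_{a}\dashint_{E_{[n]-\{a\}}}S^{(a)}
\;=\; -\tfrac12\dashint S,
\]
by the definition $S^{(a)}=\sum_{r\neq a}R^{(a)}_r(z_{ar}\cdot -)$ and the $k=1$ case of the definition of $\dashint S^k$. The only genuinely nontrivial ingredient is the symmetrization lemma; its verification is a short Laurent-expansion check combined with the order-independence of iterated regularized integrals.
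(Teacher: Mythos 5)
Your overall strategy---recasting Theorem \ref{thmHAE} as the operator identity $\partial_{\mathbold{Y}}(\dashint_{E_{[n]}})=-\sum_{a<b}\dashint_{E_{[n]-\{a\}}}S^{(a)}_{b}$ and then symmetrizing---is the right one, and the symmetry lemma you isolate, $\dashint_{E_{[n]-\{a\}}}S^{(a)}_{b}\Psi=\dashint_{E_{[n]-\{b\}}}S^{(b)}_{a}\Psi$, is indeed the crux and is a true statement. However, the verification you sketch for it is incorrect: $S^{(a)}_{b}\Psi$ and $S^{(b)}_{a}\Psi$ are \emph{not} the same function up to relabeling the diagonal coordinate. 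Writing $\Psi=\sum_{k}c_{k}(z_{b},\underline{z})(z_{a}-z_{b})^{k}$ (plus terms in $\overline{z_{a}-z_{b}}$, which drop out of every residue by \eqref{eqantiholomorphicresidue}), one has $S^{(a)}_{b}\Psi=c_{-2}(z_{b},\underline{z})$ as you say; but to compute $S^{(b)}_{a}\Psi$ one must re-expand the coefficients around $z_{b}=z_{a}$, and Taylor expansion of $c_{k}(z_{a}-(z_{a}-z_{b}))$ gives
\begin{equation*}
S^{(b)}_{a}\Psi=\sum_{k\le -2}\frac{(-1)^{k}}{(-k-2)!}\,\partial_{z_{a}}^{-k-2}c_{k}(z_{a},\underline{z})=c_{-2}(z_{a},\underline{z})-\partial_{z_{a}}c_{-3}(z_{a},\underline{z})+\tfrac{1}{2}\partial_{z_{a}}^{2}c_{-4}(z_{a},\underline{z})-\cdots\,.
\end{equation*}
A concrete counterexample to the pointwise claim is $n=3$, $\Psi=\wp'(z_{1}-z_{2})\wp(z_{2}-z_{3})$ with $a=1$, $b=2$: here $S^{(1)}_{2}\Psi=0$ while $S^{(2)}_{1}\Psi=2\wp'(z_{1}-z_{3})$.

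The lemma survives because the discrepancy consists entirely of $\partial$-exact terms: for $j\ge 1$ one has $\partial_{z_{a}}^{j}c_{k}\cdot\mathrm{vol}=\partial\bigl(\partial_{z_{a}}^{j-1}c_{k}\cdot\frac{\sqrt{-1}}{2\,\mathrm{Im}\,\tau}d\bar{z}_{a}\bigr)$, which is annihilated by $\dashint_{E_{a}}$ by Proposition \ref{prop-regularized-integral}; what survives is $\dashint_{E_{a}}c_{-2}(z_{a},\underline{z})=\dashint_{E_{b}}c_{-2}(z_{b},\underline{z})$, and the Fubini property then closes the argument exactly as you intend. So your proof is repairable, but the ``short Laurent-expansion check'' must include this integration-by-parts step, and one must also note (as you did not) why the $\mathbold{A}_{ab}$-dependence of an almost-elliptic $\Psi$ contributes nothing. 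Alternatively, the passage from Theorem \ref{thmHAE} to \eqref{eqnHAEreformulated} can bypass the term-by-term symmetry altogether: the proof of Theorem \ref{thmHAE} is insensitive to the labeling of the factors, so the identity holds with $a<b$ replaced by $a\prec b$ for any total order $\prec$ on $[n]$; since the left-hand side and the term $\dashint_{E_{[n]}}\partial_{\mathbold{Y}}\Psi$ are order-independent, averaging over all $n!$ orders counts each ordered pair $(a,b)$ with $a\neq b$ exactly half the time and produces the factor $\frac{1}{2}$ directly.
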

Consequently, one has the following convenient closed-form expression for the regularized integration operator on elliptic functions.
\begin{thm}\label{thmHAEBCOV}
As operators on elliptic functions in ${\mathcal{J}}_{n}$, the following holds
\begin{equation}\label{eqnHAEintegrated}
\dashint_{E_{[n]}}=-{1\over 2}\mathbold{Y} \dashint S
-({1\over 2})^{2}{1\over 2!}\mathbold{Y}^{2} \dashint SS
-({1\over 2})^3{1\over 3! }\mathbold{Y}^{3} \dashint SSS+\cdots
+{1\over n!}\sum_{\sigma\in\mathfrak{S}_{n}}\int_{A_{\sigma([n])}}\,.
\end{equation}
\end{thm}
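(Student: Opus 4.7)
The plan is to verify \eqref{eqnHAEintegrated} by showing that both sides, viewed as polynomials in $\mathbold{Y}$, agree at $\mathbold{Y}=0$ and have the same derivative with respect to $\mathbold{Y}$; since the difference is then $\mathbold{Y}$-independent and vanishes at $\mathbold{Y}=0$, it must vanish identically. Denote the right-hand side of \eqref{eqnHAEintegrated} by $R_n\Phi$. By Lemma \ref{lempreservingalmostellipticity}, $\dashint_{E_{[n]}}\Phi \in \mathbb{C}(E_4,E_6)[\widehat{E}_2]$ is a polynomial in $\mathbold{Y}$, and the same is true of each $\dashint S^k\Phi = \dashint_{E_{[n]-\{i_1,\ldots,i_k\}}}(S^k\Phi)$ since by Lemma \ref{lemSpreserve} the iterate $S^k\Phi$ remains elliptic.

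For the initial condition, at $\mathbold{Y}=0$ every term $\mathbold{Y}^k \dashint S^k\Phi$ with $k\geq 1$ vanishes, hence $R_n\Phi|_{\mathbold{Y}=0} = \frac{1}{n!}\sum_{\sigma\in\mathfrak{S}_n}\int_{A_{\sigma([n])}}\Phi$. By the holomorphic-limit identity \eqref{eqnaveraging} applied to the elliptic function $\Phi$, this is exactly $\dashint_{E_{[n]}}\Phi|_{\mathbold{Y}=0}$.

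For the derivative, Corollary \ref{corHAEabbreviation} gives $\partial_{\mathbold{Y}}\dashint_{E_{[n]}}\Phi = -\tfrac{1}{2}\dashint S\Phi$. To differentiate $R_n\Phi$ I would use that $S^k\Phi$ is elliptic (hence $\mathbold{Y}$-independent), so Corollary \ref{corHAEabbreviation} reapplied at level $n-k$ yields $\partial_{\mathbold{Y}}\dashint S^k\Phi = -\tfrac{1}{2}\dashint S^{k+1}\Phi$. Differentiating $R_n\Phi$ termwise produces one sum from $\partial_{\mathbold{Y}}\mathbold{Y}^k$ and one from $\partial_{\mathbold{Y}}\dashint S^k\Phi$; after reindexing the second by $k\mapsto k+1$, these two sums telescope against each other, leaving only the $k=1$ contribution $-\tfrac{1}{2}\dashint S\Phi$ and an outer boundary term proportional to $\dashint S^n\Phi$. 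The boundary term vanishes because the non-recurring convention in Definition \ref{dfnresidueoperatorcompositions} (together with the fact that each $S$ requires a pair of distinct remaining indices) forces $S^n\Phi=0$: by Lemma \ref{lemSpreserve} we have $S^{n-1}\Phi\in \mathcal{J}_1$, which has only one variable, so a further application of $S$ is an empty sum.

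Thus $R_n\Phi$ and $\dashint_{E_{[n]}}\Phi$ are polynomials in $\mathbold{Y}$ with the same value at $\mathbold{Y}=0$ and the same $\mathbold{Y}$-derivative, hence coincide. The main obstacle I anticipate is the telescoping calculation itself: one must track that $\dashint S^k$ is the \emph{full} $\mathbold{Y}$-dependent operator (not merely its holomorphic limit), justify the repeated application of Corollary \ref{corHAEabbreviation} by the ellipticity of $S^k\Phi$, and confirm that the residual boundary term at $k=n$ is annihilated by the shape of the $S$ operator; once these bookkeeping points are in place, the rest is formal.
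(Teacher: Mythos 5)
Your proposal is correct and is essentially the paper's argument run in the opposite direction: the paper obtains \eqref{eqnHAEintegrated} by iteratively integrating the anomaly equation \eqref{eqnHAEabbreviation} in $\mathbold{Y}$ (using Lemma \ref{lemSpreserve} so that each $S^k\Phi$ is again elliptic and the equation can be reapplied) with initial condition given by \eqref{eqnaveraging}, whereas you verify the resulting closed form by differentiating and matching the value at $\mathbold{Y}=0$ — the same three ingredients, and your bookkeeping of the telescoping and of the vanishing of $S^{n}\Phi$ is sound.
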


\begin{proof}
This follows by integrating \eqref{eqnHAEabbreviation} with respect to $\mathbold{Y}$,
 Lemma \ref{lemSpreserve} that $S$ preserves ellipticity, and \eqref{eqnaveraging} proved in \cite[Theorem 3.4]{Li:2020regularized}.
\end{proof}

The formulation of holomorphic anomaly equation in Theorem \ref{thmHAEBCOV}
resembles the form
of those in the physics literature
 \cite{Bershadsky:1993cx, Dijkgraaf:1997chiral}, and
 offers a mathematical formulation of contact term singularities systematically studied by Dijkgraaf \cite{Dijkgraaf:1997chiral}  (see also  \cite{rudd1994string, Douglas:1995conformal})
 via the $S$-operators.\\

We can also  reformulate Theorem \ref{thmHAEBCOV} using
 the generating series technique.
 For any formal series $F(T)=\sum_{n\geq 0}a_{n} {T^{n}} $ in $T$ and a sequence of non-commutative operators $O=\{O_{k}\}_{k\in\mathbb{N}}$, we define
 \[
 F(O)=\sum_{n\geq 0}{a_{n} \over n!}\sum_{\sigma\in \mathfrak{S}_{n}} O_{\sigma(n)}\circ O_{\sigma(n-1)}\cdots\circ O_{\sigma(1)}\,.
 \]
Then Theorem \ref{thmHAEBCOV} can be rewritten as
\begin{equation*}
e^{{1\over \hbar}\dashint}
=e^{{1\over \hbar}\dashint}\circ (\mathbold{1}- e^{{1\over 2}{1\over \hbar}\mathbold{Y}S})+e^{{1\over \hbar}\int_{A}}\,,
\end{equation*}
as formal generating series in ${1 / \hbar}$ of graded non-commutative operators acting on elliptic functions.
Here any operator appearing as a coefficient in $1/\hbar^{n}$ is understood to act on
elliptic functions in $\mathcal{J}_{n}$.
Furthermore, \eqref{eqnHAEabbreviation}
gives
\begin{equation*}\label{eqnHAEMCform}
\left(\partial_{\mathbold{Y}}+\circ {1\over 2}{1\over \hbar} S \right)e^{{1\over \hbar}\dashint}=0\,.
\end{equation*}

\subsubsection{Elliptic anomaly}

When acting on $\mathcal{J}_{n}$,
the regularized integral operators $\dashint_{E_{I}}, I\subseteq [n]$
 introduces not only non-holomorphicity
 $\tau$, but also
non-holomorphicity in $z$.
The latter enters through polynomials in $\mathbold{A}_{ij}$, according to
Lemma \ref{lempreservingalmostellipticity} and Definition \ref{dfnalmostelliptic}.
One then has the following result that is parallel to the modular anomaly equation in
Corollary \ref{corHAEabbreviation}.
\begin{cor}\label{corellipticanomaly}
As operators on elliptic functions in $\mathcal{J}_{n}$, one has
\[\partial_{\mathbold{A}_{b}}(\dashint_{E_{a}})
=
R^{(a)}_{b}\,.\]
Here  $\mathbold{A}_{b}=\mathbold{A}(z_{b}-z_{n})$ by Definition \ref{dfnWdefinition}, and the convention $\partial_{\mathbold{A}_{b}}\mathbold{Y}=0$ is understood.
\end{cor}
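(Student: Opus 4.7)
The plan is to make the $\mathbold{A}$-dependence of $\dashint_{E_a}\Phi$ completely explicit via the one-variable residue formula of Lemma \ref{lemregularizedintegralintermsofresidue}, then read off $\partial_{\mathbold{A}_b}$ by inspection. Since $\Phi$ is elliptic (and thus trivial as a polynomial in $\widehat{Z}_a$), that lemma applied in the $z_a$-variable with $z_n$ chosen as the reference, so that the antiderivative of the volume form is $\widehat{Z}_a = \widehat{Z}(z_a - z_n)$, gives
\[
\dashint_{E_a}\Phi \,=\, \sum_{r\in[n]-\{a\}} R^{(a)}_r\bigl(\Phi\,\widehat{Z}_a\bigr).
\]
I would then use the decomposition $\widehat{Z}_a = Z_a + \mathbold{A}_a$ from \eqref{eqnWeierstrassellipticfunction} to split this into an $\mathbold{A}$-free part $\sum_r R^{(a)}_r(\Phi Z_a)$, which lies in $\widetilde{\mathcal{J}}_{[n]-\{a\}}$ since both $\Phi$ and $Z_a$ are purely meromorphic, and the genuinely anomalous part $\sum_r R^{(a)}_r(\Phi \mathbold{A}_a)$ that carries all of the $\mathbold{A}$-dependence.

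For the latter, the key local observation is that since $\mathbold{A}$ is $\mathbb{R}$-linear one has $\mathbold{A}_a = \mathbold{A}_r + \mathbold{A}(z_a - z_r)$. The correction $\mathbold{A}(z_a - z_r)$ has holomorphic part proportional to $(z_a - z_r)$ and antiholomorphic part proportional to $(\bar z_a - \bar z_r)$; the former yields only an $\mathbold{A}$-free residue, and the latter is killed inside $R^{(a)}_r$ by the vanishing identity \eqref{eqantiholomorphicresidue}. Consequently
\[
R^{(a)}_r\bigl(\Phi \mathbold{A}_a\bigr) \,=\, \mathbold{A}_r\, R^{(a)}_r(\Phi) \,+\, (\text{$\mathbold{A}$-free term}),
\]
so the entire $\mathbold{A}$-content of $\dashint_{E_a}\Phi$ reduces to the explicit sum $\sum_{r \in [n]-\{a\}} \mathbold{A}_r\, R^{(a)}_r(\Phi)$, with the $r=n$ summand vanishing automatically since $\mathbold{A}_n = 0$.

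Applying the formal derivative $\partial_{\mathbold{A}_b}$, with the conventions $\partial_{\mathbold{A}_b}\mathbold{A}_r = \delta_{br}$ and $\partial_{\mathbold{A}_b}\mathbold{Y} = 0$, and noting that $\partial_{\mathbold{A}_b}\Phi = 0$ since $\Phi$ is elliptic, immediately singles out the $r = b$ term and produces $R^{(a)}_b(\Phi)$, proving the corollary. The only real obstacle is the local computation establishing that no new $\mathbold{A}$-variables are produced by the Laurent expansion of $\mathbold{A}_a$ about $z_a = z_r$ beyond the explicit $\mathbold{A}_r$ prefactor; this is exactly where \eqref{eqantiholomorphicresidue} is used in an essential way, paralleling the analogous step in the proof of Lemma \ref{lempartialregularizedintegral} for the modular anomaly.
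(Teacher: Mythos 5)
Your proof is correct and follows essentially the same route as the paper: both start from the residue formula $\dashint_{E_a}\Phi=\sum_{r\neq a}R^{(a)}_r(\widehat{Z}_a\Phi)$ of Lemma \ref{lemregularizedintegralintermsofresidue}, isolate the coefficient of $\mathbold{A}_r$ by killing the antiholomorphic remainder with \eqref{eqantiholomorphicresidue}, and then differentiate. The only (cosmetic) difference is that the paper groups the split as $\widehat{Z}_a=\widehat{Z}_{a,r}+\widehat{Z}_r$ rather than your $\widehat{Z}_a=Z_a+\mathbold{A}_r+\mathbold{A}(z_a-z_r)$; the $\mathbold{A}_r$-coefficient $R^{(a)}_r(\Phi)$ extracted is identical.
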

\begin{proof}
Let $\Phi$ be an elliptic function.
Recall from the proof of Proposition \ref{propresidueformulaforregularizedintegral}
that
\[\dashint_{E_{a}}\Phi
=\sum_{r:\, r\neq a}R^{(a)}_{r} (\widehat{Z}_{a}\Phi)
=\sum_{r: \, r\neq a}R^{(a)}_{r} (\widehat{Z}_{a,r}\Phi)+\sum_{r: \, r\neq a}R^{(a)}_{r} (\widehat{Z}_{r}\Phi)
\,.\]
According to \eqref{eqantiholomorphicresidue}, the first term in the last expression is meromorphic in $z$.
It follows that
\begin{eqnarray*}
\partial_{\mathbold{A}_{b}}
\dashint_{E_{a}}\Phi
= \partial_{\mathbold{A}_{b}}(\widehat{Z}_{b})\cdot R^{(a)}_{b} (\Phi)
=R^{(a)}_{b} (\Phi)\,.
\end{eqnarray*}
\end{proof}
%%%
%%%
%%%%
\iffalse
\begin{rem}

Recall that the generators $\mathbold{Y},\mathbold{A}$ are
algebraically independent ones in
the expansions of almost-meromorphic elliptic functions.
The proof in Corollary \ref{corellipticanomaly}  above  also tells that when acting on  elliptic functions, one has
$$\partial_{\mathbold{Y}} (\dashint_{E_{a}})
=-
\sum_{r} R^{(a)}_{r} ( z_{ar}-)
=-S^{(a)}\,.
$$
However, due to the fact that the $\mathbold{A}$-generator can also lead to non-trivial $\bar{\tau}$-dependence and that
the regularized integral does not preserve ellipticity,
 the statement in Corollary
\ref{corHAEabbreviation} does not follow from this trivially.
The same argument also shows that the relation  above still holds when the ordering $\mathfrak{o}$ does not satisfy $i_{n}=n$, provided that
one changes the definition of $\mathbold{A}_{b}$ accordingly without the shift by $-z_{n}$.
\end{rem}
\fi
%%%%%
%%%%
%%%
More generally, using  \eqref{eqnFrformula} in Proposition \ref{propresidueformulaforregularizedintegral}, one can show that for any $m\leq n$ one has
 \begin{equation*}\label{eqnellipticanomalyregularizdedintegraloperatorrelation}
 \partial_{\mathbold{A}_{b}}\dashint_{E_{i_{m}}}\cdots \dashint_{E_{i_{1}}}\Phi=
\sum_{\substack{J=(j_{1},\cdots, j_{m})\\
i_{1}< j_{1},\, i_{2}<  j_{2},\,\cdots, \, i_{m}< j_{m}}}
 \partial_{\mathbold{A}_{b}}
 R^{(i_{m})}_{j_{m}}\circ \cdots\circ R^{(i_{1})}_{j_{1}}
(D_{i_{m}}^{-1}\circ \cdots\circ  D_{i_{1}}^{-1} \Phi)\,,
\end{equation*}
Here as in Remark \ref{remindependenceofantiderivatives}, $D_{i}^{-1}=\partial_{\widehat{Z}_{i}}^{-1}$
is the anti-derivative in $\widehat{Z}_{i}$.
Note that
a summand on the right hand side is nonzero if and only if $b\in  J- (I\cup\{n\})$.\\
%Conversely, expanding the term $R^{(i_{k})}_{j_{k}}\circ \cdots\circ R^{(i_{1})}_{j_{1}}
%(D_{i_{n}}^{-1}\circ \cdots D_{i_{1}}^{-1} -)$
%in the generators $\mathbold{A}=(\mathbold{A}_{1},\cdots,
%\mathbold{A}_{n-1})$,
%the degree zero term is given by obtained by formally setting the generators
%$\widehat{Z}_{j}, j\in J-I$ in $F_{J}^{I}$ to zero.

Similar to \eqref{eqnHAEintegrated} in Theorem \ref{thmHAEBCOV},
the expansion for the iterated regularized integral $\dashint_{E_{[m]}}\Phi$ in terms of the generators
$\{\mathbold{A}_{b}, b\neq n\}$
 is also highly structured though admittedly  more complicated, reflecting the complexity of the possible degenerations
that can occur in the big diagonal $\Delta\subseteq E_{[n]}$.
The regularized integral $\dashint_{E_{[m]}}\Phi$
can then be regarded
as a sum of contributions from
various strata, where the summands are weighted by suitable
monomials in $\{\mathbold{A}_{b}, b\neq n\}$  that record the degeneration pattern
in each stratum in the big diagonal.
In physics terminology \cite{Douglas:1995conformal, Dijkgraaf:1997chiral}, this provides a coherent way of firstly
regularizing the divergences in the integrals via the 'point-splitting scheme' then picking counter terms that renormalize the integrals.
Our notion of regularized integral therefore produces a particularly nice procedure that respect almost-ellipticity and is of pure weight.

\section{Ordered $A$-cycle integrals on elliptic curves}
\label{secAcycleintegrals}

The ordered $A$-cycle integrals of
quasi-elliptic functions also admit nice residue formulas, similar to those for the regularized integrals in Proposition \ref{propresidueformulaforregularizedintegral}.
We shall derive these residue formulas and establish the connection between residue formulas for the two kinds of integrals.
Based on them, we
shall offer simpler proofs of some old results on ordered $A$-cycle integrals given in
 \cite{Goujard:2016counting, Oberdieck:2018, Li:2020regularized}.

\subsection{Residue formulas for ordered $A$-cycle integrals on elliptic curves}
\label{secresidueformulasAcycleintegrals}

In order to derive residue formulas for ordered $A$-cycle integrals,
it is convenient to introduce the following definition, following the notations in Section \ref{secpreminaries}.

\begin{dfn}\label{eqndifferenceoperatordfn}
Let $f\in\mathcal{J}_{1}$ be a quasi-elliptic function. Define the difference operator by
\[\delta: f\mapsto {1\over 2\pi i }(f(z-\tau)-f(z))\,.\]
\end{dfn}
According to Definition \ref{dfnalmostelliptic}, a quasi-elliptic function $\Psi$ is a polynomial in $Z$:
\begin{equation}\label{eqnstructureresultquasielliptic}
\Psi=\sum_{k} \Psi_{k}Z^{k}\,,\quad \Psi_{k}\in \mathcal{J}_{1}\otimes \mathbb{C}[E_{2}]\,.
\end{equation}
Define the operator $\Delta$ on the set of polynomials in $Z$ (with elliptic functions as coefficients) by linearly extending the following operator on monomials
\begin{equation}\label{eqndfnDelta}
\Delta: ({1\over 2\pi i}Z)^{k}\mapsto  ({1\over 2\pi i }Z+1)^{k}- ({1\over 2\pi i}Z)^{k}\,.
\end{equation}
Then from the functional equation
\eqref{eqnZtransform}
of $Z$,
 one has
\[\delta=\Delta\,\quad
\mathrm{when~ acting ~on}\,~\widetilde{\mathcal{J}}_{n}\,.\]

Let $\Delta^{-1}f$ be any $\Delta$-primitive of a quasi-elliptic function $f$, that is, $\Delta(\Delta^{-1}f)=f$.
We now derive a candidate for
$\Delta^{-1}{Z^{n}}$
in terms of quasi-elliptic functions using the generating series technique.
Recall that the Bernoulli polynomials $\{B_{n}(s)\}_{n\geq 0}$ are defined by
\[
\sum_{n\geq 0}B_{n}(s) {T^{n}\over n!}={T\over e^{T}-1}\cdot e^{sT}\,.
\]
In terms of the ordinary Bernoulli numbers $B_{n}=B_{n}(0)$, one has
\[
B_{n}(s)=
\sum_{k+\ell=n} {n\choose k}B_{k} {s^{\ell}}\,,\quad n\geq 0\,.
\]

\begin{lem}\label{lemexistenceofDeltainverse}
Let the notations be as above.
Then one has
\[
\Delta\left( {1\over (n+1)!}B_{n+1}({Z\over 2\pi i})\right)={Z^{n}\over (2\pi i)^{n} n!}\,,\quad n\geq 0\,.
\]

\end{lem}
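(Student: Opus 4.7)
The plan is to reduce the statement to the classical difference identity for Bernoulli polynomials. Introducing the shorthand $s := Z/(2\pi i)$, the definition \eqref{eqndfnDelta} of $\Delta$ simply says $\Delta: g(s) \mapsto g(s+1) - g(s)$ on polynomials in $s$. Therefore the claim is equivalent to
\[
\frac{1}{(n+1)!}\bigl(B_{n+1}(s+1) - B_{n+1}(s)\bigr) = \frac{s^n}{n!},
\]
i.e.\ to the well-known identity $B_{n+1}(s+1) - B_{n+1}(s) = (n+1)\,s^n$ for $n\geq 0$.

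I would verify this identity directly at the level of the generating function supplied in the statement. Multiplying both sides of the defining series $\sum_{n\geq 0} B_n(s)\,T^n/n! = T e^{sT}/(e^T-1)$ by $e^T-1$ and comparing coefficients of $T^{n+1}/(n+1)!$ is the cleanest route: one has
\[
\sum_{n\geq 0} \bigl(B_n(s+1)-B_n(s)\bigr)\frac{T^n}{n!}
= \frac{T e^{sT}}{e^T-1}\bigl(e^T-1\bigr)
= T e^{sT}
= \sum_{n\geq 0} s^n \frac{T^{n+1}}{n!},
\]
and extracting the coefficient of $T^{n+1}/(n+1)!$ yields $B_{n+1}(s+1)-B_{n+1}(s) = (n+1)s^n$, as required.

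Dividing by $(n+1)!$ and substituting $s = Z/(2\pi i)$ gives the claimed formula. I do not foresee any real obstacle here: the only mild subtlety is the identification of the operator $\Delta$ in \eqref{eqndfnDelta}, defined \emph{a priori} only on monomials in $Z$, with the translation-difference operator $g(s)\mapsto g(s+1)-g(s)$ on polynomials in $s=Z/(2\pi i)$; once one notes that $\Delta$ is defined by linear extension and that $B_{n+1}(s)$ is a polynomial in $s$, the reduction is immediate.
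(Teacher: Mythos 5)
Your proof is correct and follows essentially the same route as the paper: both derive the identity $B_{n+1}(s+1)-B_{n+1}(s)=(n+1)s^{n}$ from the generating function $Te^{sT}/(e^{T}-1)$ by computing $\sum_{n\geq 0}(B_{n}(s+1)-B_{n}(s))T^{n}/n!=Te^{sT}$, and then substitute $s=Z/(2\pi i)$. No issues.
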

\begin{proof}
Observe that
\[
\sum_{n\geq 0}\left(B_{n}(s+1)-B_{n}(s)\right) {T^{n}\over n!}={T\over e^{T}-1}\cdot (e^{(s+1)T}-e^{sT})
=T e^{sT}\,.
\]
It follows that
\[
B_{n}(s+1)-B_{n}(s)=n{s^{n-1}}\,,\quad n\geq 1\,.
\]
Setting $s=(1/2\pi i)Z$, one thus obtains
the desired claim.
\end{proof}

We have the
following result for $A$-cycle integrals that is
 analogous to the one for regularized integrals given in Lemma
\ref{lemregularizedintegralintermsofresidue}.
\begin{lem}\label{lemAcycleintegralofquasiellipticintermsofresidue}
Let $\Psi\in\widetilde{\mathcal{J}}_{1}$ be a quasi-elliptic function as given in \eqref{eqnstructureresultquasielliptic}.
Then one has
\[\int_{A}\Psi\,dz
=\sum_{k}\mathrm{Res} (\Psi_{k} \cdot \Delta^{-1}{Z}^{k}  )\,,
\]
where $\Delta^{-1}Z^{k}$ is any quasi-elliptic function $g_k\in \widetilde{\mathcal{J}}_1$ satisfying
$\Delta g_k=Z^{k}$.

\end{lem}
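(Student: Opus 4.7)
The plan is to manufacture a quasi-elliptic $\Delta$-primitive of $\Psi$ and then apply the residue theorem on a fundamental parallelogram. Concretely, by Lemma~\ref{lemexistenceofDeltainverse} each monomial $Z^{k}$ admits a primitive $g_{k}=\Delta^{-1}Z^{k}\in\widetilde{\mathcal{J}}_{1}$ (given explicitly through Bernoulli polynomials), so $h:=\sum_{k}\Psi_{k}g_{k}\in\widetilde{\mathcal{J}}_{1}$. Because the $\Psi_{k}\in\mathcal{J}_{1}$ are elliptic and $\Delta$ agrees with the difference operator $\delta$ on $\widetilde{\mathcal{J}}_{1}$, one obtains the functional equation
\[
h(z-\tau)-h(z)=2\pi i\,\Psi(z).
\]

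Next I would integrate $h\,dz$ counter-clockwise around the boundary of the fundamental parallelogram $P$ with vertices $\pm\tfrac{1}{2}\pm\tfrac{\tau}{2}$, which contains the only pole of $h$ (namely $z=0$) in its interior. The $1$-periodicity $h(z+1)=h(z)$ makes the integrals along the two vertical sides cancel. After the change of variable $w=s+\tau/2$, the contribution from the two horizontal sides collapses to
\[
\int_{\tau/2-1/2}^{\tau/2+1/2}\!\bigl[h(w-\tau)-h(w)\bigr]\,dw\;=\;2\pi i\int_{A}\Psi\,dz,
\]
where $1$-periodicity of $\Psi$ is used to identify the horizontal segment at imaginary height $\tfrac12\mathrm{im}\,\tau$ with a representative of the $A$-cycle (the value $\varepsilon=\tfrac12$ in the notation of the introduction). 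On the other hand, the residue theorem gives $\oint_{\partial P}h\,dz=2\pi i\,\Res_{z=0}(h\,dz)=2\pi i\sum_{k}\Res(\Psi_{k}g_{k})$. Comparing the two expressions yields the claimed formula.

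The remaining point is well-definedness: two choices of $g_{k}$ differ by a $\Delta$-closed element of $\widetilde{\mathcal{J}}_{1}$, which, since $Z(z+\tau)=Z(z)-2\pi i$ forces any $\tau$-periodic element of $\widetilde{\mathcal{J}}_{1}$ to be $Z$-independent, must actually lie in $\mathcal{J}_{1}$. Hence the ambiguity in $\Psi_{k}g_{k}$ is an elliptic function, whose total residue on $E$ vanishes by the classical residue theorem on the torus. The principal conceptual step is the identification $\delta=\Delta$ on $\widetilde{\mathcal{J}}_{1}$ and the use of the primitive $h$ to convert the $A$-cycle integral into a contour integral of a quasi-elliptic function; beyond that the argument is a standard period calculation, and I expect the only care needed is in bookkeeping of signs and orientations.
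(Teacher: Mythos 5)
Your argument is correct and is essentially the paper's own proof: both pass to the $\Delta$-primitive $h=\sum_k\Psi_k g_k$ supplied by Lemma~\ref{lemexistenceofDeltainverse}, use $1$-periodicity of $h$ to cancel the vertical sides of a fundamental parallelogram, and use the functional equation $h(z-\tau)-h(z)=2\pi i\,\Psi(z)$ to turn the horizontal sides into $2\pi i\int_A\Psi$, equating this with $2\pi i$ times the total residue (the paper phrases the same computation as Stokes/reciprocity applied to $\int_{A^+-A^-}\Psi_k g_k$), and both dispose of the choice of primitive via the vanishing of the total residue of an elliptic function. The one slip is your claim that $z=0$ is the only pole of $h$: elements of $\mathcal{J}_1=\mathbb{C}(\wp,\wp',E_4,E_6)$ may have poles at arbitrary points of $E$, but this is harmless since $\oint_{\partial P}h\,dz$ equals $2\pi i$ times the sum of residues at \emph{all} poles inside $P$, which is exactly what $\Res$ means in the statement.
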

\begin{proof}
The existence of $g_{k}\in \widetilde{\mathcal{J}}_{1}$ follows from Lemma \ref{lemexistenceofDeltainverse}.
For any such $g_{k}\in \widetilde{\mathcal{J}}_1$, by \eqref{eqnZtransform}
one has $g_{k}(z+1)=g_{k}(z)$.
It follows that
\[-{1\over 2\pi i}\int_{A^{+}-A^{-}} \Psi_{k}\cdot g_{k}
=
-{1\over 2\pi i}\int_{A} \Psi_{k}( g_{k}(z)-g_{k}(z-\tau))
=\int_{A}\Delta(\Psi_{k}g_{k})=\int_{A}\Psi_{k}Z^{k}\,,\]
where $A^{+}$ is the cycle $A$ and $A^{-}$
the segment connecting $0$ to $1$ on the universal cover of $E_{\tau}$.
By Stokes theorem (or reciprocity law for Abelian differentials), one has
\[\mathrm{Res} (\Psi_{k}\cdot g_{k})=
-{1\over 2\pi i}\int_{A^{+}-A^{-}} \Psi_{k}\cdot g_{k}\,.\]
Summing over $k$ yields the desired claim.

\end{proof}

As explained in Remark \ref{remindependenceofantiderivatives}, different choices for the $\Delta$-primitive  yield identical results by the global residue theorem.
Similar to
\eqref{eqnregularizedintegralintermsofresidue}, the residue formula in
Lemma \ref{lemAcycleintegralofquasiellipticintermsofresidue}
can be expressed more succinctly as the following relation between operators on quasi-elliptic functions
\begin{equation}\label{eqnAcycleintegralintermsofresidue}
\int_{A}(-)
= \mathrm{Res}\, (\Delta^{-1}-)\,.
\end{equation}

Next we derive
residue formulas for ordered $A$-cycle integrals following the same strategy for the derivation of those for regularized integrals.
The results are analogues of Lemma \ref{lempreservingalmostellipticity}, Proposition \ref{propresidueformulaforregularizedintegral} and Corollary
\ref{corinductiveresidueformulaforergularizedintegral}, and are reformulations of results from \cite{Goujard:2016counting, Oberdieck:2018}.

\begin{prop}\label{propresidueformulasforAcycleintegral}
\begin{enumerate}[(i).]
\item For any $\Psi\in \widetilde{\mathcal{J}}_{[n],k}$, one has
\[
\int_{A_{i}}\Psi\,dz_{i}\in \widetilde{\mathcal{J}}_{[n]-\{i\},\leq k}\,.
\]
In particular, one has $\int_{A_{n}}\int_{A_{n-1}}\cdots \int_{A_{1}}\Psi\in \mathbb{C}(E_{4},E_{6})[{E}_{2}]$.
\item
Let $\Phi$ be an elliptic function in $\mathcal{J}_{n}$. Then one has
 \begin{equation}
 \int_{A_{[n]}}\Phi=\dashint_{A_{n}}
\sum_{\substack{\mathbold{r}=(r_{1},\cdots,\, r_{n-1})\\
r_{1}>1,\, r_{2}>2,\cdots,\, r_{n-1}=n}} R^{(n-1)}_{r_{n-1}}\circ \cdots\circ R^{(1)}_{r_{1}}
(\Phi G_{\mathbold{r}})\,.
\end{equation}
Here the function $G_{\mathbold{r}}
$ is any quasi-elliptic function satisfying
\begin{equation*}
\Delta_{n-1}\cdots \Delta_{{1}}\,G_{\mathbold{r}}=1\,,
\end{equation*}
where $\Delta_{i}$ is defined as in \eqref{eqndfnDelta} but
with  $Z$ replaced by
$Z_{i}$.
\item
Let $\Phi$ be an elliptic function in $\mathcal{J}_{n}$. Then one has
\[{1\over n!}\sum_{\sigma\in\mathfrak{S}_{n}}\int_{A_{\sigma([n])}}\Phi
=\sum_{I:\, i_{1}=1}{1\over (n-|I|)!}
\sum_{\tau}\int_{A_{\tau(I^{c})}}R^{I}_{I[1]}({Z_{i_{1}}^{m}\over m!}\Phi)\,.
\]
Here
$I=(i_{1},\cdots, i_{m})$ is a non-recurring sequence of length $m\geq 1$ that satisfies $i_{1}=1, i_{k}\neq n, 1\leq k\leq m$,
$I[1]:=(i_{2},\cdots, i_{m},i_{m+1}:=n)$,
and the summation over $\tau$ is over all possible permutations
of $I^{c}:=[n]-I$.
\end{enumerate}

\end{prop}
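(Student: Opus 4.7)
My plan is to prove the three parts in succession, each mirroring a corresponding result earlier in the paper with the $A$-cycle residue formula \eqref{eqnAcycleintegralintermsofresidue} playing the role of \eqref{eqnregularizedintegralintermsofresidue}. For (i), I would imitate the proof of Lemma~\ref{lempreservingalmostellipticity}. Fix an index $m\neq i$ and expand $\Psi\in \widetilde{\mathcal{J}}_{[n],k}$ as a polynomial in $Z(z_i-z_m)$ with coefficients in $\widetilde{\mathcal{J}}_{[n]}$ meromorphic in $z_i$, by repeated use of the addition formula \eqref{eqnapplyingadditionformula} applied to $f=Z$. Lemma~\ref{lemAcycleintegralofquasiellipticintermsofresidue} then reduces $\int_{A_i}\Psi\,dz_i$ to a finite sum of residues in $z_i$ of products $\Psi_\ell\cdot \Delta^{-1}Z(z_i-z_m)^\ell$, where by Lemma~\ref{lemexistenceofDeltainverse} each $\Delta^{-1}Z^\ell$ can be taken to be $B_{\ell+1}(Z/2\pi i)/(\ell+1)!$. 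Applying the addition formula once more at each pole locus $z_i=z_j$ and using that the Laurent coefficients of $\wp,\wp',\zeta$ at the diagonal lie in $\mathcal{J}_{[n]-\{i\}}$, one gets an element of $\widetilde{\mathcal{J}}_{[n]-\{i\}}$. The inequality $\leq k$ rather than equality reflects the fact that the Bernoulli polynomials $B_{\ell+1}(s)$ are not homogeneous in $s$, so that $\Delta^{-1}$ introduces lower-weight tails; the second assertion follows by iterating in $i$.

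For (ii), I would follow the proof of Proposition~\ref{propresidueformulaforregularizedintegral}. The key structural input is the $A$-cycle analogue of [Li:2020regularized, Proposition 2.26]: after performing $\int_{A_{j-1}}\cdots\int_{A_1}$, the resulting function of $z_j$ has poles only at $z_j=z_k$ with $k>j$, which can be established inductively from \eqref{eqnAcycleintegralintermsofresidue} and the addition formula, pinning down the constraint $r_j>j$. Iterating \eqref{eqnAcycleintegralintermsofresidue} in the order $\int_{A_1},\ldots,\int_{A_{n-1}}$ and repeatedly decomposing via the addition formula to localize each diagonal contribution, the composition of the $\Delta_j^{-1}$ operators produces the function $G_{\mathbold{r}}$. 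Its defining property $\Delta_{n-1}\cdots \Delta_1 G_{\mathbold{r}}=1$ arises because, at the outermost step of the iterative construction, each $\Delta_j^{-1}$ is applied to the constant function $1$; as in Remark~\ref{remindependenceofantiderivatives}, the ambiguity in the choice of each $\Delta_j$-primitive is killed in the total sum by the global residue theorem.

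For (iii), my approach is to take the holomorphic limit $\mathbold{Y}\to 0$ in Corollary~\ref{corinductiveresidueformulaforergularizedintegral}, using the simple choice \eqref{eqnFIsimpleintegralformula} of $F_I=\widehat{Z}_{i_1}^m/m!$. By \eqref{eqnaveraging}, the left-hand side becomes $\tfrac{1}{n!}\sum_\sigma \int_{A_{\sigma([n])}}\Phi$. On the right-hand side, the generator $\widehat{Z}_{i_1}$ reduces to $Z_{i_1}$ as $\mathbold{A}_{i_1}\to 0$ when $\mathbold{Y}=0$, so $F_I$ specializes to $Z_{i_1}^m/m!$, while the residue operators $R^{(i_k)}_{i_{k+1}}$ carry no $\mathbold{Y}$-dependence and commute with the holomorphic limit. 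The combined outer iterated integral $\dashint_{E_n}\dashint_{E_{[n-1]-I}}=\dashint_{E_{I^c}}$ (since $n\in I^c$ and regularized integration is Fubini) then becomes $\tfrac{1}{(n-|I|)!}\sum_\tau \int_{A_{\tau(I^c)}}$ by a second application of \eqref{eqnaveraging} to the quasi-elliptic integrand produced by $R^{I}_{I[1]}$, yielding the claimed identity.

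The subtlest step is the justification in (iii) that \eqref{eqnaveraging}, originally stated for meromorphic $\Phi$, extends cleanly to the quasi-elliptic integrand $R^{I}_{I[1]}(Z_{i_1}^m/m!\cdot\Phi)$ obtained after the intermediate residues, and that the holomorphic limit commutes with both the residue operators and the outer regularized integral in the required manner. A secondary technical nuisance, in part (i), is the weight bookkeeping forced by the Bernoulli-polynomial tails of $\Delta^{-1}Z^\ell$, which is ultimately responsible for the non-strict bound $\leq k$.
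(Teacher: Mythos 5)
Your parts (i) and (ii) follow exactly the paper's route: the paper proves them by declaring that the proofs of Lemma \ref{lempreservingalmostellipticity} and Proposition \ref{propresidueformulaforregularizedintegral} carry over with $\widehat{Z}$ replaced by $Z$, $\partial_{\widehat{Z}}$ replaced by $\Delta$, and Lemma \ref{lemregularizedintegralintermsofresidue} replaced by Lemma \ref{lemAcycleintegralofquasiellipticintermsofresidue}; your weight bookkeeping for the non-strict bound $\leq k$ via the inhomogeneity of the Bernoulli polynomials in Lemma \ref{lemexistenceofDeltainverse} is exactly the point the paper makes later in the proof of Proposition \ref{propweightofoperators} (i). Where you genuinely diverge is part (iii): the paper simply cites Oberdieck--Pixton \cite[Proposition 10]{Oberdieck:2018}, whereas you derive it by taking the holomorphic limit of Corollary \ref{corinductiveresidueformulaforergularizedintegral} with $F_I=\widehat{Z}_{i_1}^m/m!$ and applying \eqref{eqnaveraging} to both the outer integral and the pieces $\dashint_{E_{I^c}}R^{I}_{I[1]}(\Phi F_I)$. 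This is precisely the reversal the paper itself sanctions in Remark \ref{remequivalence} (granting Corollary \ref{corinductiveresidueformulaforergularizedintegral}, part (iii) is equivalent to Proposition \ref{propweightofoperators} (ii)), and the "subtlest step" you flag --- that $\lim_{\mathbold{Y}=0}R^{I}_{I[1]}(\Phi F_I)=R^{I}_{I[1]}(\lim_{\mathbold{Y}=0}\Phi\cdot Z_{i_1}^m/m!)$ because the residues kill the $\bar z$-terms by \eqref{eqantiholomorphicresidue} while the remaining $\mathbold{Y}z_{i_k,i_{k+1}}$-terms die in the holomorphic limit --- is worked out verbatim as \eqref{eqmcommutinglimit} in the paper's proof of Proposition \ref{propweightofoperators} (ii). The one caveat you should make explicit is non-circularity: since the paper's Section 4 proof of Proposition \ref{propweightofoperators} (ii) itself invokes part (iii), your argument must take \eqref{eqnaveraging} as the externally established result \cite[Theorem 3.4 (2)]{Li:2020regularized} (or use the independent proof in Appendix \ref{secholomorphimilitofregularized}); with that understood, your route buys a self-contained derivation of the Oberdieck--Pixton identity rather than an appeal to it, at the cost of importing the holomorphic-limit theorem from the earlier paper.
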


\begin{proof}
\begin{enumerate}[(i).]
\item
The proof of the claim
%, except for the mixed weight part,
is similar to that of Lemma \ref{lempreservingalmostellipticity},
with the generator $\widehat{Z}_{a}=\widehat{Z}(z_{a}-z_{n})$ replaced
by  $Z_{a}=Z(z_{a}-z_{n})$, and Lemma
\ref{lemregularizedintegralintermsofresidue} replaced by Lemma
\ref{lemAcycleintegralofquasiellipticintermsofresidue}.
\item
The proof of Proposition
\ref{propresidueformulaforregularizedintegral}
carries sentence by sentence, with the generator $\widehat{Z}_{a}=\widehat{Z}(z_{a}-z_{n})$ replaced
by  $Z(z_{a}-z_{n})$, the
differential operator
$\partial_{\widehat{Z}}$ replaced by the difference operator $\Delta$,
and
Lemma \ref{lemregularizedintegralintermsofresidue}
replaced by Lemma \ref{lemAcycleintegralofquasiellipticintermsofresidue}.
That the last integration is trivial follows from (i) that
ordered $A$-cycle integrals preserve quasi-ellipticity and hence before the last integration the result is independent of $z$.
The independence on the choice of  the $\Delta$-primitive again follows from the global residue theorem, as mentioned earlier in Remark \ref{remindependenceofantiderivatives}.
\item
This is
 \cite[Proposition 10]{Oberdieck:2018}.
 %%%%
 %%%
 \iffalse
 \footnote{
%  See also  \cite[Proposition 9]{Oberdieck:2018} for a recursive formula for ordered $A$-cycle integrals
% without the averaging.
%  Alternative proofs of both \cite[Proposition 9, Proposition 10]{Oberdieck:2018} are given in Remark \ref{remorderedAcycleintegralsoverchainI} Appendix \ref{seclabeledrootedforests} based on
%  a finer study on the combinatorial properties of the residue formulas.
See Appendix \ref{secholomorphimilitofregularized} for a different proof based on the combinatorial properties of the residue formulas developed in this work.
 }
 \fi
 %%%
 %%%
\end{enumerate}
\end{proof}

\subsection{Relation between ordered $A$-cycle integrals and regularized integrals}

Let $\Phi \in \mathcal{J}_{n,k}$, then $\int_{A_{[n]}}\Phi$ is of mixed weight with leading weight $k$.
This mixed-weight phenomenon for ordered $A$-cycle integrals was firstly discovered in \cite[Theorem 5.8]{Goujard:2016counting}, and later
proved in \cite[Theorem 7 (1)]{Oberdieck:2018} by other methods.
We have also offered a different proof using our notion of regularized integrals in our earlier work \cite[Theorem 3.9 (2)]{Li:2020regularized}.
In fact, more is true.
It is shown in \cite[Theorem 3.4 (2)]{Li:2020regularized} that the
average of ordered $A$-cycle integrals over orderings
coincides with the holomorphic limit of the corresponding regularized integral. In particular,
the average has
pure weight, as previously shown in \cite[Theorem 7 (2)]{Oberdieck:2018}. Furthermore, any ordered $A$-cycle integral admits a nice combinatorial
formula \cite[Theorem 3.9 (1)]{Li:2020regularized} in terms of holomorphic limits of various regularized integrals,  providing
explicit formulas for the components of various weights.

 We now offer more conceptual and shorter proofs of part of these results, based on
the residue formulas in  Proposition
\ref{propresidueformulaforregularizedintegral}, Corollary
\ref{corinductiveresidueformulaforergularizedintegral}  and Proposition
\ref{propresidueformulasforAcycleintegral}.

\begin{prop}\label{propweightofoperators}
The following statements hold.
\begin{enumerate}[(i).]

 \item \cite[Theorem 3.4 (1) (3), Theorem 3.9 (2)]{Li:2020regularized}
The regularized integral $\dashint_{E_{[n]}}$ of an almost-elliptic function $\widehat{\Psi}\in\widehat{\mathcal{J}}_{[n],k}$  is almost-elliptic of pure weight $k$.
While
any ordered $A$-cycle integral of a quasi-elliptic function $\widetilde{\Psi}\in\widetilde{\mathcal{J}}_{[n],k}$ is quasi-elliptic of mixed weight,
with leading weight $k$.

\item
 \cite[Theorem 3.4 (2)]{Li:2020regularized}
 Let $\Phi$ be an element in
$\mathcal{J}_{n}\otimes \mathbb{C}[\widehat{E}_{2}]$.
Then
one has
\[
\lim_{\mathbold{Y}=0}\dashint_{E_{[n]}}\Phi
=
{1\over n!}
 \sum_{\sigma\in \mathfrak{S}_{n}}\int_{A_{\sigma([n])}}\lim_{\mathbold{Y}=0}\Phi\,.
\]
Here $\lim_{\mathbold{Y}=0}$ is the holomorphic limit map on $\widehat{\mathcal{J}}_{n}\subseteq \widetilde{\mathcal{J}}_{n}\otimes \mathbb{C}[\mathbold{A}_{ij}, \mathbold{Y}]$,
obtained by taking the degree-0 term in $\mathbold{Y}$.

\end{enumerate}
\end{prop}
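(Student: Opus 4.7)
I would prove the two parts separately by tracking weights through, and directly comparing, the residue formulas established earlier in the paper.

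\textbf{Part (i).} For the regularized integral, almost-ellipticity of $\dashint_{E_{[n]}}\widehat{\Psi}$ is already Lemma~\ref{lempreservingalmostellipticity}, so only purity of weight remains. Decompose $\widehat{\Psi}=\sum_k \widehat{\Psi}_k \widehat{Z}_1^{k}$ with $\widehat{\Psi}_k$ independent of $\widehat{Z}_1$ and of weight $w(\widehat{\Psi})-k$. Lemma~\ref{lemregularizedintegralintermsofresidue} gives
\[
\dashint_{E_1}\widehat{\Psi}=\sum_k \tfrac{1}{k+1}\Res\bigl(\widehat{\Psi}_k \widehat{Z}_1^{k+1}\,dz_1\bigr).
\]
The integrand has homogeneous weight $w(\widehat{\Psi})+1$, and extracting the residue (the coefficient of $z_1^{-1}$, where $w(z_1)=-1$) drops the weight by one, yielding $w(\widehat{\Psi})$. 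Iterating $n$ times preserves purity. For the A-cycle case, apply Lemma~\ref{lemAcycleintegralofquasiellipticintermsofresidue}: $\int_A \widetilde{\Psi}=\sum_k \Res(\widetilde{\Psi}_k \cdot \Delta^{-1}Z^k)$. By Lemma~\ref{lemexistenceofDeltainverse}, $\Delta^{-1}Z^k$ is proportional to the Bernoulli polynomial $B_{k+1}(Z/2\pi i)$, whose leading monomial in $Z$ agrees (up to constant) with the $\partial_Z^{-1}$-primitive $Z^{k+1}/(k+1)$ and thus reproduces the leading weight $w(\widetilde{\Psi})$; the lower Bernoulli monomials contribute strictly lower weights. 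Iterating in $n$ yields mixed weight with leading weight $w(\widetilde{\Psi})$.

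\textbf{Part (ii).} My plan is induction on $n$, directly matching the parallel residue formulas of Corollary~\ref{corinductiveresidueformulaforergularizedintegral} and Proposition~\ref{propresidueformulasforAcycleintegral}(iii). The base case $n=1$ reduces to $\lim_{\mathbf{Y}=0}\dashint_E \Phi=\int_A \Phi$ for elliptic $\Phi$, which follows by comparing the two residue formulas and noting that any discrepancy between $\widehat{Z}|_{\mathbf{Y}=0}=Z$ and the $\Delta^{-1}(1)$-primitive used in the A-cycle formula is an elliptic function whose total residue vanishes, via the global residue theorem (Remark~\ref{remindependenceofantiderivatives}). For the inductive step, apply Corollary~\ref{corinductiveresidueformulaforergularizedintegral}:
\[
\dashint_{E_{[n]}}\Phi=\dashint_{E_n}\sum_{I}\dashint_{E_{[n-1]-I}} R^{(i_m)}_n\circ\cdots\circ R^{(1)}_{i_2}\Bigl(\Phi\cdot\tfrac{\widehat{Z}_{i_1}^{m}}{m!}\Bigr).
\]
The outermost $\dashint_{E_n}$ is trivial since the integrand is independent of $z_n$ after the chain of residues. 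Applying $\lim_{\mathbf{Y}=0}$ substitutes $\widehat{Z}_{i_1}^{m}\mapsto Z_{i_1}^{m}$ and $\widehat{E}_2\mapsto E_2$; by the induction hypothesis each inner $\dashint_{E_{[n-1]-I}}$ becomes the symmetric average $\tfrac{1}{(n-1-|I|)!}\sum_\tau \int_{A_{\tau(I^c-\{n\})}}$, giving exactly the right-hand side of Proposition~\ref{propresidueformulasforAcycleintegral}(iii), which equals $\tfrac{1}{n!}\sum_\sigma \int_{A_{\sigma([n])}}\lim_{\mathbf{Y}=0}\Phi$.

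The main obstacle lies in making this inductive step rigorous: one must justify that $\lim_{\mathbf{Y}=0}$ commutes with each inner $\dashint_{E_{[n-1]-I}}$. This is not automatic, because the input to $\dashint_{E_{[n-1]-I}}$ is almost-elliptic (it still contains the factor $\widehat{Z}(z_{i_1}-z_n)$) rather than lying in $\mathcal{J}\otimes\mathbb{C}[\widehat{E}_2]$ as the induction hypothesis requires. The resolution is to expand the input further as a polynomial in the surviving $\widehat{Z}(z_{i_k}-z_n)$-generator using the addition formula \eqref{eqnapplyingadditionformula}, apply the induction hypothesis to each elliptic coefficient, and reassemble; the vanishing identity \eqref{eqantiholomorphicresidue} together with the ring-homomorphism property of $\lim_{\mathbf{Y}=0}$ ensures that this expansion is compatible with both the residue operations and the limit. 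In this way the commutation reduces to the $n=1$ case, closing the induction.
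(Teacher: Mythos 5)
Your part (i) is correct and follows essentially the same route as the paper: both arguments reduce to the residue formulas (Lemma \ref{lemregularizedintegralintermsofresidue} and Lemma \ref{lemAcycleintegralofquasiellipticintermsofresidue}) and track weights through the fact that $\partial_{\widehat{Z}}^{-1}$ is pure of weight $+1$ while the residue operator has weight $-1$, whereas $\Delta^{-1}$ is of mixed weight with the same leading term by Lemma \ref{lemexistenceofDeltainverse}.

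For part (ii) your overall strategy --- induction on $n$, matching Corollary \ref{corinductiveresidueformulaforergularizedintegral} against Proposition \ref{propresidueformulasforAcycleintegral} (iii) --- is exactly the paper's, and you correctly isolate the crux: commuting $\lim_{\mathbold{Y}=0}$ past the inner regularized integrals. But your diagnosis of the difficulty, and your proposed fix, are both off. The input to $\dashint_{E_{[n-1]-I}}$ is $R^{I}_{I[1]}(\Phi F_{I})$ with the \emph{entire} residue chain already applied, including the final residue at $z_{i_m}=z_n$. Since at every step of the chain the anti-holomorphic contributions (the $\mathbold{A}$-parts of $\widehat{Z}_{i_k,i_{k+1}}$ and of $\widehat{Z}_{i_m}$) are killed by \eqref{eqantiholomorphicresidue}, no $\widehat{Z}$-generator survives at all: $R^{I}_{I[1]}(\Phi F_{I})$ lies in $\mathcal{J}_{[n]-I}\otimes\mathbb{C}[\widehat{E}_{2}]$, so the induction hypothesis applies verbatim and no expansion in "surviving $\widehat{Z}$-generators" is needed.

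More importantly, the workaround you propose would not constitute a proof even if such generators did survive. The induction hypothesis (and indeed the statement being proved) is \emph{false} for almost-elliptic inputs: for instance $\lim_{\mathbold{Y}=0}\dashint_{E}\varphi\,\widehat{Z}^{k}=\tfrac{1}{k+1}\Res(\varphi Z^{k+1})$, whereas $\int_{A}\varphi\, Z^{k}$ carries additional lower-weight Bernoulli corrections coming from $\Delta^{-1}Z^{k}$ in Lemma \ref{lemexistenceofDeltainverse}. So one cannot "apply the induction hypothesis to each elliptic coefficient and reassemble". What actually has to be established --- and what the paper proves as a preparatory step --- is the commutation of the holomorphic limit with the residue chain itself, namely $\lim_{\mathbold{Y}=0}R^{I}_{I[1]}(\Phi F_{I})=R^{I}_{I[1]}\bigl(\lim_{\mathbold{Y}=0}\Phi\cdot Z_{i_1}^{m}/m!\bigr)$ as in \eqref{eqmcommutinglimit}: writing $\widehat{Z}_{i_k,i_{k+1}}=Z_{i_k,i_{k+1}}+\mathbold{Y}(\overline{z}_{i_k,i_{k+1}}-z_{i_k,i_{k+1}})$ at each step, the $\mathbold{Y}\overline{z}$ terms vanish by \eqref{eqantiholomorphicresidue}, and the $\mathbold{Y}z$ terms contribute only to degree $\geq 1$ in $\mathbold{Y}$ and hence drop out in the holomorphic limit. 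Your write-up gestures at \eqref{eqantiholomorphicresidue} but does not contain this argument, so the inductive step as written has a genuine gap.
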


\begin{proof}
\begin{enumerate}[(i).]
\item
Let as before $D_{i}=\partial_{\widehat{Z}_{i}}$ and  $\Delta_{i}$ be the difference operator defined as in
\eqref{eqndfnDelta} with respect to $Z_i$.
Comparing Proposition
\ref{propresidueformulaforregularizedintegral} with
Proposition
\ref{propresidueformulasforAcycleintegral} (2), we see that the regularized integral $ \dashint_{E_{[n]}}\widehat{\Psi}$ and ordered $A$-cycle integral
 $\int_{A_{[n]}}\widetilde{\Psi}$
are sums of results obtained from the action of the same residue operators on different functions $F_{\mathbold{r}}(\widehat{\Psi}),G_{\mathbold{r}}(\widetilde{\Psi})$:
\begin{eqnarray*}
R^{[n-1]}_{\mathbold{r}}F_{\mathbold{r}}(\widehat{\Psi})&=&R^{(n-1)}_{r_{n-1}}
D_{n-1}^{-1}\cdots
R^{(2)}_{r_{2}}D_{2}^{-1} R^{(1)}_{r_{1}}D_{1}^{-1}\widehat{\Psi}\,,\\
R^{[n-1]}_{\mathbold{r}}G_{\mathbold{r}}(\widetilde{\Psi})&=&R^{(n-1)}_{r_{n-1}}
{\Delta}_{n-1}^{-1}\cdots
R^{(2)}_{r_{2}}{\Delta}_{2}^{-1} R^{(1)}_{r_{1}}{\Delta}_{1}^{-1}\widetilde{\Psi}\,.
\end{eqnarray*}
That the operator $D_{i}^{-1}$ is of pure weight $1$ follows from its definition and the polynomial structure  in $\widehat{Z}_{i}$ of
almost-elliptic functions given in Definition \ref{dfnalmostelliptic}. This tells that $R^{[n-1]}_{\mathbold{r}}F_{\mathbold{r}}(\widehat{\Psi})$
has pure weight $k$ since the residue operator has pure weight $-1$.
That
$\Delta_{i}^{-1}$ is of mixed weight and its leading weight is that of $D_{i}^{-1}$
follow from Lemma \ref{lemexistenceofDeltainverse} (cf. \cite[Proposition 5.9]{Goujard:2016counting}).

\item

We shall prove the desired claim by induction on $n$, based on Corollary
\ref{corinductiveresidueformulaforergularizedintegral} and Proposition \ref{propresidueformulasforAcycleintegral} (iii)
 proved by Oberdieck-Pixton in \cite[Proposition 10]{Oberdieck:2018}.

We start by proving a preparatory result.
Let  $I=(i_1=1,\cdots, i_{m})$ be the chain introduced in Corollary
\ref{corinductiveresidueformulaforergularizedintegral} and $R^{I}_{I[1]}=R^{(i_{m})}_{n}\circ \cdots\circ R^{(1)}_{i_{2}}$.
Note that
$F_{I}$ is a polynomial in $\widehat{Z}_{i_{k},i_{k+1}},1\leq k\leq m$ with coefficients in $\mathcal{J}_{n}$.
In the evaluation of the $R^{(i_{k})}_{i_{k+1}}$-step of $R^{I}_{I[1]} (\Phi F_{I})$, we expand the non-holomorphic quantity $\widehat{Z}_{i_{k},i_{k+1}}$ appearing in $F_{I}$ as
\[\widehat{Z}_{i_{k},i_{k+1}}=Z(z_{i_{k}}-z_{n})-Z(z_{i_{k+1}}-z_{n})+\mathbold{Y} (\overline{z}_{i_{k},i_{k+1}}-z_{i_{k},i_{k+1}})\,.\]
The contribution of terms involving $\mathbold{Y} \overline{z}_{i_{k},i_{k+1}}$
to $R^{I}_{I[1]} (\Phi F_{I})$ is zero by \eqref{eqantiholomorphicresidue}.
The same reasoning using the chain structure of $I$ tells that $R^{I}_{I[1]} (\Phi F_{I})$ is meromorphic in $z$.
In particular, by
Lemma \ref{lempreservingalmostellipticity}  we have
$R^{I}_{I[1]} (\Phi F_{I})\in \mathcal{J}_{[n]-I}\otimes \mathbb{C}[\widehat{E}_{2}]$.
Furthermore, the contribution of terms involving $\mathbold{Y} z_{i_{k},i_{k+1}}$ to $R^{I}_{I[1]} (\Phi F_{I})$ is a polynomial in $\mathbold{Y}$ of degree $\geq 1$ with coefficients lying in
$\widetilde{\mathcal{J}}_{[n]-I}$, and thus vanishes in the holomorphic limit.
It follows that
\[
\lim_{\mathbold{Y}=0} R^{I}_{I[1]} (\Phi F_{I})=R^{I}_{I[1]} (\lim_{\mathbold{Y}=0}\Phi \cdot F_{I}|_{\mathbold{A}_{ij}=0})\,.
\]
We have shown earlier in Corollary
\ref{corinductiveresidueformulaforergularizedintegral}
 that $F_{I}$ can be taken to be
$\widehat{Z}_{i_{1}}^{m}/m!$,
thus
\begin{equation}\label{eqmcommutinglimit}
\lim_{\mathbold{Y}=0} R^{I}_{I[1]} (\Phi F_{I})=R^{I}_{I[1]} (\lim_{\mathbold{Y}=0}\Phi \cdot {Z_{i_{1}}^{m}\over m!})\,.
\end{equation}

The desired claim is true for $n=2,3$ by straightforward computations.
Assuming now the statement holds for $\leq n-1$ iterated regularized integrals.
From
Corollary
\ref{corinductiveresidueformulaforergularizedintegral} one has  (note the triviality of the last integration  $\dashint_{E_{n}}$)
\[\lim_{\mathbold{Y}=0}\dashint_{E_{[n]}}\Phi
=
 \dashint_{E_{n}}\lim_{\mathbold{Y}=0}
\sum_{\substack{I:\,I\subseteq [n-1]\\i_{1}=1}}\dashint_{E_{[n-1]-I}} R^{I}_{I[1]} (\Phi F_{I})\,.
\]
By the fact $R^{I}_{I[1]} (\Phi F_{I})\in \mathcal{J}_{[n]-I}\otimes \mathbb{C}[\widehat{E}_{2}]$
and the induction hypothesis, we  obtain
\begin{eqnarray*}
&&\lim_{\mathbold{Y}=0}\dashint_{E_{[n]}}\Phi
=\dashint_{E_{n}}
 \sum_{\substack{I:\, |I|\subseteq n-1\\ i_{1}=1}}
 {1\over (n-1-|I|)!} \sum_{\tau\in \mathfrak{S}_{[n-1]-I}}\int_{A_{\tau(I^{c})}} \lim_{\mathbold{Y}=0} R^{I}_{I[1]} (\Phi F_{I})\,,
\end{eqnarray*}
where $\mathfrak{S}_{[n-1]-I}$ is the permutation group of the
index set $I^{c}:=[n-1]-I$, and $A_{\tau(I^{c})}$
is the corresponding iterated $A$-cycle ordered by the permutation $\tau$.
From \eqref{eqmcommutinglimit},
it follows that
\begin{eqnarray*}\lim_{\mathbold{Y}=0}\dashint_{E_{[n]}}\Phi
=\dashint_{E_{n}}
 \sum_{\substack{I:\, |I|\subseteq n-1\\ i_{1}=1}}
 {1\over (n-1-|I|)!} \sum_{\tau\in \mathfrak{S}_{[n-1]-I}}\int_{A_{\tau(I^{c})}}R^{I}_{I[1]} ( \lim_{\mathbold{Y}=0} \Phi \cdot {Z_{i_{1}}^{m} \over m!})\,.
\end{eqnarray*}
Applying Proposition \ref{propresidueformulasforAcycleintegral} (iii)% proved by Oberdieck-Pixton in \cite[Proposition 10]{Oberdieck:2018}
, the summation  on the right hand side is exactly
\[
\dashint_{E_{n}}{1\over (n-1)!}\sum_{\sigma\in \mathfrak{S}_{n-1}}\int_{A_{\sigma([n-1])}}\lim_{\mathbold{Y}=0}\Phi
=
\int_{A_{n}}{1\over (n-1)!}\sum_{\sigma\in \mathfrak{S}_{n-1}}\int_{A_{\sigma([n-1])}}\lim_{\mathbold{Y}=0}\Phi
=
{1\over n!}\sum_{\sigma\in \mathfrak{S}_{n}}\int_{A_{\sigma([n])}}\lim_{\mathbold{Y}=0}\Phi\,.\]
Here we have used the triviality of the last integrations $\int_{A_{n}}, \dashint_{E_{n}}$, and
the cyclic symmetry \cite[Remark 3.6]{Li:2020regularized} of ordered $A$-cycle integrals.
Hence the desired statement holds for the $n$ case
and the proof is  complete.

\end{enumerate}
\end{proof}

\begin{rem}\label{remequivalence}
The above reasoning in the proof of Proposition \ref{propweightofoperators} (iii) can be reversed.
Therefore, with Corollary \ref{corinductiveresidueformulaforergularizedintegral} granted,
 Proposition
\ref{propweightofoperators} (ii)  is actually equivalent to Proposition \ref{propresidueformulasforAcycleintegral} (iii).
 \end{rem}

Proposition \ref{propweightofoperators} establishes the connection between
regularized integrals and averaged ordered $A$-cycle integrals.
Based on this connection, Lemma \ref{lempreservingalmostellipticity},
%Proposition \ref{propresidueformulasforAcycleintegral},
and the isomorphism  \eqref{eqnhollimitmap},
one can show that the holomorphic anomaly equation \eqref{eqnHAE} in  Theorem \ref{thmHAE} for regularized integrals
is equivalent to the one
\cite[Theorem 7 (3)]{Oberdieck:2018} for averaged ordered $A$-cycle integrals.
%%%%
%%%% the following gives the proof.
The details are as follows. Let $\Phi\in \mathcal{J}_{n}\otimes \mathbb{C}[\widehat{E}_{2}]$.
By the polynomial structure of $\dashint_{E_{[n]}}\Phi$ in $\widehat{E}_{2}$ given in Lemma \ref{lempreservingalmostellipticity},
it  is direct to see that
\[
 \lim_{\mathbold{Y}=0}\,\partial_{\mathbold{Y}}\dashint_{E_{[n]}}\Phi=\partial_{\eta_{1}} \lim_{\mathbold{Y}=0}
 \,\dashint_{E_{[n]}}\Phi\,,\quad \eta_{1}={\pi^2\over 3}E_{2}\,.
\]
By Proposition \ref{propweightofoperators} (ii), one then has
% note the polynomial structure of the right hand side in $E_{2}$ given in Proposition \ref{propresidueformulasforAcycleintegral} (i),
\[
 \lim_{\mathbold{Y}=0}\,\partial_{\mathbold{Y}}\dashint_{E_{[n]}}\Phi=\partial_{\eta_{1}} \left(
 {1\over n!}
 \sum_{\sigma\in \mathfrak{S}_{n}}\int_{A_{\sigma([n])}}\lim_{\mathbold{Y}=0}\Phi\right)\,.
 \]
 Similarly,
 \[  \lim_{\mathbold{Y}=0}\,\dashint_{E_{[n]}}\partial_{\mathbold{Y}}\Phi=
  {1\over n!}
 \sum_{\sigma\in \mathfrak{S}_{n}}\int_{A_{\sigma([n])}}\lim_{\mathbold{Y}=0}\partial_{\mathbold{Y}} \Phi
 =
   {1\over n!}
 \sum_{\sigma\in \mathfrak{S}_{n}}\int_{A_{\sigma([n])}}\partial_{\eta_1} \lim_{\mathbold{Y}=0} \Phi
 \,.
\]
\begin{eqnarray*}
&&
 \lim_{\mathbold{Y}=0}\left(-\sum_{a,b:\,a< b} \dashint_{E_{[n]-\{a\}}} R^{(a)}_{b}( (z_{a}-z_{b})\Phi)\right)\nonumber\\
 &=&
-\sum_{a,b:\,a< b} {1\over (n-1)!}  \sum_{\sigma\in \mathfrak{S}_{n-1}}\int_{A_{\sigma([n]-\{a\})}} \lim_{\mathbold{Y}=0} R^{(a)}_{b}( (z_{a}-z_{b})\Phi)\,\nonumber\\
 &=&
-\sum_{a,b:\,a< b} {1\over (n-1)!}  \sum_{\sigma\in \mathfrak{S}_{n-1}}\int_{A_{\sigma([n]-\{a\})}} R^{(a)}_{b}( (z_{a}-z_{b})\lim_{\mathbold{Y}=0} \Phi)\,.
\end{eqnarray*}
In the last equality we have used the fact that $\Phi$ is meromorphic in $z$.
Taking the holomorphic limit of  \eqref{eqnHAE}, one then has
\begin{eqnarray*}
\partial_{\eta_{1}} \left(
 {1\over n!} \sum_{\sigma\in \mathfrak{S}_{n}}\int_{A_{\sigma([n])}}\lim_{\mathbold{Y}=0}\Phi\right)
 &=&   {1\over n!}
 \sum_{\sigma\in \mathfrak{S}_{n}}\int_{A_{\sigma([n])}}\partial_{\eta_1} \lim_{\mathbold{Y}=0} \Phi\nonumber\\
& -&\sum_{a,b:\,a< b} {1\over (n-1)!}  \sum_{\sigma\in \mathfrak{S}_{n-1}}\int_{A_{\sigma([n]-\{a\})}} R^{(a)}_{b}( (z_{a}-z_{b})\lim_{\mathbold{Y}=0} \Phi)\,.
\end{eqnarray*}
This is the anomaly equation \cite[Theorem 7 (3)]{Oberdieck:2018} for the averaged ordered $A$-cycle integrals of the quasi-elliptic function $\lim_{\mathbold{Y}=0} \Phi\in \mathcal{J}_{n}\otimes\mathbb{C}[E_{2}]$.
The converse implication follows from the fact that the holomorphic limit map
$ \lim_{\mathbold{Y}=0}$ in \eqref{eqnhollimitmap} induces isomorphisms $\mathbb{C}(E_{4},E_{6})[\widehat{E}_{2}]\cong
\mathbb{C}(E_{4},E_{6})[{E}_{2}],
\mathcal{J}_{n}\otimes\mathbb{C}[\widehat{E}_{2}]\cong \mathcal{J}_{n}\otimes\mathbb{C}[E_{2}]$.\\

%%%%
%%%%

The relations \cite[Theorem 3.4, Theorem 3.9]{Li:2020regularized} between regularized integrals and ordered $A$-cycle integrals (with or without averaging)
follow from the Poincar\'e-Birkhoff-Witt Theorem relating
tensor algebra and universal enveloping algebra of free Lie algebra.
It seems interesting to derive these relations based on
the combinatorial relations between the operators $R^{[n-1]}_{\mathbold{r}}F_{\mathbold{r}}(-)$ and
$R^{[n-1]}_{\mathbold{r}}G_{\mathbold{r}}(-)$.
See the discussions in Appendix \ref{appendixcombinatorial} for instance.
Another interesting direction is to
study the connection to the Baker--Campbell--Hausdorff--Dynkin formula and Chen's iterated integrals.

\begin{appendices}

\section{Some combinatorial properties of residue formulas}
\label{appendixcombinatorial}

We discuss some combinatorial properties of residue formulas and their applications in this Appendix.

\subsection{Further commutation relations of residue operators}

We first give an extension of Lemma \ref{lemcommutatorresidue}.

\begin{lem}\label{lemcommutatorresiduec=0}
Introduce the notation $R^{(a)}_{0}=\int_{A} dz_{a}$.
Then one has the following identities of operators acting on quasi-elliptic functions in $z$
\begin{enumerate}[(i).]
\item
\begin{equation}\label{eqncommutatorofR0}
R^{(a)}_{c}R^{(b)}_{c}-R^{(b)}_{c}R^{(a)}_{c}=R^{(b)}_{c}R^{(a)}_{b}
=-R^{(a)}_{c}R^{(b)}_{a}\,,\quad
a,b\in [n]\,,~ c\in[n] \cup\{0\}\,.
\end{equation}
\item
\begin{equation}\label{eqntrivialcommutatorofR0}
R^{(e)}_{c}R^{(a)}_{b}=R^{(a)}_{b}R^{(e)}_{c}\,, \quad \mathrm{if}\,
\quad a,b,e\in [n]\,, c\in [n]\cup \{0\}\,,\quad  \{a,b\}\cap \{c,e\}=\emptyset\,.
\end{equation}
\item When acting on almost-elliptic functions in $z$, one has
\begin{equation}\label{eqnArnold}
R^{(b)}_{c}R^{(a)}_{b}+R^{(c)}_{a}R^{(b)}_{c}+R^{(a)}_{b}R^{(c)}_{a}=0\,, \quad \mathrm{if}\,
\quad a,b,c\in [n]\quad \text{are distinct}\,.
\end{equation}
\end{enumerate}
Here the various operators are understood to be composed in the natural operator ordering.
 %In particular, operator ordering on the right hand side of \eqref{eqntrivialcommutatorofR0} is obtained from the one on the left hand side by switching the orders of $a,e$.

\end{lem}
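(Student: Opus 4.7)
The proof splits into two essentially independent parts: extending Lemma \ref{lemcommutatorresidue} to allow the index $c=0$ (parts (i) and (ii)), and establishing the new Arnold-type identity (iii).

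For the $c=0$ cases, the plan is to exploit the fact that an ordered $A$-cycle integral is itself a sum of residues. Applying Lemma \ref{lemAcycleintegralofquasiellipticintermsofresidue} in the variable $z_a$ one can write $R^{(a)}_0 = \mathrm{Res}_a \circ \Delta_a^{-1}$, where $\Delta_a^{-1}$ is a $\Delta$-primitive in $z_a$ (producing a quasi-elliptic function by Lemma \ref{lemexistenceofDeltainverse}) and $\mathrm{Res}_a = \sum_{r\neq a} R^{(a)}_r$ is the sum of diagonal residues in a fundamental domain of $z_a$. Since $\Delta_a^{-1}$ acts only on the $z_a$ variable and the residue operators on the right-hand sides of \eqref{eqncommutatorofR0} and \eqref{eqntrivialcommutatorofR0} involve only variables disjoint from $\{a\}$ after suitable reorganization, the identities reduce term by term to the corresponding statements of Lemma \ref{lemcommutatorresidue}. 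Equivalently, one has a purely geometric proof via contour deformation: moving one $A$-cycle past another in the imaginary direction picks up (up to the standard $2\pi i$ normalization) the residue along the intervening diagonal, which is the content of \eqref{eqncommutatorofR0}, while \eqref{eqntrivialcommutatorofR0} with $c=0$ reduces to ordinary Fubini when the variable sets $\{a,b\}$ and $\{c,e\}$ are disjoint.

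For the Arnold identity (iii) the plan is to reduce to a local computation on Laurent monomials. By linearity and the polynomial structure of almost-elliptic functions in the generators $\widehat{Z}_{ij}$, together with the vanishing \eqref{eqantiholomorphicresidue} of residues of antiholomorphic factors, it suffices to verify \eqref{eqnArnold} on meromorphic Laurent monomials in the three differences $z_{ab}$, $z_{bc}$, $z_{ca}$. Using the linear relation $z_{ab} + z_{bc} + z_{ca} = 0$, I would introduce local coordinates $u = z_a - z_b$, $v = z_b - z_c$ near the triple diagonal $z_a = z_b = z_c$, so that $z_c - z_a = -u-v$, and express each of the three iterated residues in \eqref{eqnArnold} as a two-fold Laurent-coefficient extraction. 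The three cyclic cases then expand the shared factor $(-u-v)^r$ in three different Laurent regimes (small $u$, small $v$, or small $u+v$), and the three resulting sums of binomial coefficients cancel by a standard identity.

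The main obstacle is the combinatorial check in part (iii). The three iterated residues genuinely use three different Laurent regimes of the same rational function, so the cancellation is not evident from the naive formulas. The conceptual reason is that these three operators encode the three cyclic orderings of iterated residues along the three divisors meeting at the codimension-two stratum $\{z_a = z_b = z_c\}$, and their sum is the obstruction to a coherent iterated residue in a neighborhood of the triple point. This obstruction vanishes by a Stokes-type argument on a small link of the triple point, morally dual to the classical Arnold relation $\omega_{ab}\wedge\omega_{bc} + \omega_{bc}\wedge\omega_{ca} + \omega_{ca}\wedge\omega_{ab} = 0$ for the logarithmic forms $\omega_{ij} = d\log(z_i - z_j)$ on the configuration space. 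Making this duality precise at the level of operators on almost-elliptic functions is expected to be the bulk of the work.
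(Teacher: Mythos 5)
Your proposal matches the paper's own proof: for the $c=0$ case of (i) the paper also invokes the standard contour-deformation argument, for (ii) it writes $R^{(e)}_{0}$ as a sum of diagonal residues via the global residue theorem and then commutes using Lemma \ref{lemcommutatorresidue} (your ``suitable reorganization'' is exactly the cancellation of the cross terms $R^{(e)}_{a}$, $R^{(e)}_{b}$ through \eqref{eqncommutatorofR}), and for (iii) it likewise reduces to a local Laurent-monomial computation as in Lemma \ref{lemcommutatorresidue}. The paper does not write out the binomial-coefficient cancellation for (iii) either, so your plan contains everything the published argument does.
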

\begin{proof}
The first two identities with $c\neq 0$ are covered in Lemma \ref{lemcommutatorresidue}.
The first identity \eqref{eqncommutatorofR0} with $c=0$ is proved by using the standard integral contour deformation argument on the complex plane.
For the identity \eqref{eqntrivialcommutatorofR0},
by the  global residue theorem
 and Lemma \ref{lemcommutatorresidue} one has
\begin{eqnarray*}
R^{(e)}_{0}R^{(a)}_{b}
&=&-(\sum_{\substack{i:\, e\prec i\\ i\neq b}} R^{(e)}_{i}
+R^{(e)}_{b})
R^{(a)}_{b}\nonumber\\
&=&-R^{(a)}_{b} \sum_{\substack{i:\, e\prec i\\ i\neq b }} R^{(e)}_{i}
-R^{(a)}_{b}R^{(e)}_{b}-R^{(a)}_{b}R^{(e)}_{a}+R^{(a)}_{b}R^{(e)}_{b}+R^{(a)}_{b}R^{(e)}_{a}-R^{(e)}_{b}
R^{(a)}_{b}\nonumber\\
&=&R^{(a)}_{b}  R^{(e)}_{0}\,.
\end{eqnarray*}
Here in the first equality we have used the fact that $R^{(a)}_{b}$ preserves meromorphicity,
so that the global residue theorem can be applied to $R^{(e)}_{0}R^{(a)}_{b}$.
The identity \eqref{eqnArnold} is proved by a local calculation in a similar way as in Lemma \ref{lemcommutatorresidue}.
\end{proof}

The special case  with $c=0$ in
\eqref{eqntrivialcommutatorofR0} reads that
\[
R^{(a)}_{b}\int_{A_{i}}=\int_{A_{i}}R^{(a)}_{b} \,\quad
\text{for distinct}~a,b,i\,,~\text{when acting on quasi-elliptic functions}\,.
\]
We also record the following interesting result that is parallel to the above commutation relation.
\begin{cor}\label{eqncorresiduecommuteswithregularizedintegral}
One has the following identity of operators
\[
R^{(a)}_{b}\dashint_{E_{i}}=\dashint_{E_{i}}R^{(a)}_{b}\,\quad
\text{for~distinct}~a,b,i\,,~\text{when acting on almost-elliptic functions}\,.
\]
\end{cor}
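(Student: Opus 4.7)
The plan is to prove this by the same mechanism used for the $c=0$ case of \eqref{eqntrivialcommutatorofR0} in Lemma \ref{lemcommutatorresiduec=0}, replacing ordered $A$-cycle integrals by regularized integrals and the contour-deformation input by the residue formula of Lemma \ref{lemregularizedintegralintermsofresidue}. First, I would write the regularized integral in operator form as
\[
\dashint_{E_{i}} \;=\; \sum_{r\neq i} R^{(i)}_{r}\circ D_{i}^{-1},
\]
where $D_{i}^{-1}=\partial_{\widehat{Z}_{i}}^{-1}$ is the formal antiderivative in the generator $\widehat{Z}_{i}=\widehat{Z}(z_{i}-z_{m})$ for a reference $m\neq i$ chosen outside $\{a,b\}$ whenever $n\geq 4$. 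Independence of the right-hand side on the choice of $m$ and of antiderivative is guaranteed by Remark \ref{remindependenceofantiderivatives}. With such a choice $\widehat{Z}_{i}$ does not involve $z_{a}$ or $z_{b}$, so $D_{i}^{-1}$ merely shifts the power of $\widehat{Z}_{i}$ and therefore commutes with $R^{(a)}_{b}$ on $\widehat{\mathcal{J}}_{n}$.

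Next, I would apply $R^{(a)}_{b}$ and split the resulting sum over $r$ into the cases $r\notin\{a,b\}$, $r=a$, and $r=b$. The trivial commutation \eqref{eqntrivialcommutatorofR} of Lemma \ref{lemcommutatorresidue} handles $r\notin\{a,b\}$, and the nontrivial commutators \eqref{eqncommutatorofR}, applied with the relabeling $(a,b,c)\mapsto(a,i,b)$, yield
\[
R^{(a)}_{b}R^{(i)}_{b} \;=\; R^{(i)}_{b}R^{(a)}_{b}+R^{(i)}_{b}R^{(a)}_{i},\qquad
R^{(a)}_{b}R^{(i)}_{a} \;=\; -R^{(i)}_{b}R^{(a)}_{i},
\]
so the two $R^{(i)}_{b}R^{(a)}_{i}$ contributions cancel. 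Collecting all terms and using that $R^{(a)}_{b}$ commutes with $D_{i}^{-1}$, I obtain
\[
R^{(a)}_{b}\dashint_{E_{i}}\Psi \;=\; \sum_{r\neq i,\,a} R^{(i)}_{r}\, D_{i}^{-1}\, R^{(a)}_{b}\Psi.
\]

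Finally, the function $R^{(a)}_{b}\Psi$, and hence $D_{i}^{-1}R^{(a)}_{b}\Psi$, is independent of $z_{a}$; as a function of $z_{i}$ it therefore has no pole at $z_{i}=z_{a}$, and $R^{(i)}_{a}\bigl(D_{i}^{-1}R^{(a)}_{b}\Psi\bigr)=0$. Extending the sum above to all $r\neq i$ then produces $\dashint_{E_{i}} R^{(a)}_{b}\Psi$, as desired. The main obstacle is ensuring that the two nontrivial commutations used in the middle step remain valid on $\widehat{\mathcal{J}}_{n}$ and not merely on meromorphic or quasi-elliptic functions; this is precisely the content of Lemma \ref{lemcommutatorresidue}, which reduces the almost-elliptic case to a local monomial computation via the vanishing \eqref{eqantiholomorphicresidue} of antiholomorphic residues. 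After that input, the remainder of the argument is essentially bookkeeping.
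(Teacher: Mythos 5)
Your argument is correct, and it is in fact the route the paper itself flags as ``implicitly given'' in the proof of Theorem \ref{thmHAE}: there, identity \eqref{eqnRabwabintsimplified} is exactly your computation (with an extra factor $z_{ab}$), namely that $R^{(a)}_{b}$ passes through $\sum_{r}R^{(i)}_{r}\circ\partial_{\widehat{Z}_{i}}^{-1}$ because the $r=a$ and $r=b$ terms recombine via \eqref{eqncommutatorofR} and the spurious $R^{(i)}_{b}R^{(a)}_{i}$ contributions cancel. The proof the paper actually writes out for this corollary is different: it expands $\Psi$ in powers of $(\overline{z}_i-z_i)/(\overline{\tau}-\tau)$, uses the alternative expression \eqref{eqnalternativeexpressionvolumeform} of $\dashint_{E_i}$ as an $A$-cycle integral plus residue corrections, and then invokes Lemma \ref{lemcommutatorresiduec=0} with $c=0$ to commute $R^{(a)}_{b}$ past $\int_{A_i}$. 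That version buys a proof that does not rely on the explicit residue formula of Lemma \ref{lemregularizedintegralintermsofresidue} at all (see the paper's footnote), which matters for the logical organization there; your version is more direct and stays entirely inside the residue calculus, at the cost of the bookkeeping with the reference index $m$.

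One point you should tighten: your requirement $m\notin\{a,b\}$ is impossible when $n=3$, and you only cover ``$n\geq 4$''. The fix is that you only ever need $m\neq a$: the residue $R^{(a)}_{b}$ is taken in the variable $z_a$ and is linear over functions not depending on $z_a$, so it commutes with multiplication by (and anti-differentiation in) $\widehat{Z}(z_i-z_m)$ as long as $z_a$ does not appear, i.e.\ $m\neq a$. Taking $m=b$ handles $n=3$ and makes the argument uniform. With that adjustment, and the appeal to Remark \ref{remindependenceofantiderivatives} for independence of the choice of $m$ and of antiderivative, your proof is complete.
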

\begin{proof}
One proof is already implicitly given when proving \eqref{eqnRabwabintsimplified} in Theorem \ref{thmHAE} using the residue formula for regularized integrals.
An alternative proof using the relation between regularized integrals and ordered $A$-cycle integrals is given as follows.\footnote{Lemma \ref{lempartialregularizedintegral}
can also be proved based on  \eqref{eqnalternativeexpressionvolumeform}  by using the residue calculations detailed in
Lemma \ref{lemhollimitintermsofforest} below. Together with the  reasoning here that proves  \eqref{eqnRabwabintsimplified},
the derivation of  Theorem \ref{thmHAE} actually does not rely on the explicit residue formulas (e.g., Lemma \ref{lemregularizedintegralintermsofresidue})
for regularized integrals, but only on the relation \eqref{eqnalternativeexpressionvolumeform} between regularized integrals and
ordered $A$-cycle integrals.}
For any almost-elliptic function $\Psi$,
we write
\[
\Psi=\sum_{k} \Psi_{k} ({\overline{z}_i-{z}_i\over \overline{\tau}-\tau})^k\,,
\]
where $\Psi_{k}$ is a quasi-elliptic function in $z_{i}$.
Then
using the expression of $\mathrm{vol}$ given in
Remark \ref{remvolumeformforquasiness} and the Stokes theorem (see \cite[Lemma 3.26]{Li:2020regularized}), we have
\begin{eqnarray}\label{eqnalternativeexpressionvolumeform}
\dashint_{E_{i}}\Psi=\sum_{k} {1\over k+1}\int_{A} \Psi_{k}
+2\pi i \sum_{k}{1\over k+1}\sum_{r} R^{(i)}_{r}  \left(({\overline{z}_i-{z}_i\over \overline{\tau}-\tau})^{k+1} \Psi_{k}\right)
\end{eqnarray}
By \eqref{eqnalternativeexpressionvolumeform}, linearity and Lemma \ref{lemcommutatorresiduec=0} with $c=0$, it suffices to prove the following identity
\[
R^{(a)}_{b}\sum_{r:\,r\neq i}R^{(i)}_{r} \left( (\overline{z}_{i}-z_{i})^{k+1} \Psi_k\right)
=\sum_{r:\,r\neq i,a}R^{(i)}_{r} \left( (\overline{z}_{i}-z_{i})^{k+1} R^{(a)}_{b} \Psi_k\right)\,,
\]
where $\Psi_k$ is quasi-elliptic in $z_{i}$.
This follows from Lemma \ref{lemcommutatorresidue}
that is derived by local computations and is valid
when acting on $(\overline{z}_{i}-z_{i})^{k+1} \Psi_k$.
%%%%
%%%
\iffalse
By the local calculation that yields \eqref{eqntrivialcommutatorofR} in Lemma \ref{lemcommutatorresidue}, this  in turn is equivalent to proving
\[
R^{(a)}_{b}(R^{(i)}_{a}+R^{(i)}_{b}) \left( (\overline{z}_{i}-z_{i})^{k+1} \Psi_k\right)
=R^{(i)}_{b} \left( (\overline{z}_{i}-z_{i})^{k+1} R^{(a)}_{b} \Psi_k\right)=R^{(i)}_{b}  R^{(a)}_{b}  \left((\overline{z}_{i}-z_{i})^{k+1} \Psi_k\right)\,.
\]
This follows from the local calculation that proves \eqref{eqncommutatorofR}.\fi
%%%%
%%%
\end{proof}

\subsection{Summation over planar rooted labeled forests}
\label{seclabeledrootedforests}

The sequence $r$
in Proposition \ref{propresidueformulaforregularizedintegral}, Proposition \ref{propresidueformulasforAcycleintegral}
 corresponds to rooted, labeled forest, as has been discussed in \cite{Li:2020regularized}.
 We now elaborate on this.
For each sequence $r$ we associate  a map
\begin{equation}\label{eqnsequencerforstandardordering}
\mathbold{r}:[n-1]\rightarrow [n]\,,\quad\quad
\mathbold{r}(j)=r_{j}\,, ~j\in [n-1]\,.
\end{equation}
satisfying
\begin{equation}\label{eqnconditinforsequencerforstandardordering}
j<\mathbold{r}(j)\,,\quad
j\in [n-1]\,.
\end{equation}
The data \eqref{eqnsequencerforstandardordering} gives rise to a rooted, labeled forest,
with the vertices labeled by $\mathbold{r}^{-1}(n)$ designated as the roots.
Define an equivalence relation $\sim $ on $[n-1]$ by declaring
\[
j\sim \mathbold{r}(j)\,,\quad j\in [n-1]\,.
\]
Then $[n-1]$ is decomposed into several disjoint subsets
\begin{equation}\label{eqndecompositionintotrees}
[n-1]=\bigsqcup_{k\in \mathbold{r}^{-1}(n)} \Gamma_{k}\,.
\end{equation}
We call $\mathbold{r}|_{\Gamma_{k}}$ a tree in the forest $\mathbold{r}$.
\\

Consider the blackboard orientation in which vertices that have valency zero or one
 are ordered from right to left, vertices  within a path in a tree are ordered from bottom to the root in the top.
Due to the constraint \eqref{eqnconditinforsequencerforstandardordering}, each sequence $\mathbold{r}$ above gives a  planar/embdedded,  rooted, labelled forest (called forest for short), whose labelings are
compatible with the blackboard orientation in the sense that with respect to $<$
\begin{itemize}
\item labelings for the roots $\mathbold{r}^{-1}(n)$ increases from right to left;
\item within a rooted tree, labelings increases from from bottom to top (towards the root);
\item within a rooted tree, labelings of children of the same vertex increase from right to left.
\end{itemize}

By Lemma \ref{lemcommutatorresidue}, when acting on almost-elliptic functions
%Lemma \ref{lemcommutatorresiduec=0},
the operator $R^{[n-1]}_{\mathbold{r}}=
R^{(n-1)}_{\mathbold{r}(n-1)}
\cdots R^{(2)}_{\mathbold{r}(2)}
R^{(1)}_{\mathbold{r}(1)}$ can be factored into a product of residue operators associated to
the trees
\begin{equation}\label{eqnresidueoperatordecomposition}
R^{[n-1]}_{\mathbold{r}}=\prod_{k}R_{\Gamma_{k}}\,.
\end{equation}
Similarly, the integration operator \eqref{eqnFrformula} factor into a product of commuting integration operators.
The condition \eqref{eqnconditinforsequencerforstandardordering} or equivalently the blackboard orientation records the operator orderings in the residue  and integration operators within each $\Gamma_{k}$ (permutations of different trees and of the leaves within the trees are not allowed by the embedding data).
In particular, when acting on an elliptic function $\Phi$ one has
 \begin{equation}\label{eqnregularizedintegralassumoverrootedtreesunderblackboardorientation}
\dashint_{E_{n}}\dashint_{E_{[n-1]}}\Phi=\sum_{\mathbold{r}}R^{[n-1]}_{\mathbold{r}}(\Phi F_{\mathbold{r}})
=\sum_{\Gamma=(\Gamma_{k})}^{<} \left(\prod_{k} R_{\Gamma_{k}}(\Phi \prod_{k}F_{\Gamma_k})\right)
\,,
\end{equation}
where the summation $\sum^{<}$ on the right hand side is over forests whose labelings are compatible with the blackboard orientation, with respect to $<$.

\begin{rem}
A rooted  labeled  forest $\mathbold{r}$ above
gives rise to a rooted labeled  tree obtained by joining all the roots in the rooted forest to the vertex $n$ now designated as the new root.
The joint tree
corresponds to an element in the basis of the cohomology group $H^{n-1}(\mathrm{Conf}_{n}(\mathbb{C}),\mathbb{C})$
whose rank is the signed first Stirling number $(-1)^{n-1}s(n,1)=(n-1)!$.
The relation
$R^{(j)}_{n}R^{(i)}_{j}=-R^{(i)}_{n}R^{(j)}_{i}$ and
 the Jacobi identity
\begin{eqnarray*}
[[R^{(a)}_{n},R^{(b)}_{n} ],R^{(c)}_{n}]+[[R^{(b)}_{n},R^{(c)}_{n} ],R^{(a)}_{n}]+[[R^{(c)}_{n},R^{(a)}_{n} ],R^{(b)}_{n}]=0\,.
\end{eqnarray*}
from Lemma  \ref{eqncommutatorofR0}  then correspond to the anti-symmetry and Arnold relation (assembling the same form as Lemma  \ref{eqncommutatorofR0} (iii)) in the cohomology ring $H^{*}(\mathrm{Conf}_{n}(\mathbb{C}),\mathbb{C})$.
See \cite{Totaro:1996, Getzler:1999} for details.
% Here we are looking at a slightly finer decomposition than the one therein.
It seems that our decomposition here is related to the mixed Hodge structures on $H^{*}(\mathrm{Conf}_{n}(E),\mathbb{C})$.
We hope to discuss the algebraic formulation of regularized integrals using the language of mixed Hodge structures in a future investigation, following the lines in \cite{Totaro:1996, Getzler:1999}.
\end{rem}

There is in fact a bijection between the set of planar, rooted, labeled forests that are compatible with blackboard orientation
and $\mathfrak{S}_{[n-1]}$ given as follows.
\begin{itemize}

\item
For each $\phi\in \mathfrak{S}_{[n-1]}$, consider its one-line notation $\phi=\phi_1 \phi_2\cdots \phi_{n-1}$. Read from left to right, the maxima correspond to roots of the trees.
From the remaining entries of $\phi$,
$\phi_{a}$ is a child of $\phi_{b}$ if $\phi_{b}$ is the rightmost entry that precedes $\phi_{a}$ and is larger than $\phi_{a}$.
This gives a rooted, labeled forest and thus a unique planar one that is compatible with backboard orientation.
\item
For each planar, rooted, labeled forest compatible with backboard orientation, one
starts at the vertex $n$ of the forest and then traverses the forest
clockwise. Reading off each label as it is reached, this yields an element in $\mathfrak{S}_{[n-1]}$ in its one-line notation.

\end{itemize}
Using the above correspondence,  \eqref{eqnregularizedintegralassumoverrootedtreesunderblackboardorientation} can be alternatively written as
 \begin{equation}\label{eqnregularizedintegralassumoverSn-1}
\dashint_{E_{n}}\dashint_{E_{[n-1]}}\Phi=\sum_{\phi\in \mathfrak{S}_{[n-1]}}R^{[n-1]}_{\mathbold{r}(\phi)}(\Phi F_{\mathbold{r}(\phi)})
=\sum_{\phi\in \mathfrak{S}_{[n-1]}} \left(R_{\Gamma(\phi)}(\Phi F_{\Gamma(\phi)})\right)
\,,
\end{equation}
where $\mathbold{r}(\phi)$ and equivalently $\Gamma(\phi)$ is the forest compatible with blackboard orientation that is determined by $\phi$.

A different ordering $\prec$ is determined by $\sigma\in \mathfrak{S}_{[n-1]}$
via the integration region $A_{\sigma([n-1])}$ or $E_{\sigma([n-1])}$.
The ordering $\prec$ is related to the
magnitude ordering $<:1<2<\cdots < n-1$ by the action by $\sigma$
\begin{equation}\label{eqnlinearlorderingcorrespondingtopermutation}
\sigma_{1}\prec \sigma_{2}\prec\cdots \prec \sigma_{n-1}\,.
\end{equation}
Correspondingly, the map $\mathbold{r}$ is changed to $\mathbold{r}':\sigma([n-1])=[n-1]\rightarrow [n]$ with the convention $\sigma(n)=n$. It satisfies
\begin{equation}\label{eqnconditinforsequencerforsigmaordering}
\sigma_{i}\prec \mathbold{r}'(\sigma_{i})
\end{equation}
instead of \eqref{eqnconditinforsequencerforstandardordering}.
The set of such functions $\mathbold{r}'$ is in one-to-one correspondence with the set of the functions
$\sigma(\mathbold{r}):=\sigma\circ \mathbold{r}\circ \sigma^{-1}$, with the convention $\sigma(n)=n$.
Correspondingly, we have
\begin{equation}\label{eqnsigmaregularizedintegralassumoverr}
\dashint_{E_{n}}\dashint_{E_{\sigma([n-1])}}\Phi=
\sum_{\mathbold{r}'}
R^{\sigma([n-1])}_{\mathbold{r}'\circ \sigma}(\Phi\cdot F^{\sigma([n-1])}_{\mathbold{r}'\circ \sigma([n-1])})
=\sum_{\mathbold{r}}
R^{\sigma([n-1])}_{\sigma\circ\mathbold{r}}(\Phi\cdot F^{\sigma([n-1])}_{\sigma\circ\mathbold{r}})\,,
\end{equation}
with $\mathbold{r}$ satisfying \eqref{eqnsequencerforstandardordering}, \eqref{eqnconditinforsequencerforstandardordering}.

Similar to \eqref{eqnregularizedintegralassumoverrootedtreesunderblackboardorientation}, \eqref{eqnsigmaregularizedintegralassumoverr} is a sum over the collection of embedded, rooted, labeled forests in \eqref{eqnregularizedintegralassumoverrootedtreesunderblackboardorientation},
with the new labelings obtained by applying $\sigma$ on the original ones.
Now the new labelings are compatible with the blackboard orientation not with respect to $<$ but with respect to $\prec$ in \eqref{eqnlinearlorderingcorrespondingtopermutation}.
See Figure \ref{figure:regularizedintegralassumovergraph} below for an illustration.
	\begin{figure}[h]
		\centering
		\includegraphics[scale=0.6]{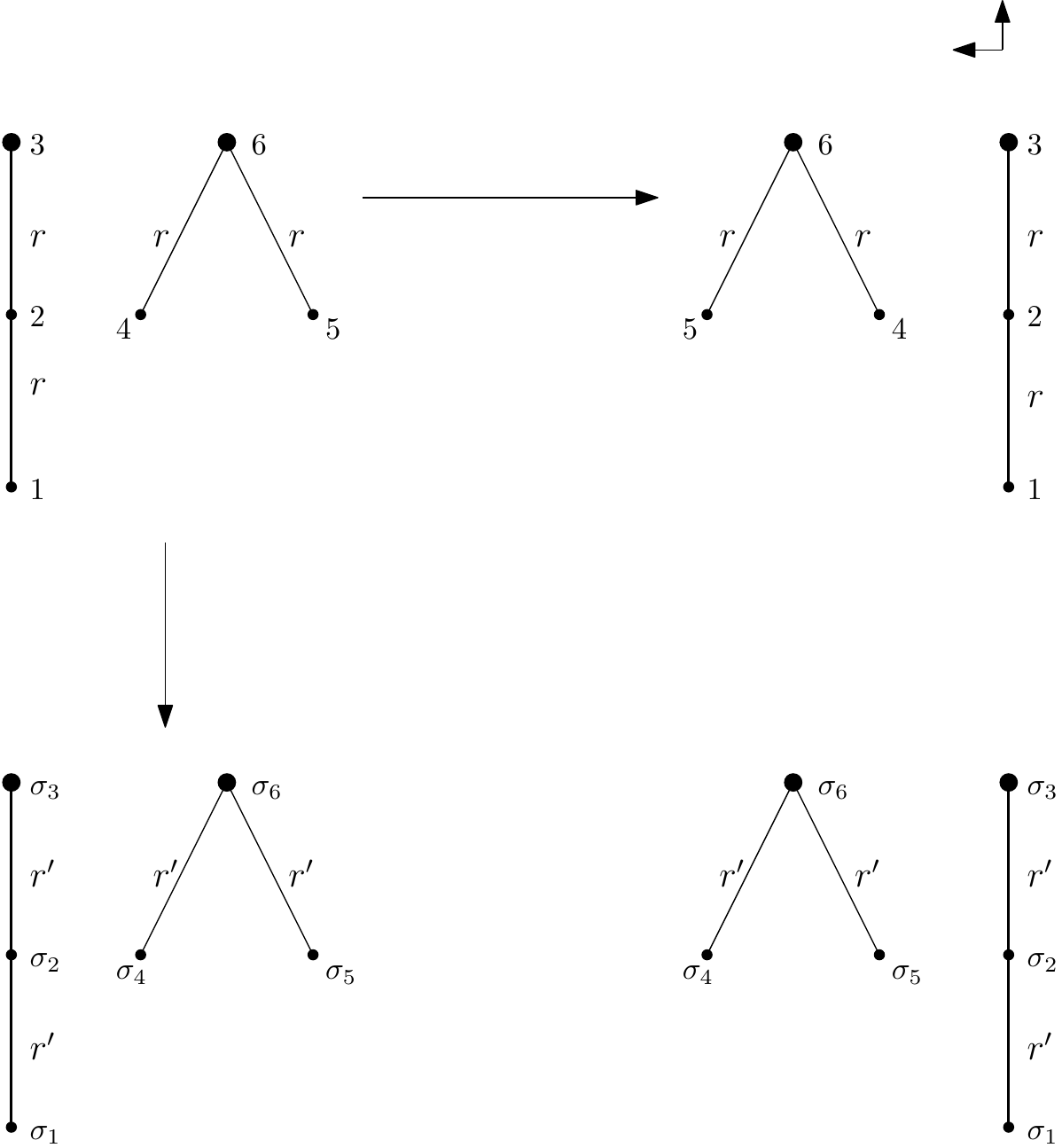}
		\caption{From left to right: picking representatives compatible with blackboard orientation. From top to bottom: applying
permutation $\sigma$ to obtain labeled rooted forests in the residue formulas of regularized integrals, with the map $\mathbold{r}'$ given by $\sigma\circ\mathbold{r}\circ \sigma^{-1}$. }
		\label{figure:regularizedintegralassumovergraph}
	\end{figure}

By the independence on the ordering of the regularized integral, we have
 \begin{equation}\label{eqnindependenceofregularizedintegralonordering}
\dashint_{E_{[n]}} \Phi={1\over (n-1)!}\dashint_{E_{n}}\sum_{\sigma\in \mathfrak{S}_{[n-1]}}\dashint_{E_{\sigma([n-1])}}\Phi\,.
\end{equation}
Now summing over the orderings $\prec$ in
\eqref{eqnlinearlorderingcorrespondingtopermutation}  is equivalent to summing over the
relabelings of the embedded, labeled rooted forests in \eqref{eqnregularizedintegralassumoverrootedtreesunderblackboardorientation}, with the operator orderings for the residue and integration operators given by the blackboard orientation.
It follows that
 \begin{equation}\label{eqnsumoverrelabelingsolabeledrootedforests}
\dashint_{E_{[n]}} \Phi
={1\over (n-1)!}
\dashint_{E_{n}}
\sum_{\Gamma=(\Gamma_{k})}^{<}
\sum_{\sigma\in\mathfrak{S}_{[n-1]}} \left(\prod_{k} R_{\sigma\circ \Gamma_{k}}(\Phi \prod_{k}F_{\sigma\circ \Gamma_k})\right)\,,
\end{equation}
where $\sigma\circ \Gamma_{k}$ is the tree whose labelings are obtained from the $\sigma$-action on
those in $\Gamma_{k}$.
Denote the set of labelings of $\Gamma_{k}$ by $\rho_{k}\subseteq [n-1]$, then the summation
can be further expressed as a sum
\[
\sum_{\Gamma=(\Gamma_{k})}^{<}
\sum_{\sigma\in\mathfrak{S}_{[n-1]}} =
\sum_{[\Gamma]=([\Gamma_{k}])}
\sum_{\rho=(\rho_{k})\in \Gamma_{[n-1]}}\sum_{\tau\in \prod_{k}\mathfrak{S}_{\rho_{k}}}\,,
\]
where $[\Gamma]$ is the forest without the labelings and $\Gamma_{[n-1]}$
is the set of compositions of $[n-1]$.
Therefore, we obtain
\begin{equation}\label{eqnsumovergammarhosigma}
\dashint_{E_{[n]}} \Phi
={1\over (n-1)!}
\dashint_{E_{n}}
\sum_{[\Gamma]=([\Gamma_{k}])}
\sum_{\rho=(\rho_{k})\in \Gamma_{[n-1]}}\sum_{\tau\in \prod_{k}\mathfrak{S}_{\rho_{k}}} R_{([\Gamma],\rho,\tau)}(\Phi \prod_{k}F_{([\Gamma],\rho,\tau)})\,.
\end{equation}
One can also exchange the order of the summation by summing over $\rho$ in the outmost one.
Then the summation is related to summation over different types of forests with fixed number of vertices in each of the trees.

\begin{ex} Consider the $n=3$ case.
The two forests in \eqref{eqnregularizedintegralassumoverrootedtreesunderblackboardorientation}
are given by the top left two in Figure \ref{figure:regularizedintegralassumovergraphn3}  below.
	\begin{figure}[h]
		\centering
		\includegraphics[scale=0.6]{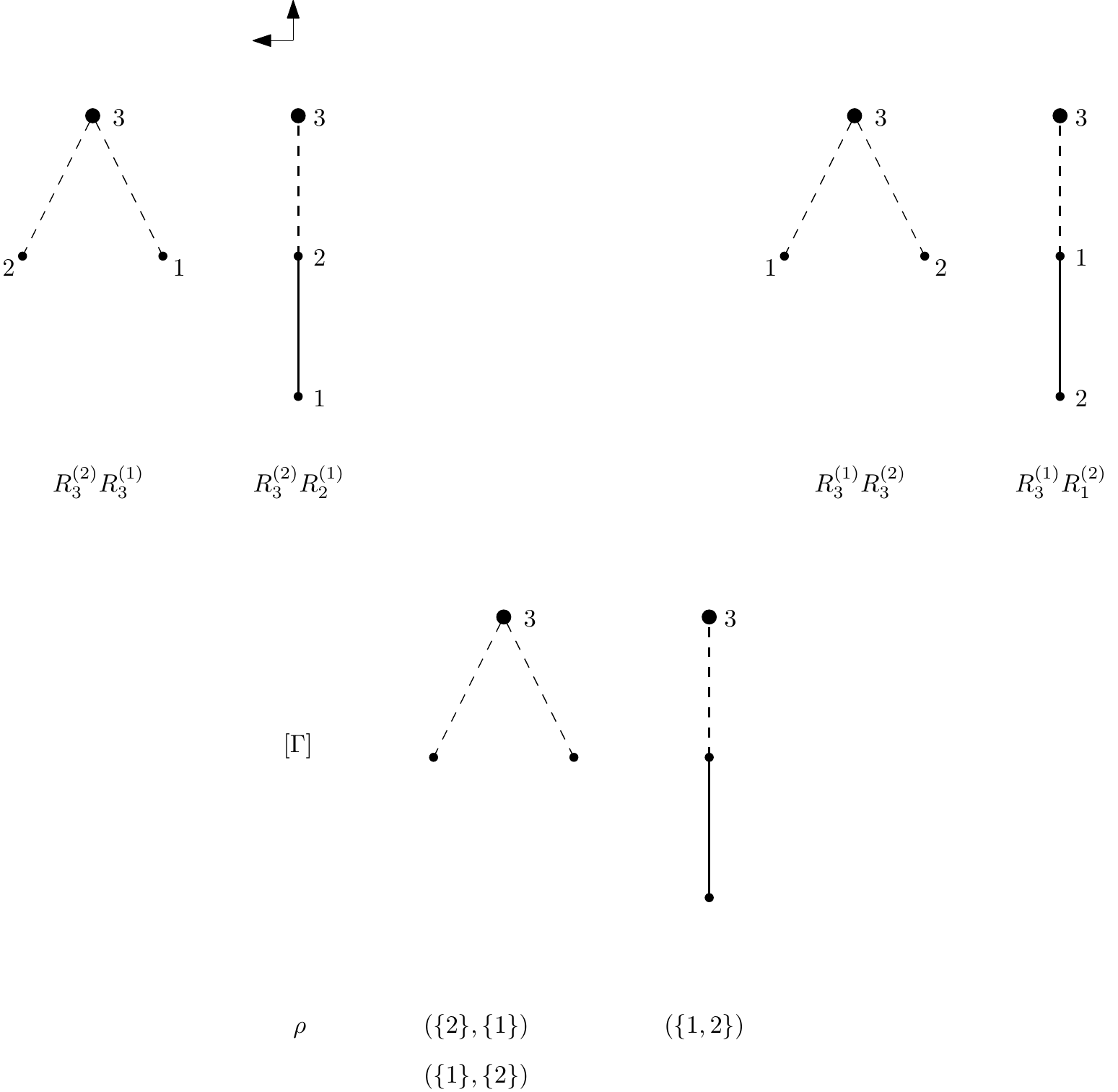}
		\caption{The left two forests correspond to summands in $\int_{E_3}\int_{E_2}\int_{E_1}$,
the right two correspond to the summands in $\int_{E_3}\int_{E_1}\int_{E_2}$. }
		\label{figure:regularizedintegralassumovergraphn3}
	\end{figure}
In this case, $\mathfrak{S}_{n-1}$ consists of two elements $\sigma=(1)(2), (12)$.
Hence the forests in \eqref{eqnsumoverrelabelingsolabeledrootedforests}
are obtained by including two more forests indicated in the top right two in
 Figure \ref{figure:regularizedintegralassumovergraphn3}.
The resummation in \eqref{eqnsumovergammarhosigma} is illustrated in the data in the bottom.
\end{ex}

Similar results hold for ordered $A$-cycle integrals, except now one does not have
\eqref{eqnindependenceofregularizedintegralonordering} for a general elliptic function $\Phi$.

%\begin{rem}\label{remorderedAcycleintegralsoverchainI}
The structures in \eqref{eqnregularizedintegralassumoverrootedtreesunderblackboardorientation} and \eqref{eqnsumovergammarhosigma} elucidate many
properties of regularized integrals and ordered $A$-cycle integrals.
For example, Corollary \ref{corinductiveresidueformulaforergularizedintegral} follows directly from
this and the choice $F_{I}=\widehat{Z}_{i_{1}}^{m}/m!$ given in \eqref{eqnFIsimpleintegralformula} for a non-recurring sequence $I=(i_{1},\cdots, i_{m}),i_{m+1}=n$.
%%%%%
%%%%
\iffalse
In the case of ordered $A$-cycle integrals, we take the $\Delta$-primitive $G_{I}$ to be
\[
G_{I}={(Z_{i_{1}}+\mathrm{Des}(I))_{m}\over m!}\,,
\]
instead of the obvious one $G_{I}=(Z_{i_{1}})_{m}/m!$ that only depends on the head and the length of $I$.
Let $[\Gamma]$ be the rooted tree given by a chain. Let $\rho=\{i_1,\cdots, i_{m}\},i_{m+1}=n$, and $I_0$ be the sequence formed from $\rho$ arranged according to the order $<$.
Let $\Gamma_0$ the rooted labeled tree corresponding to the sequence $I_0$ and similar $\tau(\Gamma_0)$ the one corresponding to
$\tau(\Gamma_0)$ for $\tau\in \mathfrak{S}_{\rho}$.
Then by the averaging argument as in Proposition \ref{propweightofoperators} and Worpitzky identity, one has
\[
{1\over m!}
\sum_{\tau\in \mathfrak{S}_{\rho}} R_{\Gamma_0} (\Phi G_{\tau(\Gamma_0)})
= {1\over m!} R_{\Gamma_0} (\Phi \sum_{\tau\in \mathfrak{S}_{\rho}}G_{\tau(\Gamma_0)})
= R_{\Gamma_0} (\Phi {Z_{i_{1}}^{m}\over m!})\,.
\]
Summing over $\mathfrak{S}_{m}$ gives \cite[Proposition 10]{Oberdieck:2018}
by Worpitzky identity.
\fi
%%%%%
%\end{rem}

\subsection{Holomorphic limit of regularized integrals}
\label{secholomorphimilitofregularized}

In  Proposition \ref{propweightofoperators} (ii), firstly established by \cite[Theorem 3.4 (2)]{Li:2020regularized},
we
used  Proposition \ref{propresidueformulasforAcycleintegral} (iii) proved by
Oberdieck-Pixton in \cite[Proposition 10]{Oberdieck:2018}.
We now give a   proof of Proposition \ref{propweightofoperators} (ii) that is simpler than  the one in  \cite{Li:2020regularized}, following the residue operator approach.
According to Remark \ref{remequivalence}, this also provides a different proof of Proposition \ref{propresidueformulasforAcycleintegral} (iii).

\begin{lem}\cite[Lemma 3.26, Lemma 3.37]{Li:2020regularized}\label{lemhollimitintermsofforest}
As operators acting on elliptic functions in
${\mathcal{J}}_{n}$, one has
\begin{equation}\label{eqnholomorphiclimitregularizedintegral}
\lim_{\mathbold{Y}=0}
\dashint_{E_{[n]}}
=\int_{A_{[n]}}\circ \lim_{\mathbold{Y}=0}+\sum_{ J:
\,J\neq\emptyset} (2\pi i)^{|J|}  \sum_{\mathfrak{r}_{J}} C_{J,\mathfrak{r}_{J}} \cdot \left(\int_{A_{[n]-J}}
R^{J}_{\mathfrak{r}_{J}}\right)^{\prec}\circ\lim_{\mathbold{Y}=0}~\,,
\end{equation}
where
\begin{itemize}
\item
$\prec$ is the prescribed ordering $<$ by magnitude as
in $\int_{A_{[n]}}$.
\item
$J=(j_1,\cdots, j_\ell)$ is an non-recurring sequence: $j_1\prec j_2\prec\cdots \prec j_\ell$ with cardinality $|J|=\ell$.
\item
$\mathfrak{r}_{J}=(\mathfrak{r}_{j_1},\cdots, \mathfrak{r}_{j_\ell})$ satisfies
%\begin{equation}\label{eqnconstraint1appendix}
$j_a\prec \mathfrak{r}_{j_{a}}\,, \mathfrak{r}_{j_{a}}\in [n]$ for
$a=1,2,\cdots,\ell$.
%\end{equation}
\item
$
C_{J,\mathfrak{r}_{J}}=\int_{0}^{1}dx_{\mathfrak{r}_{j_{\ell}}}\int_{0}^{x_{\mathfrak{r}_{j_{\ell}}}}dx_{j_{\ell}}\cdots \int_{0}^{
x_{\mathfrak{r}_{j_{1}}}}dx_{j_{1}}
$.
\item
$
R^{J}_{\mathfrak{r}_{J}}=R^{(j_\ell)}_{\mathfrak{r}_{j_\ell}}\circ\cdots \circ R^{(j_1)}_{\mathfrak{r}_{j_1}}$
and the notation
$ \left(\int_{A_{[n]-J}}
R^{J}_{\mathfrak{r}_{J}}\right)^{\prec}$ stands for the operator arranged according the ordering $\prec$.
\end{itemize}
\end{lem}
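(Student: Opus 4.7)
My plan is to iterate the alternative volume-form expansion
\[
\dashint_{E_i}\Psi = \sum_k \frac{1}{k+1}\int_{A_i}\Psi_k + 2\pi i \sum_k \frac{1}{k+1}\sum_r R^{(i)}_r\bigl(u_i^{k+1}\Psi_k\bigr)
\]
from \eqref{eqnalternativeexpressionvolumeform}, where $u_i := (\bar{z}_i - z_i)/(\bar{\tau}-\tau)$ and $\Psi = \sum_k \Psi_k u_i^k$ is the decomposition of the current integrand into pieces quasi-elliptic in $z_i$. This identity, coming from the Hodge-theoretic representation $\mathrm{vol} = dz\wedge du$, splits each regularized integral into an $A$-cycle branch and residue branches. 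The crucial transport property $u_i|_{z_i=z_r} = u_r$ moves the $u$-weight from $u_i$ to $u_r$ through every residue extraction, and the parametrization $z = t + u\tau$ identifies $u\in[0,1]$ as the vertical coordinate on the $A$-cycle representative.

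Applying this successively to $\dashint_{E_i}$ for $i = 1, 2, \ldots, n$ on an elliptic $\Phi$: at each step, the $A$-cycle branch removes $z_i$ and contributes $1/(k+1)$ (morally $\int_0^1 u^k\, du$), while the residue branch $R^{(i)}_r$ contributes $2\pi i/(k+1)$ and elevates the weight to $u_r^{k+1}$. After $n$ steps, each resulting term is indexed by the subset $J\subseteq [n]$ of steps where the residue branch was taken together with a target map $\mathfrak{r}\colon J \to [n]$; since residues only land on later-indexed variables, $j_a \prec \mathfrak{r}_{j_a}$ holds automatically. The operator acting on $\Phi$ is then exactly $\bigl(\int_{A_{[n]-J}} R^J_{\mathfrak{r}_J}\bigr)^{\prec}$ in the natural ordering.

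The numerical coefficient for each such term is $(2\pi i)^{|J|}$ times a product of $1/(k+1)$-factors accumulated along chains $j_{a_1} \to \mathfrak{r}_{j_{a_1}} = j_{a_2} \to \cdots \to s \in [n]-J$ in the data $(J, \mathfrak{r}_J)$, with branching whenever several sources share a common target. I would identify this product with $C_{J, \mathfrak{r}_J}$ by induction on the longest chain length in $\mathfrak{r}_J$, using the identity $\int_0^x t^k\, dt = x^{k+1}/(k+1)$: the nested-simplex definition of $C_{J, \mathfrak{r}_J}$ precisely encodes how each propagation of a $u^{k+1}$-weight one more step produces an additional $1/(k+2)$ factor. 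The sanity checks for simple chains (yielding $1/(|J|+1)!$) and parallel pairs (yielding $1/3$ in the smallest branched case) confirm the matching. Finally, since the operators $\int_{A_i}$ and $R^{(a)}_b$ act on the $\mathbold{Y}$-independent $\Phi$ and all numerical coefficients are $\mathbold{Y}$-independent, the holomorphic limit $\lim_{\mathbold{Y}=0}$ passes through trivially, yielding the claimed identity.

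The main obstacle is the combinatorial coefficient identification in the third paragraph --- tracking how $u$-powers propagate through possibly branched residue chains and verifying their aggregate weight against the nested integral $C_{J, \mathfrak{r}_J}$. This bookkeeping is the technical heart of the proof and parallels the forest-sum structure developed in Section \ref{seclabeledrootedforests}; a recursive argument peeling off the outermost variable $x_{\mathfrak{r}_{j_\ell}}$ in $C_{J, \mathfrak{r}_J}$ and matching it against the outermost iteration step $\dashint_{E_n}$ should suffice.
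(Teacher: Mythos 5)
Your route is the paper's own: iterate the identity \eqref{eqnalternativeexpressionvolumeform} coming from $\mathrm{vol}=dz\wedge d\bigl(\tfrac{i}{2}\tfrac{\bar z-z}{\mathrm{im}\,\tau}\bigr)$, record at each step whether the $A$-cycle branch or a residue branch is taken, and match the accumulated $1/(k+1)$-factors against the nested simplex integral $C_{J,\mathfrak{r}_J}$. The coefficient bookkeeping and your sanity checks (chains giving $1/(|J|+1)!$, the branched pair giving $1/3$) are correct and agree with the tree-factorial interpretation in \cite{Li:2020regularized}.

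The one step that is not right as written is the ``transport property'' $u_i|_{z_i=z_r}=u_r$ together with the concluding claim that the holomorphic limit ``passes through trivially''. A residue is not an evaluation: $R^{(i)}_r\bigl(u_i^{k+1}\Psi_k\bigr)\neq u_r^{k+1}R^{(i)}_r(\Psi_k)$ in general. One must write $u_i=u_r+\tfrac{\bar z_i-\bar z_r}{\bar\tau-\tau}+\tfrac{z_r-z_i}{\bar\tau-\tau}$ and expand the binomial: the antiholomorphic corrections are killed by \eqref{eqantiholomorphicresidue}, but the holomorphic corrections $(z_r-z_i)^{\ell}$ with $\ell\geq 1$ \emph{do} contribute to the residue --- they come weighted by $(\bar\tau-\tau)^{-\ell}$, i.e.\ by positive powers of $\mathbold{Y}$ with quasi-elliptic coefficients, and they survive until one applies $\lim_{\mathbold{Y}=0}$. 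So the holomorphic limit is not a formality; it is precisely what makes the transport exact, and without it the identity would be false, since $\dashint_{E_{[n]}}\Phi$ genuinely depends on $\mathbold{Y}$ by Theorem \ref{thmHAE}. To close the induction you also need that these discarded correction terms remain polynomials in $\mathbold{Y}$ of positive degree with quasi-elliptic coefficients under all subsequent $A$-integrations and residue extractions (Proposition \ref{propresidueformulasforAcycleintegral}(i) and the locality of the residue operators suffice), so that they still vanish in the limit after all $n$ steps. With that repair your argument coincides with the paper's sketch, which defers the remaining bookkeeping to \cite[Lemmas 3.26, 3.37]{Li:2020regularized}.
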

\begin{proof}
We only sketch the proof.
For any almost-elliptic function $\Psi$,
$
\Psi=\sum_{k} \Psi_{k} ({\overline{z}_i-{z}_i\over \overline{\tau}-\tau})^k\,,
$
continuing with \eqref{eqnalternativeexpressionvolumeform}, one has
\begin{eqnarray*}
&&\dashint_{E_{i}}\Psi\\
&=&\sum_{k} {1\over k+1}\int_{A} \Psi_{k}
+2\pi i \sum_{k}{1\over k+1}\sum_{r} R^{(i)}_{r}  \left(({\overline{z}_i-{z}_i\over \overline{\tau}-\tau})^{k+1} \Psi_{k}\right)\\
&=& \sum_{k} {1\over k+1}\int_{A} \Psi_{k}
+2\pi i \sum_{k}{1\over k+1}\sum_{r} R^{(i)}_{r}  \left(({\overline{z}_i-\overline{z}_r+\overline{z}_r-z_{r}+z_{r}-z_{i}\over \overline{\tau}-\tau})^{k+1} \Psi_{k}\right)\nonumber\\
&=&
\sum_{k} {1\over k+1}\int_{A} \Psi_{k}
+2\pi i \sum_{k}{1\over k+1}\sum_{\ell:\, 0\leq \ell\leq k+1}{k+1\choose \ell} ({\overline{z}_r-z_{r}\over \overline{\tau}-\tau})^{k+1-\ell}({1\over \overline{\tau}-\tau})^{\ell}  \sum_{r}R^{(i)}_{r}  \left(
(z_{r}-z_{i})^{\ell}
\Psi_{k}\right)\nonumber\,.
\end{eqnarray*}
Here in the last equality we have used $\eqref{eqantiholomorphicresidue}$.
%%%%%
\iffalse
Using the expression of $\mathrm{vol}$ given in
Remark \ref{remvolumeformforquasiness} and the Stokes theorem (see \cite[Lemma 3.26]{Li:2020regularized}), we have
\begin{eqnarray*}
&&\dashint_{E} \Phi=\int_{A}\Phi+2\pi i \,\mathrm{Res}({\overline{z}-z\over \overline{\tau}-\tau} \Phi)\\
&=&\int_{A}\Phi+2\pi i \sum_{p\in D}  \mathrm{Res}_{z=p}\left( {(\bar{z}-z)-(\bar{p}-p)\over \overline{\tau}-\tau} \Phi\right)
+2\pi i \sum_{p\in D}{\overline{p}-p \over \overline{\tau}-\tau}\, \mathrm{Res}_{z=p}( \Phi)\\
&=&
\int_{A}\Phi-\mathbold{Y} \sum_{p\in D}  \mathrm{Res}_{z=p}\left( (z-p) \Phi\right)
+2\pi i \sum_{p\in D}{\overline{p}-p \over \overline{\tau}-\tau}\,  \mathrm{Res}_{z=p}( \Phi)
\,,
\end{eqnarray*}
where $D$ is the divisor of an elliptic function $\Phi\in \mathcal{J}_1$.
\fi
%%%%%
It follows that as operators on the almost-elliptic function $\Psi$ one has
\[
\lim_{\mathbold{Y}=0}
\dashint_{E} \sum_{k} \Psi_{k} ({\overline{z}_i-{z}_i\over \overline{\tau}-\tau})^k= \int_{A}\sum_{k} {1\over k+1}\Psi_{k}+2\pi i \lim_{\mathbold{Y}=0} \sum_{r} \sum_{k} ({\overline{z}_{r}-z_{r} \over \overline{\tau}-\tau})^{k+1}\,R^{(i)}_{r} (\Psi_{k})\,.
\]
%Here we have used the fact that
%\[
%({\overline{z}-z\over \overline{\tau}-\tau})^{k}|_{A_{+}}=1\,,\quad ({\overline{z}-z\over \overline{\tau}-\tau})^{k}|_{A_{-}}=0\,.
%\]
%The operators $ \int_{A}$ and $\mathrm{Res}$ preserve meromorphicity and respect the pole structure (see Proposition \ref{propresidueformulasforAcycleintegral} (i)).
One then expands iterated regularized integrals of
almost-elliptic functions
in terms of iterated $A$-cycle integrals and residues
(see \cite[Lemma 3.26]{Li:2020regularized}).
Applying the above reasoning  to get rid of terms that do not contribute in the holomorphic limit,
the desired claim then follows,
similar to the proof of Proposition \ref{propresidueformulaforregularizedintegral}.
Note that there is no term with $J=[n]$ by the above constraint on $J,\mathfrak{r}_{J}$, as it should be the case
according to the behavior of the pole structure under residue operations.
\end{proof}

\begin{rem}
When performing the operations in Lemma \ref{lemhollimitintermsofforest}, we do not use $z_{n}$ as the reference as in Proposition \ref{propresidueformulaforregularizedintegral}, so that
 the $\mathfrak{S}_{n}$-symmetry among all the indices in $[n]$ is retained.
Of course the same relation holds if we do apply the referencing.
\end{rem}

Before proceeding to give another proof of Proposition \ref{propweightofoperators} (ii), we introduce some notations.
Similar to the discussions in Appendix \ref{seclabeledrootedforests},
for any pair $(J,\mathfrak{r}_{J})$ in  Lemma \ref{lemhollimitintermsofforest}, we
define a map
\begin{equation}\label{eqndfnforest}
\mathbold{r}:[n]\rightarrow [n]\cup\{0\}\,,\quad\quad
\mathbold{r}(j)=\mathfrak{r}_{j}\,, ~j\in J\subseteq [n]\,,\quad
\mathbold{r}(k)=0\,,~ k\in [n]-J\,.
\end{equation}
It is clear that the data $(J,\mathfrak{r}_{J})$ is equivalent to a map $\mathbold{r}$ in \eqref{eqndfnforest}
satisfying $j\prec \mathbold{r}(j)$ for any $j\in \mathbold{r}^{-1}([n])$.
In what follows we shall not distinguish them.
Denote
\begin{equation}\label{eqndfnresidueoperator}
\mathcal{R}_{\mathbold{r}}:=  \left(\int_{A_{[n]-J}}
R^{J}_{\mathfrak{r}_{J}}\right)^{\prec}=
R^{(n)}_{\mathbold{r}(n)}
\cdots R^{(2)}_{\mathbold{r}(2)}
R^{(1)}_{\mathbold{r}(1)}\,.
\end{equation}
Again we call the data $\mathbold{r}$ in \eqref{eqndfnforest} associated to $(J,\mathfrak{r}_{J})$  in Lemma \ref{lemhollimitintermsofforest} a forest, and any element in $[n]$ a vertex.
By defining an equivalence relation $\sim $ on $[n]$ by declaring
\[
j\sim \mathbold{r}(j)\,,\quad j\in J\,,
\]
the set $[n]$ is decomposed into several disjoint subsets called trees\footnote{
The combinatorial data of $\mathbold{r}$
as a labeled rooted  forest $F$ is discussed in \cite{Li:2020regularized},  with the root labeled by $0$. The number $C_{J,\mathfrak{r}_{J}}$ is
the quantity $p(F)$ in \cite[Lemma 3.36]{Li:2020regularized}, called tree factorial in \cite{Kreimer:2000}.
% It is related to the ``leading term" of $F_{\mathbold{r}}$ in \eqref{eqnFrformula} obtained by setting all $\widehat{Z}_{k,r_{k}}$'s to zero.
}
\begin{equation}\label{eqndecompositionintotrees}
[n]=\bigsqcup_{k\in \mathbold{r}^{-1}(0)} \Gamma_{k}\,.
\end{equation}
%Any vertex $a\in [n]$ with $\mathbold{r}(a)=0$ is the root of a tree.
It is clear  that the index set $\mathbold{r}^{-1}(0)$
satisfies $|\mathbold{r}^{-1}(0)|=1$
if and only if $|\mathfrak{r}_{J}-J|=1$, in which case the forest itself is a tree.\\

%%%%
%%%%%
\iffalse
For any $b\in\mathbold{r}(J)$,
consider the set $\mathrm{Des}(b)$ of its 'descendants' $a\in J$ satisfying the property that there exists a sequence $a=a_1\prec\cdots\prec a_{\ell}$
 of elements in $[J]$
with
\[
\mathbold{r}(a_1)=a_2\,, \quad \mathbold{r}(a_2)=a_3\,,\quad\cdots\quad\,, ~\mathbold{r}(a_\ell)=b\,.
\]
The descendant number $d_{b}\geq 1$  of the vertex $b$ is defined to be the cardinality of $\mathrm{Des}(b)$.
The descendant number of any vertex in $[n]- \mathbold{r}(J)$ is defined to be $0$.
We can similarly define the notion of ancestor in the obvious way.
\fi
%%%%%
%%%%

The following result Lemma \ref{lemresidueoperatorsassociatedtoforests} relates the residue operators $\mathcal{R}_{\mathbold{r}} $ associated  to trees and forests
to elements in the free algebra generated by the operator variables $R^{(i)}_{0},i\in [n]$.
%To make this paper self-contained,
%we provide their constructive proofs at the end of this section.

\begin{lem}\label{lemresidueoperatorsassociatedtoforests}
Let the notations be as above.
\begin{enumerate}[(i).]
\item
(Residue operator associated to a tree)\label{lemnestedcommutator}
Assume $2\leq m\leq n$.
Let $\mathfrak{r}=(\mathfrak{r}_{1},\cdots, \mathfrak{r}_{m-1})$
be a sequence satisfying
\[
\mathfrak{r}_{a}\in [m]\,,~ \mathfrak{r}_{m-1}=m\,,\quad\quad a<\mathfrak{r}_{a}\,,~\text{for}~a=1,2,\cdots, m-1\,.
\]
Then for any $c\in [n]\cup\{0\}-[m]$, the operator on quasi-elliptic functions
\[
R^{(m)}_{c}\circ
R^{[m-1]}_{\mathfrak{r}}
:=R^{(m)}_{c}\circ
R^{(m-1)}_{m}\circ \cdots R^{(2)}_{\mathfrak{r}_{2}}\circ R^{(1)}_{\mathfrak{r}_{1}}
\]
is sum of nested commutators of $m$ residue operators of the form $R^{(a)}_{c}, a\in [m] $.

\item (Residue operator associated to a forest)\label{lemLieelement}
The operator
$\mathcal{R}_{\mathbold{r}}$ in \eqref{eqndfnresidueoperator} is
a product $L_{v}\cdots L_{1}$ of $v=|\mathbold{r}^{-1}(0)|$ factors $L_{k},k=1,2,\cdots, v$,
with the factors being the operators associated to the trees in the decomposition \eqref{eqndecompositionintotrees}.
In particular, each $L_{k}$ is
 either of the form $R^{(i)}_{0},i\in [n]$ or a sum of nested commutators
of $m_k$ such operators, where $m_k$ is the number of vertices  of the tree corresponding to $L_{k}$.\footnote{Namely, it is a  homogeneous Lie element in the free Lie algebra generated  by the set of operator variables $R^{(a)}_0, a\in [n]$.}
\end{enumerate}
\end{lem}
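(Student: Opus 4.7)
The plan is to establish (i) by induction on $m$, and then deduce (ii) by factoring $\mathcal{R}_{\mathbold{r}}$ into tree-contributions via the trivial commutation relations of Lemma \ref{lemcommutatorresiduec=0} (ii).

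For (i), the base case $m=2$ is immediate: $R^{(2)}_c R^{(1)}_2=[R^{(1)}_c,R^{(2)}_c]$ by \eqref{eqncommutatorofR0}. For the inductive step $m\ge 3$, I would exploit the constraint $\mathfrak{r}_{m-1}=m$ to rewrite the leftmost two factors as $R^{(m)}_c R^{(m-1)}_m=[R^{(m-1)}_c,R^{(m)}_c]$ via \eqref{eqncommutatorofR0}. One then commutes the resulting $R^{(m-1)}_c$ and $R^{(m)}_c$ through the remaining operators $R^{(m-2)}_{\mathfrak{r}_{m-2}}\cdots R^{(1)}_{\mathfrak{r}_1}$: by \eqref{eqntrivialcommutatorofR0} most pairings commute trivially, while every non-trivial encounter produces, via another application of \eqref{eqncommutatorofR0}, either a fresh $R^{(\star)}_c$-factor (absorbing a tree-edge) or an operator on a reduced tree to which the inductive hypothesis applies. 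As sanity checks, the chain $\mathfrak{r}_1=2,\mathfrak{r}_2=3$ collapses to $[[R^{(1)}_c,R^{(2)}_c],R^{(3)}_c]$, while the ``Y'' $\mathfrak{r}_1=\mathfrak{r}_2=3$ collapses to $[R^{(2)}_c,[R^{(1)}_c,R^{(3)}_c]]$, confirming the Lie-element structure in small cases.

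For (ii), fix the tree decomposition $[n]=\bigsqcup_{k\in\mathbold{r}^{-1}(0)}\Gamma_k$ from \eqref{eqndecompositionintotrees}. For each $\Gamma_k$ let $L_k$ denote the sub-composition of $\mathcal{R}_{\mathbold{r}}$ consisting of the operators $R^{(j)}_{\mathbold{r}(j)}$ with $j\in\Gamma_k$, taken in order of increasing $j$. Since distinct trees have disjoint source indices and all of their targets stay within the same tree, \eqref{eqntrivialcommutatorofR0} shows that operators from different trees commute, so $\mathcal{R}_{\mathbold{r}}=L_v\cdots L_1$ for any chosen ordering of the trees. For a single-vertex tree $\Gamma_k=\{k\}$ we have $L_k=R^{(k)}_0$; otherwise, after an order-preserving relabeling of $\Gamma_k$ as $\{1,\ldots,m_k\}$ sending the root $k$ to $m_k$, the operator $L_k$ takes precisely the form treated in (i) with $c=0$, so by (i) it is a sum of nested commutators of $R^{(a)}_0$ with $a\in\Gamma_k$.

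The main obstacle lies in the inductive step of (i) when the root $m$ has multiple children, that is, when several indices $j<m-1$ satisfy $\mathfrak{r}_j=m$: these $R^{(j)}_m$ factors are scattered among operators belonging to the distinct sub-trees rooted at those children, and one application of \eqref{eqncommutatorofR0} does not separate them cleanly. I expect the resolution to be a secondary induction on the number of children of $m$: at each step one absorbs the current edge-to-root via \eqref{eqncommutatorofR0}, uses \eqref{eqntrivialcommutatorofR0} to push through the unaffected sub-trees, and is left with a tree whose root has strictly fewer children, reducing to the primary induction.
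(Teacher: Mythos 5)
Your proposal rests on exactly the two commutation identities \eqref{eqncommutatorofR0} and \eqref{eqntrivialcommutatorofR0} that the paper's (omitted) argument invokes, and your part (ii) matches the paper's reduction of forests to trees; the difference is in how the induction for part (i) is organized. The paper first proves the ``star'' case
$R^{(b)}_{c}R^{(i_k)}_{b}\cdots R^{(i_1)}_{b}=[R^{(i_k)}_{c},[R^{(i_{k-1})}_{c},[\cdots,[R^{(i_1)}_{c},R^{(b)}_{c}]\cdots]]]$
(a one-parameter induction on $k$ that disposes of the multiple-children issue once and for all) and then inducts on the height of the tree, whereas you induct on the number of vertices $m$ by peeling off the edge from $m-1$ to the root. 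Your flagged ``main obstacle'' is in fact not one, and no secondary induction on the number of children is needed: let $V$ be the vertex set of the subtree rooted at $m-1$; since $c\notin[m]$, every operator with both indices in $V$ commutes by \eqref{eqntrivialcommutatorofR0} with every operator whose indices lie in $[m]\setminus V$, so after the single application of \eqref{eqncommutatorofR0} to $R^{(m)}_{c}R^{(m-1)}_{m}$ the whole expression factors as $T_{A}T_{B}-T_{B}T_{A}=[T_{A},T_{B}]$, where $T_{A}$ is the tree operator for $V$ rooted at $m-1$ with external target $c$ and $T_{B}$ is the tree operator for $[m]\setminus V$ rooted at $m$; both are strictly smaller trees satisfying the hypotheses of (i) (the second-largest vertex of each is necessarily a child of its root), so your primary induction closes and $[T_{A},T_{B}]$ is a homogeneous Lie element. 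One genuine slip in part (ii): the factorization does \emph{not} hold ``for any chosen ordering of the trees.'' Two root operators $R^{(k)}_{0}=\int_{A}dz_{k}$ and $R^{(k')}_{0}$ are ordered $A$-cycle integrals and need not commute; \eqref{eqntrivialcommutatorofR0} explicitly excludes the case where both targets are $0$. Since each tree contributes exactly one root operator and all other cross-tree swaps are covered by \eqref{eqntrivialcommutatorofR0}, the factorization does hold for the ordering of the $L_{k}$ induced by the original order of the roots, which is all the lemma asserts.
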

 The constructive proof of  Lemma \ref{lemresidueoperatorsassociatedtoforests} is essentially contained in
 \cite{Li:2020regularized}.
 To prove Lemma \ref{lemresidueoperatorsassociatedtoforests} \eqref{lemnestedcommutator}, we first show the statement for operators of the form $R^{(b)}_{c}\prod_{i}R^{(i)}_{b}$ using Lemma \ref{lemcommutatorresiduec=0},
  then use
 induction on the height (i.e., the smallest number $h$ such that $\mathbold{r}^{h}(j)=0$) of the vertices $j\in [n]$.
 After that, one applies Lemma \ref{lemcommutatorresiduec=0} to move the residue operators $R^{(a)}_{b},b\neq 0$ across
 the $R^{(e)}_{0}$ operators and reduce  the discussions on forests  in Lemma   \ref{lemresidueoperatorsassociatedtoforests} \eqref{lemLieelement} to
 those on trees in Lemma \ref{lemresidueoperatorsassociatedtoforests} \eqref{lemnestedcommutator}.
 The details are
 elementary but tedious, and  are therefore omitted.

\begin{rem}

We can in fact show that
the operator $R^{(m)}_{c}\circ
R^{[m-1]}_{\mathfrak{r}}$ in  Lemma \ref{lemresidueoperatorsassociatedtoforests}  \eqref{lemnestedcommutator}
and each $L_{k}$ in
Lemma  \ref{lemresidueoperatorsassociatedtoforests} \eqref{lemLieelement}, provided that the total number $m_{k}$ of vertices in the corresponding tree is at least $2$,
can be expressed as a nested commutator instead of a sum of nested commutators. A graphic rule, which is essentially given by the blackboard orientation, can be found in \cite[Section 3.4]{Li:2020regularized}.

Furthermore, using Lemma \ref{lemcommutatorresiduec=0} one can show that the compositions of residue operators $\mathcal{R}_{\mathbold{r}}$
can be conveniently described using the gluing of labeled rooted  forests in the indicated orderings.
Lemma \ref{lemcommutatorresiduec=0}
can then be interpreted as saying that the
algebra generated by the residue operators $R^{(a)}_{b},a\in [n],b\in [n]\cup\{0\}$ is acted on by the underlying operad of labeled rooted  forests.

\end{rem}

\begin{proof}[Another proof of Proposition \ref{propweightofoperators} (ii)]

%The identity \eqref{eqnholomorphiclimitregularizedintegral}
%holds for any other ordering $\prec$ of elements in $\{1,2,\cdots, n\}$
%obtained from the action of $\sigma\in\mathfrak{S}_{n}$ on the previously prescribed one $<$ in $[n]$.
Consider the average of \eqref{eqnholomorphiclimitregularizedintegral}
over $\sigma\in \mathfrak{S}_{n}$.
Observe that $C_{J,\mathfrak{r}_{J}}$ is independent of  the ordering of
elements in the underlying set of $J\cup\mathfrak{r}_{J}$, as can be seen from its expression.
Thanks to this property,
to prove the desired statement Proposition \ref{propweightofoperators} (ii) it is enough
to show that\footnote{This is essentially \cite[Lemma 3.40]{Li:2020regularized}. The following argument  %about anti-symmetry in the vertical direction of the blackboard orientation
 also gives a shorter proof of
\cite[Lemma 3.42]{Li:2020regularized}.}
\begin{equation}\label{eqnaverageleadstocancellation}
\sum_{\sigma\in \mathfrak{S}_{n}}\mathcal{R}_{\sigma(\mathbold{r})}=0\,,\quad ~\text{if}\quad\mathbold{r}^{-1}([n])\neq \emptyset\,.
\end{equation}
Here as in \eqref{eqnsigmaregularizedintegralassumoverr} and \eqref{eqnsumoverrelabelingsolabeledrootedforests}, we have $\mathcal{R}_{\sigma(\mathbold{r})}=
R^{(\sigma([n-1]))}_{\sigma(\mathbold{r})(\sigma([n-1]))},\sigma(\mathbold{r})=\sigma\circ \mathbold{r}\circ\sigma^{-1}$ with the convention $\sigma(0)=0$.

%%%%
%%%%
\iffalse
\begin{rem}\label{remforestbecomeforestundersigma}
To avoid possible confusion, we remark that the requirements in summation \eqref{eqnholomorphiclimitregularizedintegral}
\[
i\prec \mathbold{r}(i)\,,\quad j\prec \mathbold{r}(j)\,,\cdots
\]
are always respected in the new ordering obtained by switching $i,j$.
This tells that in the new ordering $\prec'$ obtained from the action of $\sigma\in \mathfrak{S}_{n} $ on $\prec$,
the forest $\mathbold{r}$ under  $\prec$ that appears as a summand in $\dashint_{E_{[n]}}$
 gives a forest $\sigma(\mathbold{r})$ under  $\prec'$
and thus appears as a summand in $\dashint_{E_{\sigma([n])}}$.
\end{rem}
\fi
%%%%
%%%%
For any such $\mathbold{r}$,
applying Lemma \ref{lemresidueoperatorsassociatedtoforests} \eqref{lemLieelement}
we obtain a decomposition $\mathcal{R}_{\mathbold{r}}=L_{v}\cdots L_1$.
The constraint on $\mathbold{r}$ in Lemma  \ref{lemhollimitintermsofforest} implies that the number $v=|\mathbold{r}^{-1}(0)|$ of trees in the decomposition \eqref{eqndecompositionintotrees}
satisfies $v\leq n-1$.
This tells that
there is at least a factor above, say $L_{*}$, such that
the number $m$ of vertices of the corresponding tree $T$ is
at least $2$.
Therefore, $L_{*}$
is a sum of $m$-nested commutators.
Denote the set of vertices of the tree $T$ by $V$.
The summation over $\mathfrak{S}_{n}$ can be organized into a multi-summation similar to \eqref{eqnsumovergammarhosigma}: over the choices for the
underlying set of $V$,
over the orderings of elements
in the underlying set of $V$, and over the orderings of the rest of the elements in
$\{1,2,\cdots, n\}$.
Now to prove \eqref{eqnaverageleadstocancellation}
it suffices to show that
\begin{equation}\label{eqnaverageleadstocancellationnested}
\sum_{\sigma\in \mathfrak{S}(V)}\sigma(L_{*})=0\,,
\end{equation}
where $\sigma(L_{*})$ is the operator obtained by permuting the indices of the $m$-nested residue operators in $L_{*}$
using $\sigma$ from the permutation group $\mathfrak{S}(V)$.

Let $K$ be any
$m$-nested commutator summand of $L_{*}$.
Consider the upper-indices, say $a_1,a_2$, of an inner-most commutator of $K$.
The summation $\sum_{\sigma\in \mathfrak{S}(V)}\sigma(K)$
above
 is a multi-summation, with one of them being the sum over the
permutation group of $\{a_1,a_2\}$.
The result  is therefore zero by the anti-symmetry\footnote{
In \cite[Lemma 3.40, Lemma 3.42]{Li:2020regularized}, the vanishing is proved by using quite involved combinatorics on labeled rooted  forests.
Here the proof is simplified by using only the nested commutator structure and the resulting anti-symmetry.
} of the nested commutator $K$ in $a_1,a_2$.
By linearity, this then implies  \eqref{eqnaverageleadstocancellationnested}.
The proof is now complete.

\end{proof}

\end{appendices}

\bibliographystyle{amsalpha}

\begin{thebibliography}{BEK06}

%%%%
\iffalse
\bibitem[Arn69]{Arnol1969The}
V.~I. Arnold, \emph{The cohomology ring of the colored braid group},
  Mathematical Notes of the Academy of Ences of the Ussr \textbf{5} (1969),
  no.~2, 138--140.

\bibitem[AS93]{axelrod1993chern}
S.~Axelrod and I.~M. Singer, \emph{Chern--{S}imons {P}erturbation {T}heory
  {II}}, Journal of Differential Geometry \textbf{39} (1993),
  no.~hep-th/9304087, 173--213.

  \bibitem[AT97]{Aldrovandi:1997}
 E.~Aldrovandi and L.~Takhtajan, \emph{Generating Functional in CFT and Effective Action for Two-Dimensional Quantum Gravity on Higher Genus Riemann Surfaces}. Comm Math Phys 188, 29--67 (1997). %https://doi.org/10.1007/s002200050156
\fi
%%%%


\bibitem[BBBM17]{Boehm:2014tropical}
J.~ B{\"o}hm, K.~Bringmann, A.~Buchholz, and H.~Markwig,
  \emph{Tropical mirror symmetry for elliptic curves}, Journal f{\"u}r die
  reine und angewandte Mathematik (Crelles Journal), \textbf{732} (2017), 211--246.
%    \emph{\ \ \ \ Erratum to Tropical mirror symmetry for elliptic curves (J. reine angew. Math. 732 (2017), 211--246)}, Journal f{\"u}r die
%  reine und angewandte Mathematik (Crelles Journal), \textbf{760} (2020), 163--164.

\bibitem[BCOV94]{Bershadsky:1993cx}
M.~Bershadsky, S.~Cecotti, H.~Ooguri, and C.~Vafa, \emph{{Kodaira-Spencer
  theory of gravity and exact results for quantum string amplitudes}},
Commun. Math. Phys. \textbf{165} (1994), 311--428.


%%%%
\iffalse
\bibitem[BEK06]{Bloch:2006motives}
S.~Bloch, H.~Esnault, and D.~Kreimer, \emph{On motives
  associated to graph polynomials}, Communications in Mathematical Physics
  \textbf{267} (2006), no.~1, 181--225.

\bibitem[BK08]{Bloch:2008mixed}
S.~Bloch and D.~Kreimer, \emph{Mixed hodge structures and
  renormalization in physics}, Communications in Number Theory and Physics
  \textbf{2} (2008), no.~4, 637--718.

\bibitem[Blo07]{Bloch:2007motives}
S.~Bloch, \emph{Motives associated to graphs}, Japanese Journal of
  Mathematics \textbf{2} (2007), no.~1, 165--196.

\bibitem[Blo08]{Bloch:2008motives}
\bysame, \emph{Motives associated to sums of graphs},
The Geometry of Algebraic Cycles--Proceedings of the Conference on Algebraic Cycles, Columbus, Ohio, 2008.
Clay Mathematics Proceedings Volume \textbf{9}, 2010, 137--143.

\bibitem[{Blo}15]{Bloch:2015}
S.~{Bloch}, \emph{{Feynman Amplitudes in Mathematics and Physics}},  	arXiv:1509.00361 [math.AG].

\bibitem[BO00]{Bloch:2000}
S.~{Bloch} and A.~{Okounkov}, \emph{{The Character of the Infinite
  Wedge Representation}}, Advances in Mathematics \textbf{149} (2000), 1--60.


\bibitem[BSV20]{Benini:2020}
M.~Benini, A. ~Schenkel and B.~Vicedo, \emph{Homotopical analysis of 4d Chern-Simons theory and integrable field theories}, arXiv:2008.01829 [hep-th].


\bibitem[CK98]{Connes:1998}
A.~Connes and D.~Kreimer, \emph{{Hopf Algebras, Renormalization and Noncommutative Geometry.}}, Comm Math Phys 199, 203--242 (1998).


\bibitem[CK00]{Connes:2000renormalization}
A.~Connes and D.~Kreimer, \emph{Renormalization in quantum field theory
  and the Riemann--Hilbert problem i: The Hopf algebra structure of graphs and
  the main theorem}, Communications in Mathematical Physics \textbf{210}
  (2000), no.~1, 249--273.

\bibitem[C11]{Costello:2011book}
K.~J. Costello, \emph{{Renormalization and effective field theory}}, Mathematical Surveys and Monographs, Volume 170,
American Mathematical Society (2011).




\bibitem[CL12]{Costello:2012cy}
K.~J. Costello and S.~Li, \emph{{Quantum BCOV theory on Calabi-Yau manifolds
  and the higher genus B-model}}, arXiv:1201.4501[math.QA].

\bibitem[CMSZ20]{Chen:2020masur}
D.~Chen, M.~M{\"o}ller, A.~Sauvaget, and D.~Zagier,
  \emph{Masur--Veech volumes and intersection theory on moduli spaces of
  abelian differentials}, Inventiones mathematicae (2020).

\bibitem[CMZ18]{Chen:2018quasimodularity}
D.~Chen, M.~M{\"o}ller, and D.~Zagier, \emph{Quasimodularity and large
  genus limits of siegel-veech constants}, Journal of the American Mathematical
  Society \textbf{31} (2018), no.~4, 1059--1163.

\bibitem[Dem12]{Demailly:2012complex}
J.~P.~ Demailly, \emph{Complex analytic and differential geometry}. 2007.
\fi
%%%%%



\bibitem[Dij95]{Dijkgraaf:1995}
R.~Dijkgraaf, \emph{Mirror symmetry and elliptic curves}, The moduli space
  of curves ({T}exel {I}sland, 1994), Progr. Math., vol. \textbf{129}, Birkh\"auser
  Boston, Boston, MA, 1995, pp.~149--163.

\bibitem[Dij97]{Dijkgraaf:1997chiral}
\bysame, \emph{Chiral deformations of conformal field theories}, Nuclear
  physics B \textbf{493} (1997), no.~3, 588--612.


\bibitem[Dou95]{Douglas:1995conformal}
M.~Douglas, \emph{Conformal Field Theory Techniques in Large N Yang-Mills Theory}, In: Baulieu L., Dotsenko V., Kazakov V., Windey P. (eds) Quantum Field Theory and String Theory. NATO ASI Series (Series B: Physics), vol \textbf{328}. Springer, Boston, MA, 1995.


\bibitem[DMZ12]{Dabholkar:2012}
A.~{Dabholkar}, S.~{Murthy}, and D.~{Zagier}, \emph{{Quantum Black Holes, Wall
  Crossing, and Mock Modular Forms}}, arXiv:1208.4074 [hep-th].


%\bibitem[EO01]{Eskin:2001asymptotics}
%A.~Eskin and A.~Okounkov, \emph{Asymptotics of numbers of branched
 % coverings of a torus and volumes of moduli spaces of holomorphic
 % differentials}, Inventiones mathematicae \textbf{145} (2001), no.~1, 59--103.

%\bibitem[EO06]{Eskin:2006}
%A.~Eskin and A.~ Okounkov, \emph{ Pillowcases and quasimodular forms}. In: Ginzburg V. (eds) Algebraic Geometry and Number Theory. Progress in Mathematics, vol 253. Birkh\"auser Boston.





\bibitem[EZ85]{Eichler:1985}
M.~Eicher and D.~Zagier, \emph{The theory of Jacobi forms}, Progress in
  Math. \textbf{55}, Birkh\"auser Boston, Boston, MA, 1985.


%%%%%
\iffalse
\bibitem[FMS97]{Francesco:1997}
P.~Francesco, P.~Mathieu and
D.~S\'en\'echal, \emph{Conformal Field Theory}, Graduate Texts in Contemporary Physics, Springer-Verlag New York, 1997.








\bibitem[GH14]{Griffiths:2014principles}
P.~Griffiths and J.~Harris, \emph{Principles of algebraic geometry},
  John Wiley \& Sons, 2014.

\bibitem[GJ94]{getzler1994operads}
E.~Getzler and J.~D.~S.~ Jones, \emph{Operads, homotopy algebra and iterated
  integrals for double loop spaces}, arXiv 9403055[hep-th].
\fi
%%%%%


\bibitem[Get99]{Getzler:1999}
E. Getzler, \emph{Resolving mixed hodge modules on configuration spaces}. Duke Mathematical Journal.  \textbf{96}(1):175--203 (1999). %https://doi.org/10.1215/S0012-7094-99-09605-9



\bibitem[GM20]{Goujard:2016counting}
E.~Goujard and M.~M{\"o}ller, \emph{Counting Feynman-like graphs:
  Quasimodularity and Siegel-Veech weight}, J. Eur. Math. Soc., \textbf{22} (2020), No 2,  pp. 365--412.



%%%%
\iffalse
\bibitem[Kac98]{Kac:1998}
V.~Kac, \emph{Vertex algebras for beginners}, University
Lecture Series 10, American Mathematical Society, Providence, RI
(1998).





\bibitem[Kat76]{Katz:1976p}
N.~M.~Katz, \emph{p-adic interpolation of real analytic Eisenstein
  series}, Annals of Mathematics (1976), 459--571.



\bibitem[KNTY88]{Kawamoto:1988}
N.~Kawamoto, Y.~Namikawa, A.~Tsuchiya and Y.~Yamada \emph{Geometric realization of conformal field theory on Riemann surfaces}, Commun. Math. Phys. 116, 247--308 (1988).




\bibitem[Kon93]{Kontsevich:1993formal}
M.~Kontsevich, \emph{Formal (Non)-Commutative Symplectic Geometry},  In: Gelfand I.M., Corwin L., Lepowsky J. (eds) The Gelfand Mathematical Seminars, 1990--1992. Birkh\"auser, Boston, MA.


\bibitem[Kon94]{Kontsevich:1994feynman}
M.~Kontsevich, \emph{Feynman diagrams and low-dimensional topology}, First
  European Congress of Mathematics Paris, July 6--10, 1992, Springer, 1994,
  pp.~97--121.



\bibitem[Kon03]{kontsevich2003deformation}
\bysame, \emph{Deformation quantization of Poisson manifolds}, Letters in
  Mathematical Physics \textbf{66} (2003), no.~3, 157--216.

\fi
%%%%%

\bibitem[GL21]{Gui-Li2021}
Z.~Gui and S.~Li, \emph{{Elliptic Trace Map on Chiral Algebras}}, arXiv:2112.14572 [math.QA].



\bibitem[Kre00]{Kreimer:2000}
D.~Kreimer, \emph{Chen's Iterated Integral represents the Operator Product Expansion},
Adv. Theor. Math. Phys. \textbf{3} (2000) 627--670.




\bibitem[KZ95]{Kaneko:1995}
M.~Kaneko and D.~Zagier, \emph{A generalized {J}acobi theta function and
  quasimodular forms}, The moduli space of curves ({T}exel {I}sland, 1994),
  Progr. Math., vol. \textbf{129}, Birkh\"auser Boston, Boston, MA, 1995, pp.~165--172.



%\bibitem[Lan85]{Lang:1985}
%S.~Lang, \emph{Complex Analysis}, Graduate Texts in Mathematics, vol \textbf{103}. Springer, New York, NY, 1985.



\bibitem[Li12]{Li:2011mi}
S.~Li, \emph{{Feynman Graph Integrals and Almost Modular Forms}},
  Commun. Number Theory Phys., \textbf{6} (2012), 129--157.

%\bibitem[Li16]{li2016vertex}
%\bysame, \emph{Vertex algebras and quantum master equation},
 % arXiv:1612.01292[math.QA].

\bibitem[LZ21]{Li:2020regularized}
S.~Li and J.~Zhou, \emph{Regularized Integrals on Riemann Surfaces and Modular Forms},
Commun. Math. Phys. \textbf{388} (2021), 1403--1474.


\bibitem[{Lib}09]{Libgober:2009}
A.~{Libgober}, \emph{{Elliptic genera, real algebraic varieties and
  quasi-Jacobi forms}}, arXiv:0904.1026 [math.AG].


%%%%
\iffalse
\bibitem[Mar09]{Marcolli:2009feynman}
M.~Marcolli, \emph{Feynman integrals and motives}, European Congress of Mathematics, 2009, vol \textbf{313}, No. 1, pp.~293--332.




\bibitem[NT04]{Nikolov:2004}
N.~M.~Nikolov and I.~T. Todorov, \emph{Lectures on Elliptic Functions and Modular Forms in Conformal Field Theory}, arXiv:math-ph/0412039.

\bibitem[OP06a]{Okounkov:2006}
A. ~Okounkov and R. ~Pandharipande,
{\em Gromov-Witten theory, Hurwitz theory, and completed cycles, }
Ann. of Math. (2) 163 (2006), no. 2, 517--560.

\bibitem[OP06c]{OP-virasoro}
Okounkov, A.; Pandharipande, R.
{\em Virasoro constraints for target curves. }
Invent. Math. 163 (2006), no. 1, 47--108.
\fi
%%%%%

\bibitem[OP18]{Oberdieck:2018}
G.~Oberdieck and A.~Pixton, \emph{Holomorphic anomaly equations and the Igusa cusp form conjecture}, Invent. math. \textbf{213}, 507--587 (2018).
%https://doi.org/10.1007/s00222-018-0794-0

%\bibitem[Pix08]{Pixton:2008}
%A.~Pixton, \emph{The Gromov-Witten Theory of an Elliptic Curve and Quasimodular Forms}. Senior thesis. Princeton University. 2008.



\bibitem[Rud94]{rudd1994string}
R.~Rudd, \emph{The string partition function for QCD on the torus}, arXiv: 9407176[hep-th].

\bibitem[RY09]{Roth:2009mirror}
M.~Roth and N.~Yui, \emph{Mirror symmetry for elliptic curves: the
  B-model (bosonic) counting}, preprint.

\bibitem[RZZ19]{Ruan:2019}
Y.~Ruan, Y.~Zhang and J.~Zhou, \emph{Genus Two Quasi-Siegel Modular Forms and Gromov-Witten Theory of Toric Calabi-Yau Threefolds}, arxiv:1911.07204 [math.AG].




%%%%
\iffalse
\bibitem[RY10]{Roth:2010mirror}
\bysame, \emph{Mirror symmetry for elliptic curves: the {A}-model (fermionic)
  counting}, Clay Math. Proc \textbf{12} (2010), 245--283.


\bibitem[Siv09]{Silverman:2009}
J.~H.~Silverman, \emph{The Arithmetic of Elliptic Curves}, Graduate Texts in Mathematics, vol \textbf{106}. Springer, New York, NY, 2009.

\bibitem[Sol68]{Solomon:1968}
L.~Solomon, \emph{On the Poincar\'e-Birkhoff-Witt theorem}, Journal of Combinatorial Theory, vol \textbf{4}, issue 6, 1968, 363--375.








  \bibitem[Tak01]{Takhtajan:2001}
 L.~A.~Takhtajan, \emph{Free Bosons and Tau-Functions for Compact Riemann Surfaces and Closed Smooth Jordan Curves. Current Correlation Functions},
 Letters in Mathematical Physics \textbf{56} (2001), 181--228.




 \bibitem[Tyu78]{Tyurin:1978}
A.~Tyurin, \emph{On periods of quadratic differentials}, Russian mathematical
surveys \textbf{33} (1978), 169--221.
\fi
%%%

\bibitem[Siv09]{Silverman:2009}
J.~H.~Silverman, \emph{The Arithmetic of Elliptic Curves}, Graduate Texts in Mathematics, vol \textbf{106}. Springer, New York, NY, 2009.

\bibitem[Tot96]{Totaro:1996}B.~Totaro
{\em Configuration spaces of algebraic varieties},
Topology, Volume 35, Issue 4, October 1996, Pages 1057--1067.

\bibitem[Urb14]{Urban:2014nearly}
E.~Urban, \emph{Nearly overconvergent modular forms}, In: Bouganis T., Venjakob O. (eds) Iwasawa Theory 2012, 401--441. Contributions in Mathematical and Computational Sciences, vol \textbf{7}. Springer, Berlin, Heidelberg.



%%%%
\iffalse


\bibitem[Zag91]{Zagier:1991}
D.~Zagier, \emph{Periods of modular forms and Jacobi theta functions},
  Inventiones mathematicae \textbf{104} (1991), no.~1, 449--465.

\bibitem[Zag08]{Zagier:2008}
\bysame, \emph{Elliptic modular forms and their applications}, The 1-2-3 of
  modular forms, Universitext, Springer, Berlin, 2008, pp.~1--103.

\bibitem[Zag16]{Zagier:2016partitions}
\bysame, \emph{Partitions, quasimodular forms, and the Bloch--Okounkov
  theorem}, The Ramanujan Journal \textbf{41} (2016), no.~1-3, 345--368.

\fi
%%%%

\end{thebibliography}

\providecommand{\bysame}{\leavevmode\hbox to3em{\hrulefill}\thinspace}
\providecommand{\MR}{\relax\ifhmode\unskip\space\fi MR }
% \MRhref is called by the amsart/book/proc definition of \MR.
\providecommand{\MRhref}[2]{%
  \href{http://www.ams.org/mathscinet-getitem?mr=#1}{#2}
}
\providecommand{\href}[2]{#2}

\bigskip{}

\noindent{\small Yau Mathematical Sciences Center, Tsinghua University, Beijing 100084, P. R. China}

\noindent{\small Email: \tt  sili@mail.tsinghua.edu.cn}

%\medskip{}
%\noindent{\small Yau Mathematical Sciences Center, Tsinghua University, Beijing 100084, P. R. China}

\noindent{\small Email: \tt jzhou2018@mail.tsinghua.edu.cn}

\end{document}